\tikzset{commutative diagrams/.cd,arrow style=tikz,diagrams={>=stealth'}}
\newcounter{TmpEnumi}
\newcommand{\Z}{{\mathbb{Z}}}
\newcommand{\N}{{\mathbb{N}}}
\newcommand{\W}{{\mathcal{W}}}
\newcommand{\id}{{\mathrm{id}}}
\newcommand{\card}{{\mathrm{card}}}
\newcommand{\Aut}{{\mathrm{Aut}}}
\theoremstyle{definition}
\newtheorem{lma}{Lemma}[section]
\numberwithin{equation}{section}
\newaliascnt{thmCt}{lma}
\newtheorem{thm}[thmCt]{Theorem}
\newaliascnt{corCt}{lma}
\newtheorem{cor}[corCt]{Corollary}
\newaliascnt{propCt}{lma}
\newtheorem{prop}[propCt]{Proposition}
\newtheorem*{thm*}{Theorem}
\newtheorem*{qst*}{Question}
\newtheorem*{cor*}{Corollary}
\newtheorem*{prop*}{Proposition}
\newaliascnt{pgrCt}{lma}
\newaliascnt{dfCt}{lma}
\newtheorem{df}[dfCt]{Definition}
\newaliascnt{remCt}{lma}
\newtheorem{rem}[remCt]{Remark}
\newaliascnt{remsCt}{lma}
\newaliascnt{egCt}{lma}
\newtheorem{eg}[egCt]{Example}
\newaliascnt{egsCt}{lma}
\newaliascnt{qstCt}{lma}
\newaliascnt{pbmCt}{lma}
\newtheorem{pbm}[pbmCt]{Problem}
\newaliascnt{notaCt}{lma}
\newtheorem{nota}[notaCt]{Notation}
\begin{document}
\title[Weak tracial Rokhlin property]{The weak tracial Rokhlin property for 
finite group actions on simple C*-algebras}
\author[M. Forough]{Marzieh Forough}
\address[\textbf{Marzieh Forough}]{
Current address: Institute of mathematics, 
Czech academy of sciences, 115 67 Praha 1, Czech republic;
Previous Address: School of Mathematics,
 Institute for Research in Fundamental
 Sciences (IPM), P. O.\  Box 19395--5746, Tehran, Iran}
\email{forough@math.cas.cz, mforough86@gamil.com}
\author[N. Golestani]{Nasser Golestani}
\address[\textbf{Nasser Golestani}]{Department of Pure Mathematics, Faculty of Mathematical Sciences,
   Tarbiat Modares University, P.O. Box 14115--134,
 Tehran, Iran; School of Mathematics,
 Institute for Research in Fundamental
 Sciences (IPM), P. O.\  Box 19395--5746, Tehran, Iran}
\email{n.golestani@modares.ac.ir}
\dedicatory{}
\subjclass[2000]{Primary 46L55; Secondary 46L40, 46L05}
\keywords{}

\begin{abstract}
We develop the concept of weak tracial Rokhlin property for 
finite group actions on simple (not necessarily unital) C*-algebras
and study its properties  systematically.
In particular, we show that this property is stable under restriction to  
invariant hereditary C*-algebras,  minimal tensor products, and direct limits of actions.
Some of these results are new even in the unital case and answer
open questions asked by N.~C.~Phillips in full generality.
We present several examples of finite group actions with the 
weak tracial Rokhlin property on simple 
stably projectionless C*-algebras. 
We prove that if $\alpha \colon G \rightarrow \mathrm{Aut}(A)$ is an action of a 
finite group $G$ on a simple   C*-algebra 
$A$ with tracial rank zero 
and $\alpha$ has the weak tracial Rokhlin property, 
then the crossed product $A \rtimes _{\alpha} G$ and the fixed point algebra 
$A^{\alpha}$ are  simple  with tracial rank zero. This extends a result of 
N.~C.~Phillips to the nonunital case. 
We use the machinery of Cuntz subequivalence to work in this nonunital setting.
\end{abstract}

\maketitle
\tableofcontents

\section{Introduction}
The Rokhlin property for actions on C*-algebras appeared in 
 \cite{HO84, BSKR93, Ki95, Ki96}.
Izumi   gave a modern definition of the Rokhlin property for finite group actions on 
unital  C*-algebras \cite{IZ04a, IZ04b}. 
This property is useful to understand the structure of
the crossed product of C*-algebras and properties passing from the underlying algebra to
the crossed product \cite{OP12}.
However, actions with the Rokhlin property are rare and 
 many C*-algebras admit no finite group actions with the Rokhlin property. 
 Indeed, the Rokhlin property imposes severe K-theoretical obstructions on C*-algebras. 
   Phillips introduced  the tracial   Rokhlin property 
 for finite group actions on simple unital C*-algebras \cite{Ph11}
  with the purpose of proving that every simple higher dimensional
noncommutative torus is an AT~algebra \cite{Ph06}, and proving that certain
crossed products of such algebras by finite cyclic groups are AF~algebras
\cite{Echterhoff-Phillips}  
 (see \cite{OP06} for $\mathbb{Z}$ actions with this property).
 The tracial Rokhlin property is generic  in many cases 
 (see \cite{Ph12} and   \cite[Chapter~4]{Wang13}),
 and also can be used to study properties passing from the 
 underlying algebra to the 
 crossed product  \cite{Ph11, Echterhoff-Phillips, Archey}.
 
 Weak versions of the tracial Rokhlin property 
 in which
 one uses orthogonal positive contractions instead of 
 orthogonal projections were studied for actions
 on simple unital C*-algebras with few projections
 \cite{Ph12, MS12, Hirshberg, Wang13, GHS, Wang18}
 (see Definition~\ref{unitalgar}).
As an example, the flip action on the Jiang-Su algebra
$\mathcal{Z} \cong \mathcal{Z} \otimes \mathcal{Z}$
has the weak tracial Rokhlin property but it does not have
the tracial Rokhlin property 
\cite{Hirshberg}. 
 
The Rokhlin property was extended to the case of actions on nonunital 
C*-algebras   \cite{Nawata, Sa15, GS16},
and there are actions with the Rokhlin property on stably projectionless
C*-algebras, in particular on the Razak-Jacelon algebra $\mathcal{W}$
 \cite{Nawata}.
However,
 there has been  no work on extending the (weak) tracial Rokhlin property to 
 the simple  nonunital case.
(As far as we know, a suitable definition of the   tracial Rokhlin property for actions on
nonsimple C*-algebras is not known.)
Moreover, actions on simple nonunital C*-algebras naturally appear, for instance,
the restriction of an action on a simple unital C*-algebra to an invariant nonunital
hereditary subalgebra (see Proposition~\ref{propwtrpher}).
Also, there are many  examples of finite group actions on simple nonunital C*-algebras 
without the Rokhlin property, which have the weak tracial Rokhlin property
(see 
Example~\ref{example_all}). 
In fact, the problem of finding the right definition of 
the  tracial Rokhlin property for actions on simple nonunital
C*-algebras was asked by Phillips \cite{Ph.free}.
  This motivated us to investigate the weak tracial Rokhlin property for 
  finite group actions on simple  C*-algebras. 
  We give the following definition:
\begin{df}\label{weak.t.R}
Let $\alpha \colon G \rightarrow \Aut (A)$ be an action of a finite group $G$ on a 
simple  C*-algebra $A$. 
We say that $\alpha$ has the  \emph{weak tracial Rokhlin property} if 
for every $\varepsilon > 0$,  
every finite subset $F \subseteq A$, and all positive elements $x,y \in A$ with $\|x\|=1$, 
there exists a family of orthogonal positive contractions 
$(f_{g})_{g \in G}$ in $A$ such that, with $f=\sum_{g \in G} f_{g}$,
  the following hold:
\begin{enumerate}
\item\label{weak.t.R_it1} 
$\|f_{g}a-af_{g}\| < \varepsilon$ for all $a\in F$ and all $g\in G$;
\item\label{weak.t.R_it2}  
$\|\alpha_{g}(f_{h})-f_{gh}\| < \varepsilon$ for all $g,h\in G$;
\item\label{weak.t.R_it3} 
$(y^{2}-yfy - \varepsilon)_{+} \precsim_{A} x$;
\item\label{weak.t.R_it4} 
$\|fxf\| > 1-\varepsilon$.
\end{enumerate}
We say that  $\alpha$ has the  \emph{tracial Rokhlin property} if 
we can  arrange $(f_{g})_{g \in G}$ above
 to be mutually orthogonal projections.
\end{df}

 It turns out that 
our definition of  the tracial Rokhlin property
extends Phillips's definition of the tracial Rokhlin property \cite[Definition~1.2]{Ph11} 
to the nonunital simple case. 
We recall that in Phillips's definition of the tracial Rokhlin property 
for  finite group actions on   simple unital C*-algebras, 
Condition~\eqref{weak.t.R_it3} is formulated as follows:
\begin{enumerate}
\item[$(3)'$]\label{t.R_it3} 
$1-f \precsim_{A} x$ (or equivalently, $1-f$ is Murray von~Neumann 
equivalent to a projection in the hereditary subalgebra generated by $x$),
\end{enumerate}
where $(f_{g})_{g \in G}$ is a family of orthogonal projections in $A$.
Phillips in \cite{Ph.free} asked for a correct analogue of Condition~$(3)'$
 in the  simple nonunital case. 
Condition~\eqref{weak.t.R_it3} in
Definition~\ref{weak.t.R} contains our main idea 
for a suitable notion of the weak tracial Rokhlin property (as well as tracial Rokhlin property) in the nonunital case. 
This condition---which may seem
strange at the first glance---says that $1-f$ is small 
with respect to the Cuntz subequivalence relation.
The rationale behind this condition is that since $y\in A_{+}$ is arbitrary, we can take it to be arbitrarily
large (that is, close to $1$) and so 
$y^{2}-yfy=y(1-f)y$ is close to $1-f$. The $\varepsilon$ gap in this condition is a technical condition
 needed, for example, when applying a key lemma in the Cuntz semigroup 
(Lemma~\ref{lemkr}). 

The following result can be considered as a generalization of 
\cite[Theorem~1.9]{Ph15} to the nonunital case
(see Theorem~\ref{wtr=tr}).

\begin{thm}\label{thm_tr0_intro}
Let $A$ be a simple  C*-algebra with tracial rank zero, 
and let $\alpha \colon G \rightarrow \Aut(A)$ be an action of a finite group $G$ on $A$. 
If $\alpha$ has the weak tracial Rokhlin property 
then it has the  tracial Rokhlin property.
\end{thm}

Phillips in \cite[Problem~3.2]{Ph.free} asked whether there is a reasonable formulation 
of the tracial Rokhlin property for finite group actions on simple unital  C*-algebras
in terms of the central sequence algebra. 
We give an answer to this question in the \emph{not necessarily unital} simple case.
Indeed, it turns out that if moreover $A$ is separable and one works
with the central sequence algebra $A_{\infty}$, then Condition~\eqref{weak.t.R_it3}
can be replaced by 
$y^{2}-yfy\precsim_{A_{\infty}} x$
(see Proposition~\ref{propcs}).

We prove that an action with the weak tracial Rokhlin property 
is pointwise outer (Proposition~\ref{outer}), and hence the resulting crossed product is simple. 
Moreover, we prove several permanence properties for finite group actions with
 the weak tracial Rokhlin property on simple  C*-algebras, 
 for example,  
  passing to restriction to  
invariant hereditary C*-algebras,  minimal tensor products, and direct limits of actions. 
In particular, the following result concerning tensor products gives an affirmative answer to 
a question of Phillips
 \cite[Problem~3.18]{Ph.free} (see Theorems~\ref{proptensor} and \ref{thmtensor}).

\begin{thm}\label{thm_ex_intro} 
 Let $\alpha \colon G\to \mathrm{Aut}(A)$ and $\beta \colon G\to \mathrm{Aut}(B)$
be actions of a finite group $G$ on simple C*-algebras $A$ and $B$.
If $\alpha$ has the weak tracial Rokhlin property
then so does $\alpha \otimes \beta \colon G \to \mathrm{Aut}(A \otimes_{\min} B)$.
If $\alpha$ has the  tracial Rokhlin property
then so does $\alpha \otimes \beta \colon G \to \mathrm{Aut}(A \otimes_{\min} B)$
whenever $B^{\beta}$ has an
approximate identity (not necessarily increasing) consisting of projections. 
\end{thm}

 Phillips proved that the crossed product of a simple unital C*-algebras with tracial rank zero 
 by a finite group action with the tracial Rokhlin property, is again simple with tracial rank zero \cite{Ph11}. 
The following   theorem generalizes this result to the nonunital case
(see Theorem~\ref{thmcross}).

\begin{thm}\label{thm_cross_wtrp_intro}
Let $A$ be a simple    C*-algebra with tracial rank zero and let $\alpha$
be an action of a finite group $G$ on $A$ with the weak tracial Rokhlin property.
Then the crossed product  $A \rtimes _{\alpha} G$ and the fixed point algebra
$A^{\alpha}$ are simple C*-algebras with tracial rank zero.
\end{thm}

The preservation of some
other classes of simple C*-algebras under taking crossed products
by finite group actions with the (weak) tracial Rokhlin property
is given in \autoref{sec_cross} (and in 
\cite{AGJP17} and \cite{G19}).

To prove Theorem~\ref{thm_cross_wtrp_intro},
we need to work with simple nonunital C*-algebras with tracial rank zero.
Recall that Lin in \cite{Lin01} first gave the definition of   tracial rank for unital C*-algebras 
and then he defined the tracial rank of a nonunital C*-algebra to be the tracial  rank of its minimal 
unitization. However, working with the unitization of C*-algebras is not always convenient.  
Moreover, the unitization of a simple nonunital C*-algebra is not simple and so one can not use  
techniques which are applicable 
only to simple C*-algebras.  To deal with this difficulty, we develop an approach which 
unifies the concept of tracial rank
zero for both unital and nonunital simple C*-algebras; see Theorem~\ref{main.trk}. 
This approach helps us to study  crossed products of  simple nonunital C*-algebras with tracial rank zero 
by finite group actions with the weak tracial Rokhlin property. We also need some results
about simple nonunital C*-algebras with tracial rank zero, such as Morita invariance
and having real rank zero and stable rank one
(the last two results in the simple \emph{unital} case are proved in \cite{Lin01}). 
We did not find any reference proving these
results (in the nonunital case), however, they may be known to some researchers.
So we  prove them in \autoref{app}.




\section{Cuntz subequivalence}\label{secpre}
In this section, we recall some results on  Cuntz subequivalence 
and provide some lemmas which will be used in the subsequent sections.
We refer the reader to \cite{APT11} and \cite{Ph14} for more information
about  Cuntz subequivalence.
\begin{nota}
We use the following notation  in this paper. 
\begin{enumerate}
\item 
For a C*-algebra $A$, $A_{+}$ denotes the positive cone of $A$. Also,
$A^{+}$ denotes the forced  unitization of $A$ 
(adding a  new identity  even if  $A$  is unital),
while $A^{\sim} = A$ if $A$ is unital and
$A^{\sim} = A^{+}$ if $A$ is nonunital.
\item
If $p $ and $q$ are projections in a C*-algebra $A$, then we write 
$p\sim_{\mathrm{MvN}}q$ if $p$ is Murray-von Neumann equivalent to $q$.

\item
If $E$ and $F$ are  subsets of a C*-algebra $A$ and $\varepsilon>0$, 
then we write $E\subseteq_{\varepsilon} F$ if
for every $a\in E$ there is $b\in F$ such that $\|a-b\|<\varepsilon$.

\item
We write $\mathcal{K}=K(\ell^{2})$ and
$M_{n}=M_{n}(\mathbb{C})$.
 
 \item
 Let $A$ be a C*-algebra. For $a,b\in A_{+}$, we say that
 $a$ is \emph{Cuntz subequivalent} to
$b$ in $A$ and we write $a\precsim_{A} b$,
 if there is a sequence $(v_n)_{n \in \mathbb{N}}$ in $A$ 
 such that $\|a-v_{n}bv_{n}^{*}\|\to 0$.
We write $a\sim_{A} b$ if both $a\precsim_{A} b$ 
and $b\precsim_{A} a$. 
 If $a,b\in (A\otimes \mathcal{K})_{+}$, we write 
 $a\precsim_{A} b$ if $a$ is {Cuntz subequivalent} to
$b$ in $A\otimes \mathcal{K}$.
$[a]$ stands for the
Cuntz equivalence class of $a$.

 \item
 Let $a$ be a positive element in a C*-algebra $A$
 and let $\varepsilon>0$. Let
 $f_{\varepsilon}\colon [0,\infty) \to [0,\infty)$ be defined by
 $f_{\varepsilon}=0$ on $[0,\varepsilon]$ and
 $f_{\varepsilon}(\lambda)=\lambda-\varepsilon$ on
 $(\varepsilon,\infty)$. We denote 
 $(a-\varepsilon)_+=f_{\varepsilon}(a)$.

 \item
 We use the notation $\mathbb{Z}_{n}$ for the group
 $\mathbb{Z}/n\mathbb{Z}$. Moreover,
 $\mathbb{N}$ denotes the set of natural
 numbers not including zero.
\end{enumerate}
\end{nota}

The following key lemma  will be used several times throughout the paper.

\begin{lma}[\cite{KR02}, Lemma~2.2]\label{lemkr}
Let $A$ be a C*-algebra, let $a,b\in A_{+}$, and let $\varepsilon >0$. If
$\|a-b\|<\varepsilon$ then there is a contraction $d\in A$ such that
$(a-\varepsilon)_{+}=dbd^{*}$. In particular, $(a-\varepsilon)_{+}\precsim_{A} b$.
\end{lma}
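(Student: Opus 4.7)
The plan is a two-stage construction: first, derive an operator inequality in $A^{**}$ from the norm hypothesis to produce a Douglas-type factorization of $(a-\varepsilon)_+$ through $b$; second, use functional calculus on $a$ to descend from $A^{**}$ to $A$.

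Passing to the unitization $A^{+}$, the hypothesis $\|a-b\|<\varepsilon$ gives the self-adjoint bound $a - \varepsilon \cdot 1\leq b$. Let $p=\chi_{(\varepsilon,\infty)}(a)\in A^{**}$ be the spectral projection of $a$, which coincides with the support projection of $(a-\varepsilon)_+$. Since $p$ commutes with $a$, compression yields
\begin{equation*}
(a-\varepsilon)_+ \;=\; p(a-\varepsilon)p \;\leq\; pbp
\end{equation*}
in $A^{**}$, and the Douglas factorization lemma applied inside the von Neumann algebra $A^{**}$ then furnishes a contraction $d_{0}\in A^{**}$ with $d_{0}\,b\,d_{0}^{*}=(a-\varepsilon)_+$. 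Next, I would introduce the continuous function $r\colon[0,\infty)\to[0,1]$ defined by $r(t)=\sqrt{(t-\varepsilon)_+/t}$ for $t>0$ and $r(0)=0$, for which $r(a)^{2}a=(a-\varepsilon)_+$ via functional calculus. The element $r(a)\in A$ is a positive contraction supported spectrally on $[\varepsilon,\infty)$, and serves as a concrete substitute for the abstract projection $p\in A^{**}$. Using $r(a)$ in place of $p$ together with the approximate inverses $(b+1/n)^{-1/2}$ to compensate for the possible non-invertibility of $b$, one constructs a sequence $d_{n}\in A$ converging in norm to an element $d\in A$ satisfying $dbd^{*}=(a-\varepsilon)_+$ and $\|d\|\leq 1$.

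The main obstacle is the descent from $A^{**}$ to $A$: the partial isometry produced by the Douglas lemma and the spectral projection $p$ both typically live in $A^{**}\setminus A$, and one must verify that the resulting limit $d$ lies in $A$ and is a contraction. The delicate point is that $r$ is supported on $[\varepsilon,\infty)$, so $r(a)$ sees only the spectral part of $a$ on which the bound $\|a-b\|<\varepsilon$ controls $b$ from below effectively; combined with the approximate inverses $(b+1/n)^{-1/2}$ and a careful bookkeeping of the interaction between $(a-\varepsilon)_+^{1/2}$ and the kernel of $b$, this spectral cut-off yields norm-convergence of $\{d_{n}\}$ inside $A$. Once the exact identity $dbd^{*}=(a-\varepsilon)_+$ is in hand, the Cuntz subequivalence $(a-\varepsilon)_+\precsim_{A}b$ is immediate.
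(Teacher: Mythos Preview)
The paper does not prove this lemma; it is quoted from \cite{KR02} without argument, so there is no proof in the paper to compare against. Assessing your sketch on its own: Stage~1 is correct (Douglas in $A^{**}$ yields a contraction $d_{0}\in A^{**}$ with $d_{0}bd_{0}^{*}=(a-\varepsilon)_{+}$), but it does not feed into Stage~2, since you never say how $d_{0}$ is connected to your approximating sequence. More seriously, the natural sequence built from your ingredients, namely $d_{n}=r(a)\,a^{1/2}(b+1/n)^{-1/2}=(a-\varepsilon)_{+}^{1/2}(b+1/n)^{-1/2}$, is not even bounded in general. In $M_{2}$ take $a=2\varepsilon\,e_{1}e_{1}^{*}$ and $b=2\varepsilon\,vv^{*}$ with $v=(\cos\theta)e_{1}+(\sin\theta)e_{2}$ and $0<\theta<\pi/6$: then $\|a-b\|=2\varepsilon|\sin\theta|<\varepsilon$, yet $(b+1/n)^{-1/2}$ has eigenvalue $\sqrt{n}$ along $v^{\perp}$, and since $\langle e_{1},v^{\perp}\rangle=-\sin\theta\neq 0$ one computes $\|d_{n}\|\sim\sqrt{\varepsilon n}\,|\sin\theta|\to\infty$. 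Your cut-off $r(a)$ cannot prevent this, because it commutes with $a$, not with $b$.

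The underlying problem is that your construction never exploits the \emph{strict} inequality $\|a-b\|<\varepsilon$. The remedy (and this is how \cite{KR02} proceeds) is to fix $\varepsilon_{0}\in(\|a-b\|,\varepsilon)$ and replace your $r$ by $h(t)=\sqrt{(t-\varepsilon)_{+}/(t-\varepsilon_{0})}$, still a positive contraction since $\varepsilon_{0}<\varepsilon$. Functional calculus then gives the exact identity $h(a)(a-\varepsilon_{0})h(a)=(a-\varepsilon)_{+}$ in $A^{\sim}$, and combining this with $b\geq a-\varepsilon_{0}$ yields $(a-\varepsilon)_{+}\leq h(a)\,b\,h(a)$ \emph{inside $A$} rather than merely in $A^{**}$. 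With $x=h(a)bh(a)$ the approximants $(a-\varepsilon)_{+}^{1/2}\bigl(x+1/n\bigr)^{-1/2}$ are now genuine contractions in $A^{\sim}$ (because $(a-\varepsilon)_{+}\leq x+1/n$, which is exactly what failed for $b+1/n$), and the factorization through $b$ can be extracted from this. The detour through $A^{**}$ is unnecessary.
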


In the preceding lemma, if instead of $\|a-b\|<\varepsilon$ we assume
that $\|a-b\|\leq\varepsilon$, then again we get $(a-\varepsilon)_{+}\precsim_{A} b$.
In fact, for any $\delta>0$ we have $\|a-b\|< \varepsilon + \delta$ and so
$(a-\varepsilon-\delta)_{+}\precsim_{A} b$. Letting $\delta \to 0$, we get
$(a-\varepsilon)_{+}\precsim_{A} b$.

We need the following lemma   to work with Condition~\eqref{w.t.R.p_it3} 
in Definition~\ref{w.t.R.p}.

\begin{lma}[\cite{AGJP17}]\label{lemmkey}
Let $A$ be a C*-algebra, let $x\in A $ be a nonzero element, and let $b\in A_{+}$. Then for any $\varepsilon>0$,
\[
(xbx^{*}-\varepsilon)_{+}\precsim_{A} x \left(b- {\varepsilon}/{\|x\|^{2}}\right)_{+}x^{*}.
\]
In particular, if $\|x\|\leq 1$ then
$(xbx^{*}-\varepsilon)_{+}\precsim_{A} x (b-\varepsilon)_{+}x^{*}\precsim_{A} (b-\varepsilon)_{+}$.
\end{lma}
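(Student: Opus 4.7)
The plan is to reduce the claim to the Kirchberg--R\o rdam perturbation lemma (\autoref{lemkr}) by showing that $xbx^{*}$ is close in norm to $x(b-\varepsilon/\|x\|^{2})_{+}x^{*}$.

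First I would set $\delta = \varepsilon/\|x\|^{2}$ and observe that, as continuous functions on $[0,\infty)$, one has $t = (t-\delta)_{+} + g(t)$ where $g(t) = \min\{t,\delta\}$ satisfies $0 \le g(t) \le \delta$. Applying functional calculus to $b$ gives $b = (b-\delta)_{+} + g(b)$ with $\|g(b)\|\le \delta$, and hence
\[
\bigl\|xbx^{*} - x(b-\delta)_{+}x^{*}\bigr\| = \|xg(b)x^{*}\| \le \|x\|^{2}\,\|g(b)\| \le \|x\|^{2}\delta = \varepsilon.
\]
Then \autoref{lemkr} (in the non-strict form discussed immediately after its statement, where the hypothesis is only $\|a-b\|\le\varepsilon$) yields
\[
\bigl(xbx^{*}-\varepsilon\bigr)_{+} \precsim_{A} x(b-\delta)_{+}x^{*},
\]
which is the desired inequality.

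For the ``in particular'' statement, when $\|x\|\le 1$ I would simply repeat the argument above with $\delta=\varepsilon$: the same estimate gives $\|xbx^{*}-x(b-\varepsilon)_{+}x^{*}\|\le \|x\|^{2}\varepsilon \le \varepsilon$, and \autoref{lemkr} produces $(xbx^{*}-\varepsilon)_{+}\precsim_{A} x(b-\varepsilon)_{+}x^{*}$. The remaining inequality $x(b-\varepsilon)_{+}x^{*}\precsim_{A}(b-\varepsilon)_{+}$ is a standard Cuntz-semigroup fact: writing $y=(b-\varepsilon)_{+}$, one has $xyx^{*} = (xy^{1/2})(xy^{1/2})^{*} \sim_{A} (xy^{1/2})^{*}(xy^{1/2}) = y^{1/2}x^{*}xy^{1/2} \le \|x\|^{2}y$, and the last element is Cuntz equivalent to $y$.

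There is no real obstacle here; the only point that requires a little care is the boundary case in \autoref{lemkr}, where the norm estimate is an equality rather than a strict inequality. This is handled by the standard trick of replacing $\varepsilon$ with $\varepsilon + \eta$ for $\eta>0$ and letting $\eta \to 0$, which is already explicitly noted in the paragraph following \autoref{lemkr}.
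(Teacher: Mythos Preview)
Your proof is correct and follows essentially the same approach as the paper: both reduce to the norm estimate $\|xbx^{*}-x(b-\varepsilon/\|x\|^{2})_{+}x^{*}\|\le\varepsilon$ and then invoke \autoref{lemkr} in its non-strict form. The only cosmetic difference is that for the ``in particular'' clause the paper deduces $x(b-\varepsilon/\|x\|^{2})_{+}x^{*}\precsim_{A} x(b-\varepsilon)_{+}x^{*}$ from the monotonicity of cutoffs (since $\varepsilon/\|x\|^{2}\ge\varepsilon$ when $\|x\|\le1$), whereas you simply rerun the estimate with $\delta=\varepsilon$; both are fine, and your explicit justification of $xyx^{*}\precsim_{A}y$ is a nice bonus the paper omits.
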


\begin{proof}
	We have 
	\begin{align*}
	\| xbx^{*} - x \left( b -  {\varepsilon}/{\|x\|^{2}}\right)_{+} x^{*}\| \leq \|x\|^{2}\| b - 
	\left( b - {\varepsilon}/{\|x\|^{2}}\right)_{+} \| \leq \|x\|^{2}\frac{\varepsilon}{\|x\|^{2}} = \varepsilon.
	\end{align*}
Using the remark following 
Lemma~\ref{lemkr}, we get
\[
		(xbx^{*}-\varepsilon)_{+}\precsim_{A} x \left(b- {\varepsilon}/{\|x\|^{2}}\right)_{+}x^{*}.
\]
If $ \|x\|\leq 1 $ then $ \tfrac{\varepsilon}{\|x\|^{2}} \geq \varepsilon $ and so 
$(xbx^{*}-\varepsilon)_{+}\precsim_{A} x (b-\varepsilon)_{+}x^{*} \precsim_{A} (b - \varepsilon)_{+}$.
	\end{proof}

The following lemma is well-know. (It follows from
 \cite[Lemmas~2.2 and 2.4(i)]{KR02}.)

\begin{lma}\label{lembdd1}
Let $A$ be a C*-algebra, let $a,b\in A_{+}$, and let $\delta>0$. If
$a\precsim_{A} (b-\delta)_{+}$ then there exists a bounded sequence $(v_{n})$
in $A$ such that $\|a-v_{n}bv_{n}^{*}\|\to 0$. We can take this sequence such that
$\|v_{n}\|\leq \|a\|^{\frac{1}{2}}\delta^{-\frac{1}{2}}$ for every $n\in \mathbb{N}$.
\end{lma}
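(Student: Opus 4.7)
The plan is to combine the Kirchberg--R\o rdam lemma (\autoref{lemkr}) with a functional-calculus identity that absorbs the gap $\delta$ so as to produce not just an approximating sequence, but one with an explicit norm bound. First, using $a\precsim_{A}(b-\delta)_{+}$, I fix a sequence $(w_{n})$ in $A$ with $\|a-w_{n}(b-\delta)_{+}w_{n}^{*}\|<\eta_{n}$ for some chosen null sequence $\eta_{n}\downarrow 0$. Applying \autoref{lemkr} to the pair $a$ and $w_{n}(b-\delta)_{+}w_{n}^{*}$ yields contractions $d_{n}\in A^{\sim}$ with $(a-\eta_{n})_{+}=d_{n}w_{n}(b-\delta)_{+}w_{n}^{*}d_{n}^{*}$.

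The key algebraic step is the identity $(b-\delta)_{+}=p(b)\,b\,p(b)$, where $p\colon[0,\infty)\to[0,\infty)$ is the continuous function $p(t)=\sqrt{(t-\delta)_{+}/t}$ for $t>0$ and $p(0)=0$; note $p$ vanishes on $[0,\delta]$ and $\|p\|_{\infty}\leq 1$, and the identity is immediate on the spectrum. Setting $v_{n}:=d_{n}w_{n}p(b)$, we then get $v_{n}bv_{n}^{*}=(a-\eta_{n})_{+}$, which converges to $a$ in norm since $\|(a-\eta_{n})_{+}-a\|\leq \eta_{n}\to 0$.

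For the uniform bound $\|v_{n}\|\leq \|a\|^{1/2}\delta^{-1/2}$, I will exploit that $v_{n}$ is ``spectrally supported'' where $b\geq\delta$. Define $s\colon[0,\infty)\to[0,1]$ by $s(t)=\min(1,t/\delta)$. Since $p(t)s(t)=p(t)$ (because $p$ vanishes on $[0,\delta]$ and $s=1$ on $[\delta,\infty)$), one has $v_{n}s(b)=v_{n}$, hence $v_{n}s(b)^{2}v_{n}^{*}=v_{n}v_{n}^{*}$. The crucial pointwise inequality $\delta s(t)^{2}\leq t$, checked separately on $[0,\delta]$ (where it reduces to $t\leq\delta$) and on $[\delta,\infty)$ (where it reads $\delta\leq t$), gives $\delta\,s(b)^{2}\leq b$ in the positive cone and therefore
\[
\delta\|v_{n}\|^{2}=\delta\|v_{n}v_{n}^{*}\|=\|v_{n}(\delta s(b)^{2})v_{n}^{*}\|\leq \|v_{n}bv_{n}^{*}\|=\|(a-\eta_{n})_{+}\|\leq \|a\|,
\]
yielding the claimed bound. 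There is no serious obstacle; the only subtlety is the choice of the cut-off $s$ so that the constant in the norm estimate comes out sharply as $\|a\|^{1/2}\delta^{-1/2}$, which is exactly what makes the elementary inequality $\delta s(t)^{2}\leq t$ the right one to use.
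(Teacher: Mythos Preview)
Your proof is correct and follows essentially the same approach as the paper's: both obtain $w_n$ from the Cuntz subequivalence, apply \autoref{lemkr} to get $(a-\eta_n)_+=d_n w_n(b-\delta)_+ w_n^* d_n^*$, and then use the factorization $(b-\delta)_+=p(b)\,b\,p(b)$ to define $v_n$. The only difference is that the paper packages the last step (your functions $p$ and $s$ and the resulting norm estimate) as a citation to \cite[Lemma~2.4(i)]{KR02}, whereas you have spelled out that functional-calculus argument explicitly.
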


The following lemma is known and follows from \cite[Lemmas~2.18 and 2.19]{APT11}.

\begin{lma}\label{lemmvn}
Let $A$ be a C*-algebra, let $a\in A_{+}$, and let $p\in A$ be a projection. 
The following statements are equivalent:
\begin{enumerate}

\item\label{lemmvn_it1} 
$p\precsim_{A} a$;
\item\label{lemmvn_it2}  
there exists $v\in A$ such that $p=vav^{*}$;
\item\label{lemmvn_it3}  
$p\sim_{\mathrm{MvN}} q$ in $A$ for some projection $q$ in $\overline{aAa}$.

\end{enumerate}
\end{lma}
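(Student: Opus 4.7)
The plan is to establish the cyclic chain of implications \eqref{lemmvn_it1} $\Rightarrow$ \eqref{lemmvn_it2} $\Rightarrow$ \eqref{lemmvn_it3} $\Rightarrow$ \eqref{lemmvn_it1}.

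For \eqref{lemmvn_it1} $\Rightarrow$ \eqref{lemmvn_it2}, I would choose a sequence $(v_{n})$ in $A$ with $v_{n} a v_{n}^{*} \to p$ and fix $n$ large enough that $\|p - v_{n} a v_{n}^{*}\| < 1/2$. Applying \autoref{lemkr} then produces a contraction $d \in A$ satisfying $(p - 1/2)_{+} = d (v_{n} a v_{n}^{*}) d^{*} = (d v_{n}) a (d v_{n})^{*}$. Because $p$ is a projection with spectrum in $\{0,1\}$, continuous functional calculus gives $(p - 1/2)_{+} = \tfrac{1}{2} p$, so the element $v = \sqrt{2}\, d v_{n}$ witnesses $p = v a v^{*}$.

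For \eqref{lemmvn_it2} $\Rightarrow$ \eqref{lemmvn_it3}, given $p = v a v^{*}$, I would set $u = v a^{1/2}$, so that $u u^{*} = v a v^{*} = p$. A short expansion, together with $p^{2} = p$, shows that $(u - u u^{*} u)(u - u u^{*} u)^{*} = p - 2 p^{2} + p^{3} = 0$, whence $u$ is a partial isometry. Consequently $q := u^{*} u = a^{1/2} v^{*} v a^{1/2}$ is a projection lying in $a^{1/2} A a^{1/2} \subseteq \overline{a A a}$, and $p \sim_{\mathrm{MvN}} q$ via $u$.

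For \eqref{lemmvn_it3} $\Rightarrow$ \eqref{lemmvn_it1}, I would combine two standard facts: Murray--von Neumann equivalent projections are Cuntz equivalent (if $p = u^{*} u$ and $q = u u^{*}$, then $p = u^{*} q u$ already gives $p \precsim_{A} q$), and every positive element of the hereditary subalgebra $\overline{a A a}$ is Cuntz subequivalent to $a$ (approximate $q$ by elements $a y_{n} a$, apply \autoref{lemkr} to get $(q - \varepsilon)_{+} \precsim_{A} a y_{n} a$, and use $(a y_{n} a)(a y_{n} a)^{*} \leq \|y_{n} a^{2} y_{n}^{*}\|\, a^{2}$ to conclude $a y_{n} a \precsim_{A} a$). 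Together these give $p \sim_{A} q \precsim_{A} a$.

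None of the three steps is genuinely difficult; the only points requiring a little care are the identification $(p - 1/2)_{+} = \tfrac{1}{2} p$ used in the first implication and the partial-isometry identity in the second.
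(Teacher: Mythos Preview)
Your proof is correct and matches the paper's approach almost exactly: the same cycle of implications, the same application of \autoref{lemkr} together with $(p-\delta)_{+}=(1-\delta)p$ for \eqref{lemmvn_it1}$\Rightarrow$\eqref{lemmvn_it2}, and the same partial isometry $u=va^{1/2}$ (yielding $q=a^{1/2}v^{*}va^{1/2}$) for \eqref{lemmvn_it2}$\Rightarrow$\eqref{lemmvn_it3}. The paper additionally records a direct route \eqref{lemmvn_it1}$\Rightarrow$\eqref{lemmvn_it3} that bypasses \autoref{lemkr} (perturb $zaz^{*}$ to a projection in $C^{*}(zaz^{*})$ and then use \cite[Lemma~3.8]{Pas}), but this is offered only as an alternative; your argument is the paper's primary one.
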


Note that the statements in Lemma~\ref{lemmvn} are also equivalent
to $[p]\leq [a]$ in the sense of 
\cite[Definition~2.2]{Lin01}
(see \autoref{app}).

In general, there is no upper bound for the norm of $v$ in the previous lemma, unless
 there is a gap between $p$ and $a$; see the following lemma
(which may be considered as a special case of \cite[Lemma~2.4]{KR02}).

\begin{lma}\label{lembddp}
Let $A$ be a C*-algebra, let $a\in A_{+}$, let $\varepsilon>0$, and let $p\in A$ be a projection. 
If $p\precsim_{A} (a-\varepsilon)_{+}$, then
there exists $v\in A$ such that $p=vav^{*}$ and $\|v\|\leq \varepsilon^{-\frac{1}{2}}$.
\end{lma}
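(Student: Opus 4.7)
The plan is to combine \autoref{lemmvn}---which turns a Cuntz subequivalence $p \precsim_A b$ into an \emph{exact} equation $p = ubu^*$, but with no a priori norm control on $u$---with a functional-calculus trick that trades the cutoff $(a-\varepsilon)_+$ for $a$ itself, paying the price of the factor $\varepsilon^{-1/2}$.

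First I would apply \autoref{lemmvn} to the hypothesis $p \precsim_A (a-\varepsilon)_+$ to obtain $u \in A$ with $p = u(a-\varepsilon)_+ u^*$. Writing $x = u(a-\varepsilon)_+^{1/2} \in A$, one has $p = xx^*$ and $\|x\| = \|p\|^{1/2} \leq 1$ (the case $p = 0$ is trivial with $v = 0$, so I assume $p \neq 0$). Next I would choose a continuous function $h \colon [0,\infty) \to [0,\infty)$ with $\|h\|_\infty \leq \varepsilon^{-1/2}$ and $h(t)^2 t = 1$ for all $t \geq \varepsilon$; for instance, let $h$ vanish on $[0,\varepsilon/2]$, interpolate linearly to $\varepsilon^{-1/2}$ on $[\varepsilon/2,\varepsilon]$, and equal $t^{-1/2}$ on $[\varepsilon,\infty)$. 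Continuous functional calculus on $a$ then yields
\[
(a-\varepsilon)_+^{1/2}\, h(a)^2\, a \;=\; (a-\varepsilon)_+^{1/2},
\]
since the scalar functions $(t-\varepsilon)_+^{1/2} h(t)^2 t$ and $(t-\varepsilon)_+^{1/2}$ coincide on $[0,\infty)$ (both vanish on $[0,\varepsilon]$ and agree on $[\varepsilon,\infty)$); multiplying on the left by $u$ gives $x\, h(a)^2\, a = x$.

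Finally, I would set $v = xh(a) \in A$. Using that $h(a)$ and $a$ commute as elements of $C^*(a)$,
\[
vav^* \;=\; xh(a)\, a\, h(a) x^* \;=\; x\,(h(a)^2 a)\,x^* \;=\; xx^* \;=\; p,
\]
while $\|v\| \leq \|x\|\,\|h(a)\| \leq \varepsilon^{-1/2}$, as required. The only nontrivial step is the choice of $h$ and the verification of the functional-calculus identity; everything else is bookkeeping. Alternatively, the passage from the exact equation $p = u(a-\varepsilon)_+ u^*$ to $p = vav^*$ with $\|v\| \leq \varepsilon^{-1/2}$ is precisely the content of \cite[Lemma~2.4]{KR02}, and one may simply cite it.
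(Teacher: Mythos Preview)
Your proof is correct and follows exactly the paper's approach: apply \autoref{lemmvn} to obtain $p = u(a-\varepsilon)_+ u^*$, then invoke \cite[Lemma~2.4(i)]{KR02} to pass to $p = vav^*$ with the norm bound. The only difference is that you have unpacked the functional-calculus argument behind \cite[Lemma~2.4]{KR02} explicitly (and then noted the citation yourself), whereas the paper simply cites it.
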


\begin{proof}
By  Lemma~\ref{lemmvn},  there exists $w\in A$ such that 
 $p=w(a-\varepsilon)_{+}w^{*}$. Then  \cite[Lemma~2.4(i)]{KR02} implies that
 there is $v\in A$ such that $p=vav^{*}$ and $\|v\|\leq \varepsilon^{-\frac{1}{2}}$.
\end{proof}
 
Part~\eqref{lembdd_it1} of the following lemma is a variant of \cite[Lemma~2.4]{KR02}. 
We shall use this lemma in the proof of 
 Lemma~\ref{lemtrp2}.
\begin{lma}\label{lembdd}
Let $A$ be a C*-algebra, let $a,b\in A_{+}$, and let $\varepsilon>0$. 
\begin{enumerate}

\item\label{lembdd_it1}
If $a=x(b-\varepsilon)_{+}$ for some $x\in A$, then $a=yb$ for some $y\in A$ with
$\|y\|\leq \varepsilon^{-1}\|a\|$.
\item\label{lembdd_it2} 
If $a\in \overline{A(b-\varepsilon)_{+}}$ then there is a sequence 
$(v_{n})$ in $A$ such that
$\|a-v_{n}b\|\to 0$ and $\|v_{n}\|\leq \varepsilon^{-1}(\|a\|+\frac{1}{n})$ 
for all $n\in \mathbb{N}$.
\end{enumerate}
\end{lma}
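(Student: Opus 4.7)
For part \eqref{lembdd_it1}, my plan is to construct, via continuous functional calculus on $b$, a bounded ``approximate inverse'' for $b$ that recovers $(b-\varepsilon)_+$ when multiplied against it. Explicitly, I will define a continuous function $g \colon [0,\infty) \to [0,\infty)$ by $g(\lambda) = \lambda/\varepsilon^{2}$ on $[0,\varepsilon]$ and $g(\lambda) = 1/\lambda$ on $[\varepsilon, \infty)$. This function is continuous on $[0,\infty)$, satisfies $g(0)=0$, and $\|g\|_{\infty} = 1/\varepsilon$. The pointwise identity
\[
g(\lambda)\,\lambda\,(\lambda-\varepsilon)_{+} \;=\; (\lambda-\varepsilon)_{+} \qquad (\lambda \geq 0)
\]
then yields, via functional calculus applied to $b \in A_{+}$, the operator identity $(b-\varepsilon)_{+} g(b)\, b = (b-\varepsilon)_{+}$. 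Because $g(0)=0$, we have $g(b) \in A$, so setting $y = a\,g(b) \in A$ gives
\[
yb \;=\; x(b-\varepsilon)_{+}\, g(b)\, b \;=\; x(b-\varepsilon)_{+} \;=\; a,
\]
and $\|y\| \leq \|a\|\cdot\|g(b)\| \leq \varepsilon^{-1}\|a\|$, as required.

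Part \eqref{lembdd_it2} will then follow by a direct approximation argument bootstrapping from \eqref{lembdd_it1}. Given $a \in \overline{A(b-\varepsilon)_{+}}$ and $n \in \mathbb{N}$, I will choose $x_{n} \in A$ with $\|x_{n}(b-\varepsilon)_{+} - a\| < 1/n$, set $a_{n} = x_{n}(b-\varepsilon)_{+}$, and observe that $\|a_{n}\| < \|a\| + 1/n$. Applying \eqref{lembdd_it1} to $a_{n}$ produces $v_{n} \in A$ with $a_{n} = v_{n} b$ and
\[
\|v_{n}\| \;\leq\; \varepsilon^{-1}\|a_{n}\| \;\leq\; \varepsilon^{-1}\bigl(\|a\| + 1/n\bigr).
\]
Since $\|a - v_{n}b\| = \|a - a_{n}\| < 1/n$, the sequence $(v_{n})$ has all the required properties.

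The only subtlety I anticipate is ensuring that the element $g(b)$ genuinely lies in $A$ rather than merely in $A^{\sim}$; this is precisely why I take the linear ramp $g(\lambda) = \lambda/\varepsilon^{2}$ on $[0,\varepsilon]$, which enforces $g(0)=0$ and simultaneously keeps $\|g\|_{\infty}$ at $1/\varepsilon$. Apart from this standard point, both parts reduce to elementary functional calculus together with the triangle inequality, so no serious obstacle is expected.
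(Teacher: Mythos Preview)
Your proof is correct and essentially identical to the paper's. Your function $g$ is exactly the paper's $g_{\varepsilon}$, and the element $y=ag(b)=x(b-\varepsilon)_{+}g(b)$ coincides with the paper's $y=xf_{\varepsilon}(b)^{2}$ (since the paper observes $f_{\varepsilon}(t)^{2}=(t-\varepsilon)_{+}g_{\varepsilon}(t)$); your norm estimate via $\|y\|\leq\|a\|\,\|g(b)\|$ is in fact slightly more direct than the paper's computation of $yy^{*}$, and part~\eqref{lembdd_it2} is argued identically.
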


\begin{proof}
We define  continuous functions $f_{\varepsilon},g_{\varepsilon}\colon [0,\infty)\to [0,\infty)$ for $\varepsilon>0$ as in \cite[Lemma~2.4]{KR02},
that is,
\[
f_{\varepsilon}(t)=
\begin{cases}
\sqrt{\dfrac{t-\varepsilon}{t}}  &t\geq \varepsilon\\
0    &t<\varepsilon
\end{cases}
\ \ \ \text{and}\ \ \ 
g_{\varepsilon}(t)=
\begin{cases}
\dfrac{1}{t}  &t\geq \varepsilon\\
\varepsilon^{-2}t    &t<\varepsilon.
\end{cases}
\]
Then $tf_{\varepsilon}(t)^{2}=(t-\varepsilon)_{+}$ and 
$f_{\varepsilon}(t)^{2}=(t-\varepsilon)_{+}g_{\varepsilon}(t)$. Thus
$bf_{\varepsilon}(b)^{2}=(b-\varepsilon)_{+}$ and 
$f_{\varepsilon}(b)^{2}=(b-\varepsilon)_{+}g_{\varepsilon}(b)$.
Note that $\|g_{\varepsilon}(b)\|\leq \varepsilon^{-1}$.

To prove \eqref{lembdd_it1}, put $y=xf_{\varepsilon}(b)^{2}$. Then
$yb= xf_{\varepsilon}(b)^{2}b=x(b-\varepsilon)_{+}=a$. Also,
\[
yy^{*}=xf_{\varepsilon}(b)^{4}x^{*}=x(b-\varepsilon)_{+}^{2}g_{\varepsilon}(b)^{2}x^{*}
\leq \|g_{\varepsilon}(b)^{2}\|x(b-\varepsilon)_{+}^{2}x^{*}\leq \varepsilon^{-2}aa^{*}.
\]
Thus $\|y\|\leq \varepsilon^{-1}\|a\|$.

For \eqref{lembdd_it2}, let $a\in \overline{A(b-\varepsilon)_{+}}$ 
and {f}ix $n\in \mathbb{N}$. Then there is $w_{n}\in A$
such that $\|a-w_{n}(b-\varepsilon)_{+}\|<\frac{1}{n}$. Put $a_{n}=w_{n}(b-\varepsilon)_{+}$.
Thus $\|a_{n}\|\leq \|a\|+\frac{1}{n}$. By \eqref{lembdd_it1} there is $v_{n}\in A$ such that
$a_{n}=v_{n}b$ and 
$\|v_{n}\|\leq \varepsilon^{-1}(\|a\|+\frac{1}{n})$. Then
$\|a-v_{n}b\|=\|a-a_{n}\|<\frac{1}{n}$, and so $\|a-v_{n}b\|\to 0$.
\end{proof}

\section{The weak tracial Rokhlin property}\label{sec_wtRp}


In this section, we define the weak tracial Rokhlin property
(as well as the tracial Rokhlin property) for 
finite group actions on simple not necessarily unital C*-algebras. 
We show that the weak 
tracial Rokhlin property implies pointwise outerness and so the resulting 
crossed product is simple. 
Then we compare it with other notions of the weak tracial Rokhlin property 
for actions on simple unital C*-algebras. 
Moreover, we show that the Rokhlin property
in the sense of \cite[Definition~3.2]{Sa15}  implies the weak tracial Rokhlin property
for actions on simple C*-algebras.

\begin{df}\label{w.t.R.p}
Let $\alpha \colon G \rightarrow \Aut (A)$ be an action of a finite group $G$ on a 
simple  C*-algebra $A$. 
We say that $\alpha$ has the  \emph{weak tracial Rokhlin property} if 
for every $\varepsilon > 0$,  
every finite subset $F \subseteq A$, and all positive elements $x,y \in A$ with $\|x\|=1$, 
there exists a family of orthogonal positive contractions 
$(f_{g})_{g \in G}$ in $A$ such that, with $f=\sum_{g \in G} f_{g}$,
  the following hold:
\begin{enumerate}
\item\label{w.t.R.p_it1}
 $\|f_{g}a-af_{g}\| < \varepsilon$ for all $a\in F$ and all $g\in G$;
\item\label{w.t.R.p_it2}
 $\|\alpha_{g}(f_{h})-f_{gh}\| < \varepsilon$ for all $g,h\in G$;
\item\label{w.t.R.p_it3}
  $(y^{2}-yfy - \varepsilon)_{+} \precsim_{A} x$;
\item\label{w.t.R.p_it4}
  $\|fxf\| > 1-\varepsilon$.
\end{enumerate}
We say that  $\alpha$ has the  \emph{tracial Rokhlin property} if 
we can  arrange $(f_{g})_{g \in G}$ above
 to be mutually orthogonal projections.
\end{df}

An action  $\alpha \colon G \rightarrow \mathrm{Aut}(A)$ is called \emph{pointwise outer}
 if for any $ g \in G\setminus \{1\}$, the automorphism $\alpha_{g}$ is outer, that is, 
 it is not of the form 
 ${\mathrm{Ad}} (u)$ 
 for any unitary $u$ in the multiplier algebra of $A$. 
 
 \begin{prop} \label{outer}
Let $\alpha$ be an action of a finite group $G$ on a simple C*-algebra $A$. 
If $\alpha$ has the weak tracial Rokhlin property then $\alpha$ is pointwise outer.
\end{prop}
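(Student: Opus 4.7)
The plan is to argue by contradiction. Suppose $\alpha_g = \mathrm{Ad}(u)$ for some $g \in G \setminus \{1\}$ and some unitary $u \in M(A)$. Since $A$ is simple and nonzero, fix a positive contraction $a_0 \in A$ with $\|a_0\| = 1$; both $a_0$ and $ua_0$ then lie in $A$. Apply \autoref{w.t.R.p} to the finite subset $F = \{a_0, ua_0\}$, with $x = y = a_0$ and a small $\varepsilon > 0$ to be chosen at the end, producing orthogonal positive contractions $(f_h)_{h \in G}$ satisfying conditions \eqref{w.t.R.p_it1}--\eqref{w.t.R.p_it4}; set $f = \sum_{h \in G} f_h$.

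The first step is to extract the bound $\|f_h u f_h\| < \varepsilon$ for every $h \in G$. From \eqref{w.t.R.p_it2}, $\|u f_h u^* - f_{gh}\| < \varepsilon$; since $g \neq 1$ forces $gh \neq h$ and hence $f_h f_{gh} = 0$, multiplying on the left by $f_h$ gives $\|f_h u f_h u^*\| < \varepsilon$. Right multiplication by the unitary $u$ preserves norm, so $\|f_h u f_h\| < \varepsilon$.

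Combining this with the approximate commutations $\|[f_h, a_0]\| < \varepsilon$ and $\|[f_h, u a_0]\| < \varepsilon$ from \eqref{w.t.R.p_it1}, a short chain of triangle inequalities yields first $\|f_h u a_0 f_h\| \leq 2\varepsilon$ and then $\|u a_0 f_h^2\| \leq 3\varepsilon$, whence $\|a_0 f_h^2\| \leq 3\varepsilon$ for every $h \in G$; summing over $h$ gives $\|a_0 f^2\| \leq 3|G|\varepsilon$. On the other hand, expanding $f a_0 f = \sum_{h,k \in G} f_h a_0 f_k$ and using the same commutation together with orthogonality $f_h f_k = 0$ for $h \neq k$, one obtains $\|f a_0 f - a_0 f^2\| \leq |G|^2 \varepsilon$. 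Combined with \eqref{w.t.R.p_it4} this gives $\|a_0 f^2\| > 1 - (|G|^2 + 1)\varepsilon$, contradicting the previous estimate once $\varepsilon$ is chosen with $(|G|^2 + 3|G| + 1)\varepsilon < 1$.

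The principal obstacle compared to the unital case is that $u$ cannot be placed directly in the finite subset $F \subseteq A$, so the naive argument ``use that $f_h$ commutes with $u$'' is unavailable. The workaround is to include $u a_0 \in A$, which transfers the approximate commutation of $f_h$ with $u$ into a usable estimate against the specific positive contraction $a_0$; condition \eqref{w.t.R.p_it4}, which has no analogue in the traditional (projection-based) Rokhlin setting, is precisely what lets us close the quantitative contradiction in the absence of a global identity.
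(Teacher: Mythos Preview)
Your proof is correct, and it takes a genuinely different route from the paper's.

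Both arguments start the same way: assume $\alpha_{g_0}=\mathrm{Ad}(u)$ for some $g_0\neq 1$ and some unitary $u\in M(A)$, and apply the weak tracial Rokhlin property with a finite set containing a norm-one positive element together with its left translate by $u$ (the paper uses $\{b,bu^*\}$ with $b\in A^{\alpha}$; you use $\{a_0,ua_0\}$). From there the two proofs diverge. The paper uses that $b$ is $\alpha$-invariant to deduce a lower bound on $\|f_1 b^{1/2}\|$ from Condition~\eqref{w.t.R.p_it4}, and then compares the orthogonal elements $f_1^{1/2}bf_1^{1/2}$ and $f_{g_0}^{1/2}bf_{g_0}^{1/2}$ to force $\|\alpha_{g_0}(f_1b)-uf_1bu^*\|>0$, a contradiction. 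Your argument is more direct: from $\|uf_hu^*-f_{gh}\|<\varepsilon$ and $f_hf_{gh}=0$ you extract $\|f_huf_h\|<\varepsilon$, transfer this via the approximate commutation with $a_0$ and $ua_0$ to get $\|a_0f^2\|\leq 3|G|\varepsilon$, and then use Condition~\eqref{w.t.R.p_it4} to show $\|a_0f^2\|$ must also be close to~$1$. Your approach avoids the need for $b\in A^{\alpha}$ and the passage to square roots, and the bookkeeping is cleaner; the paper's route makes the role of the distinct Rokhlin positions $f_1$, $f_{g_0}$ more explicitly visible. Either way, the crucial feature---which you correctly highlight---is that Condition~\eqref{w.t.R.p_it4} supplies the lower bound that replaces the global identity absent in the nonunital setting.
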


\begin{proof}
The idea of the proof is similar to that of 
\cite[Proposition~3.2]{Sa15}. However, we  
 need  Condition~\eqref{w.t.R.p_it4} in 
Definition~\ref{w.t.R.p} 
instead of Condition~(iii) in 
\cite[Definition~3.2]{Sa15}, and so we need more estimates.
 Suppose to the contrary that
there are $g_{0}\in G\setminus\{1\}$ and a unitary $u $ 
in the multiplier algebra of $A$ such that $\alpha_{g_{0}}={\mathrm{Ad}} (u)$.
Set $n=\card(G)$.
Choose $\varepsilon$ with $0<\varepsilon<1$ such that
\[
\dfrac{\sqrt{1-\varepsilon}-n\varepsilon}{n}>0 \ \ \text{and}\ \
\left(\dfrac{\sqrt{1-\varepsilon}-n\varepsilon}{n}\right)^{2}-4\varepsilon>0.
\]
 By \cite[Lemma~2.5.11]{Lin.book} (with $f(t)=t^{\frac{1}{2}}$ there), 
there is $\delta>0$ such that if $x,y\in A$ are positive contractions with
$\|xy-yx\|<\delta$ then 
$\|x^{\frac{1}{2}}y-yx^{\frac{1}{2}}\|<\varepsilon$. We may assume that $\delta<\varepsilon$.
Choose a positive element $b\in A^{\alpha}$ with $\|b\|=1$. 
Applying Definition~\ref{w.t.R.p} with $F=\{b, bu^{*}\}$, with $\delta$ in place of $\varepsilon$,
with $b$ in place of $x$, and with $y=0$, we obtain a family of
orthogonal positive contractions $(f_{g})_{g\in G}$ in $A$ such that
\begin{enumerate}

\item\label{outer_it1}
$\|f_{g}b-bf_{g}\|<\delta$ and $\|f_{g}bu^{*}-bu^{*}f_{g}\|<\delta$ for all $g\in G$;
\item\label{outer_it2}
$\|\alpha_{g}(f_{h})-f_{gh}\|<\delta$ for all $g,h\in G$;
\item\label{outer_it3}
$\|fbf\|>1-\delta$ where $ f= \sum_{g \in G} f_{g}$.
\end{enumerate} 
Using \eqref{outer_it1} and \eqref{outer_it2}  we have ($1$ denotes the neutral element of $G$):
\begin{align*}
\Big\|fb^{\frac{1}{2}}-\sum_{g\in G} \alpha_{g}(f_{1}b^{\frac{1}{2}})\Big\|&=
\Big\| \sum_{g \in G} f_{g}b^{\frac{1}{2}}-\sum_{g\in G} \alpha_{g}(f_{1})b^{\frac{1}{2}}\Big\| \\
&\leq \sum_{g \in G} \|f_{g}-\alpha_{g}(f_{1})\| < n\delta.
\end{align*}
Thus, using \eqref{outer_it3} at the third step we get
\[
n\delta>\|fb^{\frac{1}{2}}\|-\sum_{g\in G} \|\alpha_{g}(f_{1}b^{\frac{1}{2}})\|
= \|fbf\|^{\frac{1}{2}}-n\|f_{1}b^{\frac{1}{2}}\|>\sqrt{1-\delta}-n\|f_{1}b^{\frac{1}{2}}\|.
\]
Hence,
\begin{equation}\label{equout1}
\|f_{1}b^{\frac{1}{2}}\|>\dfrac{\sqrt{1-\delta}-n\delta}{n}>
\dfrac{\sqrt{1-\varepsilon}-n\varepsilon}{n}.
\end{equation}
By \eqref{outer_it1}, $\|f_{1}b-bf_{1}\|<\delta$ and so 
$\|f_{1}^{\frac{1}{2}}b-bf_{1}^{\frac{1}{2}}\|<\varepsilon$. Thus,
\begin{equation}\label{equout2}
\|f_{1}b-f_{1}^{\frac{1}{2}}bf_{1}^{\frac{1}{2}}\|\leq 
\|f_{1}^{\frac{1}{2}}b-bf_{1}^{\frac{1}{2}}\|<\varepsilon.
\end{equation}
Similarly, since 
$\|f_{g}b-bf_{g}\|<\delta$ we have
\begin{equation}\label{equout3}
\|f_{g_{0}}b-f_{g_{0}}^{\frac{1}{2}}bf_{g_{0}}^{\frac{1}{2}}\|<\varepsilon.
\end{equation}
Note that $f_{g_{0}}^{\frac{1}{2}}bf_{g_{0}}^{\frac{1}{2}}\perp
f_{1}^{\frac{1}{2}}bf_{1}^{\frac{1}{2}}$ and thus by (\ref{equout1}) we have
\begin{equation}\label{equout5}
\|f_{g_{0}}^{\frac{1}{2}}bf_{g_{0}}^{\frac{1}{2}}-f_{1}^{\frac{1}{2}}bf_{1}^{\frac{1}{2}}\|
\geq \|f_{1}^{\frac{1}{2}}bf_{1}^{\frac{1}{2}}\|=
\|f_{1}^{\frac{1}{2}}b^{\frac{1}{2}}\|^{2}\geq \|f_{1}b^{\frac{1}{2}}\|^{2}>
\left(\dfrac{\sqrt{1-\varepsilon}-n\varepsilon}{n}\right)^{2}.
\end{equation}
Moreover, using \eqref{outer_it1} we have
\begin{align}\label{equout4}
\|uf_{1}bu^{*}-bf_{1}\|&=\|uf_{1}bu^{*}-\alpha_{g_{0}}(b)f_{1}\|  \\ 
&=\|uf_{1}bu^{*}-ubu^{*}f_{1}\| \notag\\
&\leq \|f_{1}bu^{*}-bu^{*}f_{1}\|<\delta  \notag.
\end{align}
Finally, using \eqref{outer_it2}, (\ref{equout2}), (\ref{equout3}), (\ref{equout5}),   
and (\ref{equout4}) we obtain
\begin{align*}
\|\alpha_{g_{0}}(f_{1}b)-uf_{1}bu^{*}\|&\geq 
\|f_{g_{0}}^{\frac{1}{2}}bf_{g_{0}}^{\frac{1}{2}}-f_{1}^{\frac{1}{2}}bf_{1}^{\frac{1}{2}}\|-
\|f_{g_{0}}^{\frac{1}{2}}bf_{g_{0}}^{\frac{1}{2}}-f_{g_{0}}b\| - \|f_{g_{0}}b- \alpha_{g_{0}}(f_{1}b)\|\\
&\quad   -
\|uf_{1}bu^{*}-bf_{1}\| 
 - \|bf_{1}-f_{1}^{\frac{1}{2}}bf_{1}^{\frac{1}{2}}\|\\
&>\left(\dfrac{\sqrt{1-\varepsilon}-n\varepsilon}{n}\right)^{2}-\varepsilon-2\delta-\varepsilon\\
&>\left(\dfrac{\sqrt{1-\varepsilon}-n\varepsilon}{n}\right)^{2}-4\varepsilon>0,
\end{align*}
which is a contradiction. This shows that $\alpha$ is pointwise outer.
\end{proof}

\begin{cor}\label{simpleweak}
Let $\alpha$ be an action of a finite group $G$ on a simple C*-algebra $A$.  If $\alpha$ has the weak tracial Rokhlin property, then $A \rtimes _{\alpha} G$  is simple,
and hence the fixed point 
algebra $A^{\alpha}$ is isomorphic to a full corner of $A \rtimes _{\alpha} G$.
\end{cor}

\begin{proof}
It follows from~\cite[Theorem 3.1]{Kishimoto} and Proposition~\ref{outer}
that $A \rtimes _{\alpha} G$  is simple.
By  \cite{Rosenberg}, there exists a projection $p$ in the multiplier algebra 
of  $A \rtimes_{\alpha} G$ such that 
$A^{\alpha} \cong p(A\rtimes _{\alpha} G)p$. Since $A \rtimes _{\alpha} G$ 
is simple, 
$p(A\rtimes _{\alpha} G)p$ is a full corner. 
 \end{proof}

The following lemma shows that if
the property stated in Definition~\ref{w.t.R.p} holds for some
$y\in A_{+}$ (and every $x,F,\varepsilon$ there), then it also holds for any $z\in A_{+}$ 
which is ``smaller" than $y$
(that is, for any positive $z$ in $\overline{yAy}$).
(Note that $\overline{Ay}\cap A_{+}= \overline{yA}\cap A_{+}=\overline{yAy}\cap A_{+} $.)

\begin{lma}\label{lemdefwtrp}
Let $\alpha \colon G \rightarrow \mathrm{Aut}(A)$ be an action of a 
finite group $G$ on a simple C*-algebra $A$. 
Let $x \in A_{+}$ with $\|x\|=1$.
Suppose that a positive element $y \in A$
has the following property:
for every $\varepsilon > 0$ and  every finite subset $F \subseteq A$ 
there exists a family of orthogonal positive contractions
$(f_{g})_{g \in G}$ in $A$ such that, with $f=\sum_{g \in G} f_{g}$,
  the following hold:
\begin{enumerate}

\item\label{lemdefwtrp_it1}
$\|f_{g}a-af_{g}\| < \varepsilon$ for all $a\in F$ and all $g\in G$;
\item\label{lemdefwtrp_it2}
$\|\alpha_{g}(f_{h})-f_{gh}\| < \varepsilon$ for all $g,h\in G$;
\item\label{lemdefwtrp_it3}
$(y^{2}-yfy - \varepsilon)_{+} \precsim_{A} x$;
\item\label{lemdefwtrp_it4}
$\|fxf\| > 1-\varepsilon$.
 \setcounter{TmpEnumi}{\value{enumi}}
\end{enumerate}
Then every positive element $z\in\overline{Ay}$ also has the same property.
Moreover, the statement holds if we replace
``orthogonal positive contractions" with ``orthogonal projections."
\end{lma}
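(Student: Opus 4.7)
The strategy is to apply the hypothesis for $y$ with the same finite set $F$ and positive element $x$, but a smaller tolerance $\varepsilon'$, and then transfer the four conclusions to $z$. We may assume $z\neq 0$, as otherwise all conditions are trivial. Since conditions~\eqref{lemdefwtrp_it1}, \eqref{lemdefwtrp_it2}, and \eqref{lemdefwtrp_it4} do not involve $y$ at all, they will transfer automatically whenever $\varepsilon'\leq\varepsilon$; only~\eqref{lemdefwtrp_it3} requires genuine work.

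To handle~\eqref{lemdefwtrp_it3}, we exploit $z\in\overline{Ay}$ by choosing $v\in A$ with $\|z-vy\|<\delta$ for $\delta$ to be determined. The identity $(vy)(vy)^{*}-(vy)f(vy)^{*}=v(y^{2}-yfy)v^{*}$, combined with a direct triangle-inequality argument (no commutator of $f$ with $v$ or $y$ is needed), gives
\[
\|(z^{2}-zfz)-v(y^{2}-yfy)v^{*}\|\leq 2(\|z\|+\|vy\|)\|z-vy\|.
\]
Choosing $\delta\leq\min\bigl(1,\,\tfrac{\|z\|}{2},\,\tfrac{\varepsilon}{4(2\|z\|+1)}\bigr)$ forces $v\neq 0$ and $\|vy\|\leq\|z\|+1$, and makes this bound at most $\varepsilon/2$.

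The conclusion now follows from two standard Cuntz-subequivalence facts. By \autoref{lemkr} there is a contraction $d\in A$ with $(z^{2}-zfz-\varepsilon/2)_{+}=(dv)(y^{2}-yfy)(dv)^{*}$. Then \autoref{lemmkey}, applied to the element $dv$ (of norm at most $\|v\|$) with tolerance $\varepsilon/2$, yields
\[
(z^{2}-zfz-\varepsilon)_{+}\precsim_{A}\bigl(y^{2}-yfy-\tfrac{\varepsilon}{2\|v\|^{2}}\bigr)_{+}.
\]
Setting $\varepsilon':=\min\bigl(\varepsilon,\,\tfrac{\varepsilon}{2\|v\|^{2}}\bigr)$ and invoking the hypothesis for $y$ with tolerance $\varepsilon'$, finite set $F$, and positive element $x$ produces orthogonal positive contractions $(f_g)_{g\in G}$ with $(y^{2}-yfy-\varepsilon')_{+}\precsim_{A}x$, which chains with the above to complete~\eqref{lemdefwtrp_it3}. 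Conditions~\eqref{lemdefwtrp_it1}, \eqref{lemdefwtrp_it2}, and~\eqref{lemdefwtrp_it4} for $z$ then hold because $\varepsilon'\leq\varepsilon$.

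The only subtle point is that $\|v\|$ depends on the approximation quality $\delta$ and is not a priori bounded in terms of $\varepsilon$; however, once $v$ is fixed, $\|v\|$ is a specific finite constant, and the tolerance $\varepsilon'$ can be chosen based on it. No deep obstacle arises; the whole argument is a careful parameter choice combined with two invocations of standard Cuntz-subequivalence lemmas.
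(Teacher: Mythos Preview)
Your proof is correct and follows essentially the same approach as the paper's: approximate $z$ by $vy$ (the paper writes $wy$), choose the tolerance for the hypothesis on $y$ to be $\min\bigl(\varepsilon,\tfrac{\varepsilon}{2\|v\|^{2}}\bigr)$, and combine the norm estimate $\|(z^{2}-zfz)-v(y^{2}-yfy)v^{*}\|<\varepsilon/2$ with \autoref{lemkr} and \autoref{lemmkey}. The only cosmetic difference is the order: the paper applies \autoref{lemmkey} first to $w(y^{2}-yfy)w^{*}$ and then \autoref{lemkr} to pass to $z^{2}-zfz$, whereas you apply \autoref{lemkr} first and then \autoref{lemmkey}; both orderings work and yield the same parameter choices.
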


\begin{proof}
The idea of the proof is similar to the argument
given in the proof of \cite[Lemma~3.5]{AGJP17}.
Let $z \in \overline{Ay}$ be a positive element, and let a finite subset 
$F \subseteq A$,  $\varepsilon >0$, and an element $x \in A_{+}$ with $\|x\|=1$ be given.
Let $ \delta$ be such that $0<\delta < \min\left\{1, \frac{\varepsilon}{4(2\|z\|+1)}  \right\}$.
 Since $z \in \overline{Ay}$, 
there exists a nonzero element $w \in A$ such that $\|z-wy\| < \delta$.  
Choose $\eta>0$ such that $  \eta < \min \{\varepsilon, \frac{\varepsilon}{2\|w\|^{2}} \}$. 
 By assumption, there exists a family of  orthogonal positive contractions
  $(f_{g})_{g \in G}$ in $A$ such that,
  with $f=\sum_{g \in G} f_{g}$,
  the following hold:
\begin{enumerate}
 \setcounter{enumi}{\value{TmpEnumi}}
\item\label{lemdefwtrp_it5}
$\|f_{g}a-af_{g}\| < \eta$ for all $a\in F$ and all $g\in G$;
\item\label{lemdefwtrp_it6}
$\|\alpha_{g}(f_{h})-f_{gh}\| < \eta$ for all $g,h\in G$;
\item\label{lemdefwtrp_it7}
$(y^{2}-yfy - \eta)_{+} \precsim_{A} x$;
\item\label{lemdefwtrp_it8}
$\|fxf\| > 1-\eta$.
\end{enumerate}
 
 Since  $\eta < \varepsilon$, \eqref{lemdefwtrp_it5}, \eqref{lemdefwtrp_it6}, 
 and \eqref{lemdefwtrp_it8}  also hold for $\varepsilon$ in place of $\eta$. 
It remains to show that $(z^{2}-zfz-\varepsilon)_{+} \precsim_{A} x$. To see this, first  by 
Lemma~\ref{lemmkey} at the first step and by \eqref{lemdefwtrp_it7}
at the last step, we have
\begin{align*}
\left(wy^{2}w^{*}-wyfyw^{*}- {\varepsilon}/{2}\right)_{+} 
&\precsim_{A} w\left(y^{2}-yfy- \frac{\varepsilon}{2\|w\|^{2}}\right)_{+}w^{*}\\
&\precsim_{A} \left(y^{2}-yfy- \frac{\varepsilon}{2\|w\|^{2}}\right)_{+}\\
&\precsim_{A} (y^{2}-yfy-\eta)_{+} \precsim_{A} x.
\end{align*}
On the other hand, we have
\begin{align*}
\big\|z^{2}-zfz -&\left(wy^{2}w^{*}-wyfyw^{*}- \tfrac{\varepsilon}{2}\right)_{+}\big\|\\ 
&\leq \|z^{2}-zfz -(wy^{2}w^{*}-wyfyw^{*})\|+ \tfrac{\varepsilon}{2}\\
&\leq \|z^{2}-wyz\|+\|wyz-wy^{2}w^{*}\|+\|zfz-wyfz\|\\
&\quad +\|wyfz-wyfyw^{*}\|+\tfrac{\varepsilon}{2} \\
&\leq \delta (2\|z\|+2\|wy\|)+ \tfrac{\varepsilon}{2}\\ 
&\leq \delta(4\|z\|+2\delta)+ \tfrac{\varepsilon}{2} \leq \delta(4 \|z\|+2)+ \tfrac{\varepsilon}{2} <  \varepsilon.
\end{align*}
Therefore, by Lemma~\ref{lemkr}, 
 $(z^{2}-zfz-\varepsilon)_{+} \precsim_{A} \left(wy^{2}w^{*}-wyfyw^{*}- \frac{\varepsilon}{2}\right)_{+} 
 \precsim_{A} x$, 
 as desired. This finishes the proof.
\end{proof}

\begin{rem}\label{rmkdefwtrp}
Let $\alpha \colon G \rightarrow \mathrm{Aut}(A)$ be an action of a finite group $G$ on a simple 
 C*-algebra $A$. 
\begin{enumerate}
\item\label{rmkdefwtrp_it2}
If $A$ is $\sigma$-unital  then $\alpha$ has
the (weak) tracial Rokhlin property if \emph{some} strictly positive element $y$ in $A$ has the property stated in 
Definition~\ref{w.t.R.p}. This follows from Lemma~\ref{lemdefwtrp} 
and  $A=\overline{yAy}=\overline{Ay}$.
 \item\label{rmkdefwtrp_it3}
 In Definition~\ref{w.t.R.p}, it is enough to take $y$ in a norm
dense subset of $A_{+}$. Moreover, if $(e_{i})_{i\in I}$ is a approximate identity
for $A$, it is enough to take $y$ from the set $\{e_{i} \colon i\in I \}$. This follows
from Lemma~\ref{lemdefwtrp} and the fact that the set
$\left\{y \in A_{+} \colon y\in \overline{Ae_{i}} \ \text{for some}\ i\in I \right\}$ 
is norm dense in $A_{+}$. 
\item\label{rmkdefwtrp_it4}
 In  Definition~\ref{w.t.R.p}, if moreover $A$ is purely infinite then 
 Condition~\eqref{w.t.R.p_it3} is automatic.
 Also, if $A$ is finite then Condition~\eqref{w.t.R.p_it4}  is redundant
 (this is proved in \cite{AGJP17}, however, we do not need it here).
 
  \item\label{rmkdefwtrp_it5}
 If moreover $A$ is unital, then
$\alpha$ has the  tracial Rokhlin property (in the sense of Definition~\ref{w.t.R.p})
if and only if the conditions of Definition~\ref{w.t.R.p} hold only for $y=1$ (and 
every $\varepsilon,F,x$ as in that definition). This implies that 
our definition of the  tracial Rokhlin property
 and  \cite[Definition~1.2]{Ph11} are equivalent in the 
unital case.
\end{enumerate}
\end{rem}

The Rokhlin property for finite group actions on \emph{arbitrary}
 C*-algebras 
 has been introduced
 in \cite{Sa15}.

\begin{prop}\label{propsan}
Let $A$ be a simple C*-algebra and let $\alpha \colon G\to \mathrm{Aut}(A)$ be an action of a 
finite group $G$ on $A$. If $\alpha$ has the Rokhlin property in the sense of
Definition~3.2 of \cite{Sa15},  then it
has the weak tracial Rokhlin property.
\end{prop}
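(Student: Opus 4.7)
The strategy is to apply Santiago's Rokhlin property directly with carefully chosen parameters and observe that the resulting Rokhlin tower $(f_g)_{g\in G}$ automatically satisfies all four conditions of \autoref{w.t.R.p}. Santiago's definition produces mutually orthogonal, approximately equivariant positive contractions that approximately commute with a prescribed finite set and whose sum $f = \sum_g f_g$ acts as an approximate unit on a prescribed positive element of $A$. The key observation is that this approximate-unit behavior is strong enough to collapse the two ``tracial'' conditions of our definition: condition~\eqref{w.t.R.p_it4} follows because $fxf$ is close in norm to $x$, while condition~\eqref{w.t.R.p_it3} becomes trivial once $\|y^2 - yfy\|$ is made smaller than $\varepsilon$, since then $(y^2-yfy-\varepsilon)_+ = 0 \precsim_A x$.

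Concretely, given $\varepsilon>0$, a finite set $F\subseteq A$, and positive $x,y \in A$ with $\|x\|=1$, I would first fix a tolerance $\delta>0$ with $2\sqrt{\delta}<\varepsilon$ and $\|y\|^{3/2}\sqrt{\delta}<\varepsilon$, then apply Santiago's Rokhlin property with tolerance $\delta$, enlarged finite set $F\cup\{x,y\}$, and auxiliary positive element $b = x + y$. This produces orthogonal positive contractions $(f_g)_{g \in G}$ satisfying $\|[f_g,a]\|<\delta$ for $a \in F \cup \{x,y\}$, $\|\alpha_g(f_h)-f_{gh}\|<\delta$, and $\|(x+y)(1-f)\|<\delta$ (working in the multiplier algebra). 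Conditions \eqref{w.t.R.p_it1} and \eqref{w.t.R.p_it2} are then immediate from the choice $\delta<\varepsilon$.

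For \eqref{w.t.R.p_it3} and \eqref{w.t.R.p_it4}, the positivity inequalities $x^2 \leq \|x\|x = x$ and $y^2 \leq \|y\|y$, combined with $0 \leq x,y \leq x+y$, give $\|x(1-f)\|^2 = \|(1-f)x^2(1-f)\| \leq \|(1-f)(x+y)(1-f)\| \leq \|(x+y)(1-f)\| < \delta$ and similarly $\|y(1-f)\|^2 \leq \|y\|\,\delta$. Since $x$ and $f$ are self-adjoint this yields $\|fx-x\| = \|x(1-f)\| \leq \sqrt{\delta}$ and $\|fy-y\|\leq\sqrt{\|y\|\delta}$, whence $\|fxf-x\| \leq \|f\|\,\|xf-x\| + \|fx-x\| \leq 2\sqrt{\delta}<\varepsilon$ (so $\|fxf\|>1-\varepsilon$) and $\|y^2-yfy\| = \|y(y-fy)\| \leq \|y\|^{3/2}\sqrt{\delta}<\varepsilon$ (so $(y^2-yfy-\varepsilon)_+=0$, which is trivially $\precsim_A x$). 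The essentially only obstacle is passing from the single approximate-unit estimate on $b=x+y$ to individual estimates for $x$ and $y$; this is exactly what the positivity inequality $(1-f)x(1-f)\leq (1-f)(x+y)(1-f)$ (and its analogue for $y$) together with $\|1-f\|\leq 1$ accomplishes, and everything else reduces to a choice of $\delta$.
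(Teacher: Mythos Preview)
Your proof is correct and follows essentially the same strategy as the paper: apply Santiago's Rokhlin property, observe that the approximate-unit behaviour of $f$ on $x$ and $y$ forces $(y^{2}-yfy-\varepsilon)_{+}=0$ and $\|fxf\|>1-\varepsilon$, while conditions~\eqref{w.t.R.p_it1} and~\eqref{w.t.R.p_it2} are immediate. The paper's version is a bit cleaner: it first uses \autoref{lemdefwtrp} to reduce to $\|y\|\leq 1$, then simply places $x$ and $y$ into the finite set $F$ and reads off $\|fx-x\|<\varepsilon/2$ and $\|fy-y\|<\varepsilon/2$ directly from Santiago's condition~(iii) (which applies to every element of $F$), so your detour through the auxiliary element $b=x+y$ and the positivity/square-root estimates is unnecessary---but it is not wrong.
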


\begin{proof}
Let $\alpha$ have the Rokhlin property in the sense of Definition~3.2 of \cite{Sa15}.
Let $x,y\in A_{+}$ with $\|x\|=1$, let $F\subseteq A$ be a finite subset, and 
let $\varepsilon>0$. We may assume that
$y\neq 0$ and $x,y\in F$. Also, by Lemma~\ref{lemdefwtrp}, we may 
further assume that $\|y\| \leq 1$.
Since $\alpha$ has the Rokhlin property,
there exists  a family of orthogonal positive contractions $(f_{g})_{g\in G}$ in $A$ 
 such that,
 with $f=\sum_{g\in G}f_{g}$, we have:
\begin{enumerate}
\item\label{propsan_it1}
$\|\alpha_{g}(f_{h})-f_{gh}\|<\frac{\varepsilon}{2}$ for all $g,h\in G$;
\item\label{propsan_it2}
$\|f_{g}a- af_{g}\|<\frac{\varepsilon}{2}$ for all $g\in G$ and all $a\in F$;
\item\label{propsan_it3}
$\| fa-a\|<\frac{\varepsilon}{2}$ for all $a\in F$.
\end{enumerate}
Then, Conditions~\eqref{w.t.R.p_it1} and \eqref{w.t.R.p_it2} in Definition~\ref{w.t.R.p} 
are satisfied (by \eqref{propsan_it1} and \eqref{propsan_it2} above).
 Since $y\in F$, by \eqref{propsan_it3} we have
\[
\|y^{2}-yfy\|\leq \|y-yf\|<{\varepsilon}/{2}<\varepsilon.
\]
Thus $(y^{2}-yfy-\varepsilon)_{+}=0\precsim_{A} x$. Hence, Condition~\eqref{w.t.R.p_it3} 
in  Definition~\ref{w.t.R.p} is also satisfied. 
To prove Condition~\eqref{w.t.R.p_it4},  by \eqref{propsan_it3} and that $x\in F$ we have
\[
\|fxf-x\|\leq \|fxf-xf\|+\|xf-x\| \leq \|fx-x\|+\|xf-x\|<\varepsilon.
\]
Thus $\|fxf\|> \|x\|-\varepsilon=1-\varepsilon$,
and so Condition~\eqref{w.t.R.p_it4} 
in  Definition~\ref{w.t.R.p} holds. 
Therefore, $\alpha$ has the weak tracial Rokhlin property.
\end{proof}

There are several weaker versions of the tracial Rokhlin property for actions 
on simple unital C*-algebras. In the sequel, we   compare them 
with our definition of the weak tracial Rokhlin property given in
Definition~\ref{w.t.R.p}. First we recall the following definition for the convenience of the reader.
(See \cite[Definition~2.2]{GHS} for an
 equivalent definition.)

\begin{df}[see \cite{GH20}]\label{unitalgar}
Let $\alpha \colon G \rightarrow \mathrm{Aut}(A)$ be an action of 
a finite group $G$ on a simple unital  C*-algebra 
$A$. Then $\alpha$ has the \emph{weak tracial Rokhlin property} if 
for every $\varepsilon > 0$,  
every finite subset $F \subseteq A$, and every positive element $x \in A$ 
with $\|x\|=1$, 
there exists a family of orthogonal positive contractions $(f_{g})_{g \in G}$ in 
$A$ such that, with $f=\sum_{g \in G} f_{g}$,
  the following hold:
\begin{enumerate}

\item\label{unitalgar_it1}
$\|f_{g}a-af_{g}\| < \varepsilon$ for all $a\in F$ and all $g\in G$;
\item\label{unitalgar_it2}
$\|\alpha_{g}(f_{h})-f_{gh}\| < \varepsilon$ for all $g,h\in G$;
\item\label{unitalgar_it3}  
$1-f \precsim_{A} x$;
\item\label{unitalgar_it4} 
$\|fxf\| > 1-\varepsilon$.

\end{enumerate} 
\end{df}

\begin{prop}\label{propequiw}
Let $\alpha \colon G \rightarrow \mathrm{Aut}(A)$ be an action of a finite group $G$ on a simple unital
C*-algebra $A$. The following statements are equivalent:
\begin{itemize}
\item[(a)]
$\alpha$ has the weak tracial Rokhlin property in the sense of Definition~\ref{unitalgar};
\item[(b)]
$\alpha$ has the weak tracial Rokhlin property in the sense of Definition~\ref{w.t.R.p}.
\end{itemize}

\end{prop}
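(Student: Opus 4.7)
My strategy is to establish the cycle (a)~$\Rightarrow$~(c)~$\Rightarrow$~(b)~$\Rightarrow$~(a), with (a)~$\Rightarrow$~(c) trivial since orthogonality gives $\|f_g f_h\| = 0$, and to prove (a)~$\Rightarrow$~(b) directly as a warm-up for orientation. For (a)~$\Rightarrow$~(b), given $(F, \varepsilon, x, y)$ with $\|x\| = 1$, I would invoke Definition~\ref{unitalgar} to obtain orthogonal positive contractions $(f_g)_{g \in G}$; Conditions~\eqref{w.t.R.p_it1}, \eqref{w.t.R.p_it2}, \eqref{w.t.R.p_it4} transfer verbatim. For Condition~\eqref{w.t.R.p_it3}, setting $v = (1-f)^{1/2} y$ yields $v^*v = y(1-f)y$ and $vv^* = (1-f)^{1/2} y^2 (1-f)^{1/2} \in \overline{(1-f)A(1-f)}$, hence $y(1-f)y \sim_A vv^* \precsim_A 1-f \precsim_A x$, which gives $(y^2 - yfy - \varepsilon)_+ \precsim_A y(1-f)y \precsim_A x$.

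The central step is (b)~$\Rightarrow$~(a), where the $\varepsilon$-gap in Condition~\eqref{w.t.R.p_it3} must be absorbed by a functional-calculus modification. Given $(F, \varepsilon, x)$, pick a small $\varepsilon' > 0$ to be determined, and apply Definition~\ref{w.t.R.p} with $y = 1_A$ and parameters $(F, \varepsilon', x)$ to obtain orthogonal positive contractions $(f_g)_{g \in G}$ satisfying $(1 - f - \varepsilon')_+ \precsim_A x$, $\|fxf\| > 1 - \varepsilon'$, and the usual commutator/equivariance bounds with $\varepsilon'$. Define the continuous function $h \colon [0,1] \to [0,1]$ by $h(t) = t/(1-\varepsilon')$ on $[0, 1-\varepsilon']$ and $h(t) = 1$ on $[1-\varepsilon', 1]$, and set $\tilde f_g := h(f_g)$, $\tilde f := \sum_g \tilde f_g$. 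Since $h(0) = 0$ and the $(f_g)$ are orthogonal, the $\tilde f_g$ are orthogonal positive contractions and $\tilde f = h(f)$. The function identity $1 - h(t) = (1 - \varepsilon' - t)_+ / (1 - \varepsilon')$ upgrades, via continuous functional calculus, to the operator identity $1 - \tilde f = (1-\varepsilon')^{-1} (1 - f - \varepsilon')_+$, and hence $1 - \tilde f \sim_A (1 - f - \varepsilon')_+ \precsim_A x$---this is Condition~\eqref{unitalgar_it3} with no residual gap. The commutator and equivariance bounds for $\tilde f_g$ follow from those for $f_g$ by uniform continuity of $h$ (see \cite[Lemma~2.5.11]{Lin.book}), while $\|h - \mathrm{id}\|_\infty = \varepsilon'$ gives $\|\tilde f - f\| \leq \varepsilon'$ and hence $\|\tilde f x \tilde f\| > \|fxf\| - 2\varepsilon' > 1 - 3\varepsilon'$. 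Choosing $\varepsilon'$ small enough relative to $\varepsilon$ and the continuity bounds yields all four conditions of Definition~\ref{unitalgar}.

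For (c)~$\Rightarrow$~(b), I would appeal to the standard perturbation result that, for each $\eta > 0$ and $n \in \mathbb{N}$, there exists $\delta > 0$ such that any family of $n$ positive contractions with pairwise products of norm less than $\delta$ can be perturbed within norm $\eta$ to an orthogonal family of positive contractions. The perturbation yields orthogonal $(\tilde f_g)$ with $\|\tilde f - f\| \leq n\eta$, and condition~\eqref{w.t.R.p_it3} is then obtained as in (a)~$\Rightarrow$~(b) using $y(1-f)y \precsim_A 1-f \precsim_A x$ together with \autoref{lemkr} to absorb the $n\eta$ perturbation into the $\varepsilon$-gap. The main obstacle is the (b)~$\Rightarrow$~(a) step: the passage from the approximate comparison $(1-f-\varepsilon')_+ \precsim_A x$ to the exact comparison $1 - \tilde f \precsim_A x$ required by (a) is subtle because the same family $(f_g)$ cannot be reused as $\varepsilon'$ shrinks, and the explicit functional-calculus construction of $h$ is what performs this upgrade in a single step by matching $1 - \tilde f$ and $(1-f-\varepsilon')_+$ exactly up to a positive scalar.
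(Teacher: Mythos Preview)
Your proposal is correct and follows essentially the same route as the paper: the cycle (a)$\Rightarrow$(c)$\Rightarrow$(b)$\Rightarrow$(a), with the crucial (b)$\Rightarrow$(a) step handled by the very same rescaling function (the paper calls it $\eta$, you call it $h$) and the identity $1-h(f)=(1-\varepsilon')^{-1}(1-f-\varepsilon')_+$. The paper packages this as functional calculus on the c.p.c.\ order zero map $\phi\colon\mathbb{C}^n\to A$, $\phi(\lambda_1,\dots,\lambda_n)=\sum_i\lambda_i e_{g_i}$, via \cite[Corollary~4.2]{WZ08}; your direct use of ordinary functional calculus on each $f_g$ (and the observation $h(\sum f_g)=\sum h(f_g)$ from orthogonality) is the same computation in more elementary language, and arguably cleaner.

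One small gap in your (c)$\Rightarrow$(b) sketch: in \autoref{unitalghs} the $f_g$ need not be orthogonal, so $1-f$ need not be positive, and your phrase ``using $y(1-f)y\precsim_A 1-f\precsim_A x$'' does not literally make sense there. The paper handles this by noting (in the remark following \autoref{unitalghs}) that $1-f\precsim_A x$ means $(1-f)^2\precsim_A x$, then reducing to $y=1$ via \autoref{rmkdefwtrp}\eqref{rmkdefwtrp_it2}, and estimating $\|(1-a)^2-(1-f)^2\|<\varepsilon^2$ for the perturbed sum $a=\sum a_g$, so that $(1-a-\varepsilon)_+\sim_A((1-a)^2-\varepsilon^2)_+\precsim_A(1-f)^2\precsim_A x$. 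Your argument is easily repaired along these lines.
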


\begin{proof}
The implication 
(a)$\Rightarrow$(b)
follows from  Remark~\ref{rmkdefwtrp}\eqref{rmkdefwtrp_it2}
(which implies that 
it is enough to take $y=1$ in 
Definition~\ref{w.t.R.p}) and the fact that 
$(1-f-\varepsilon)_{+} \precsim_{A} 1-f$.

To show  (b)$\Rightarrow$(a),
one may take small cut-downs of $f_{g}$'s in
Definition~\ref{w.t.R.p} to get the desired Cuntz-subequivalence.
We give an alternative proof using the functional calculus
for order zero maps.
 Let $F$, $x$, and
$\varepsilon$ be as in Definition~\ref{unitalgar}. 
We will find orthogonal positive contractions $(f_{g})_{g \in G}$ in $A$ 
satisfying \eqref{unitalgar_it1}--\eqref{unitalgar_it4} of 
Definition~\ref{unitalgar}.  
 We may assume that $F$ is contained in the closed unit ball of $A$.
 Set $n=\card(G)$.
Choose $\delta$ with $ 0< \delta < \tfrac{\varepsilon}{2n+1}$.
Applying Definition~\ref{w.t.R.p} with $\delta$ in place of $\varepsilon$, with $y=1$, 
and with $x, F$ as given, there are orthogonal positive contractions $(e_{g})_{g \in G}$ in $A$ 
such that, with $e= \sum _{g \in G} e_{g}$, the following hold:
\begin{enumerate}
\item\label{propequiw_pf_it1}
$\|e_{g}a-ae_{g}\| < \delta$ for all $a\in F$ and all $g\in G$;
\item\label{propequiw_pf_it2} 
$\|\alpha_{g}(e_{h})-e_{gh}\| < \delta$ for all $g,h\in G$;
\item\label{propequiw_pf_it3} 
$(1-e- \delta)_{+} \precsim_{A} x$;
\item\label{propequiw_pf_it4}  
$\|exe\| > 1-\delta$.
\setcounter{TmpEnumi}{\value{enumi}}
\end{enumerate}
We define a c.p.c.~order zero map $\phi\colon C(G) \rightarrow A$ 
by $\phi(\xi)= \sum _{g\in G} \xi(g) e_{g}$. 
Then $\phi(1)=e$. Let $\eta\colon [0,1] \rightarrow [0,1]$ be the continuous function defined by 
\[
\eta(\lambda)=
\begin{cases}
 (1-\delta)^{-1} \lambda & 0 \leq \lambda \leq 1- \delta,\\
 1& 1-\delta< \lambda \leq 1.
 \end{cases}
\]
The graph of $\eta$ is the following:

\begin{center}
\begin{tikzpicture}[scale=2]
\draw [->] (-0.2,0) --(1.4,0);
\draw [->] (0,-0.2) --(0,1.4);

\draw (1,-0.05) --(1,0.05);
\draw (0.7,-0.05) --(0.7,0.05);
\node at (1,-0.2) {\tiny{$1$}};
\node at (0.7,-0.2) {\tiny{$1-\delta$}};
\draw (-0.05,1) --(0.05,1);
\node at (-0.15,1) {\tiny{$1$}};

\draw[thick] (0,0) --(0.7,1);
\draw[thick] (0.7,1) --(1,1);
\draw[dashed] (1,0) --(1,1);
\node at (0.3,0.7) {{$\eta$}};
\end{tikzpicture}
\end{center}

Using the functional calculus for c.p.c.~order zero maps (\cite[Corollary~4.2]{WZ08}), 
we define $\psi= \eta(\phi)$. Thus $\psi\colon C(G) \rightarrow A$ is a c.p.c.~order zero map. 
Similar to the argument given in the proof of \cite[Lemma~2.8]{ABP17}, 
we see that $\|\psi(z)- \phi(z)\| \leq \delta \|z\|$ for all $z \in C(G)$ 
with $\|z\|=1$, and that 
\[ 
1-\psi(1) = \tfrac{1}{1-\delta} \left(1- \phi(1)-\delta \right)_{+} \sim_{A} 
\left(1 -\phi(1)- \delta \right)_{+} \precsim_{A} x.
\]
For any $g\in G$, set $f_{g} = \psi(\chi_{\{g\}})$.  
 Thus, $(f_{g})_{g \in G}$ is a family of orthogonal positive contractions in 
 $A$ and we have 
 \begin{enumerate}
\setcounter{enumi}{\value{TmpEnumi}}
\item\label{propequiw_pf_it5} 
 $\|f_{g}-e_{g}\| \leq \delta$ for all $g \in G$.
 \end{enumerate}
 Moreover, with $ f= \sum _{g \in G} f_{g}$, we have $1-f=1- \psi(1) \precsim_{A} x$, which is 
 \eqref{unitalgar_it3} in Definition~\ref{unitalgar}.  
Using \eqref{propequiw_pf_it5}, it is easy to see that  Conditions~\eqref{unitalgar_it1}, \eqref{unitalgar_it2},
and \eqref{unitalgar_it4} in Definition~\ref{unitalgar}  follow
from \eqref{propequiw_pf_it1}, \eqref{propequiw_pf_it2}, and \eqref{propequiw_pf_it4} above,
respectively. 
\end{proof}

In the following lemma,
we give a (seemingly) stronger equivalent definition of the 
(weak) tracial Rokhlin property for finite group 
actions on simple C*-algebras. This lemma says that we can take
two different unknowns $x,z$ in Conditions~\eqref{w.t.R.p_it3} and
\eqref{w.t.R.p_it4} of Definition~\ref{w.t.R.p} instead of  $x$.
The idea of the proof of this lemma will  be used also in a number of places later.

\begin{lma}\label{lemtrp2}
Let $\alpha \colon G \rightarrow \mathrm{Aut}(A)$ be an action of a finite group $G$ on a simple 
C*-algebra $A$.
Then  $\alpha$ has the weak tracial Rokhlin property
(respectively, tracial Rokhlin property) if and only if 
the following holds. For
every  $\varepsilon > 0$,  every finite subset 
$F \subseteq A$, and all positive elements $x,y,z \in A$ with $x\neq 0$ and $\|z\|=1$,
there exists a family of orthogonal  positive contractions 
(respectively, orthogonal projections)
$(f_{g})_{g \in G}$ in $A$  such that, with $f=\sum_{g \in G} f_{g}$,
 the following hold:
\begin{enumerate}
\item\label{lemtrp2_it1}
$\|f_{g}a-af_{g}\| < \varepsilon$ for all $a\in F$ and all $g\in G$;
\item\label{lemtrp2_it2} 
$\|\alpha_{g}(f_{h})-f_{gh}\| < \varepsilon$ for all $g,h\in G$;
\item\label{lemtrp2_it3}  
$(y^{2}-yfy - \varepsilon)_{+} \precsim_{A} x$;
\item\label{lemtrp2_it4} 
$\|fzf\| > 1-\varepsilon$.
\setcounter{TmpEnumi}{\value{enumi}}
\end{enumerate}
\end{lma}

\begin{proof}
We prove only the case of the weak tracial
Rokhlin property since the proof for the tracial Rokhlin property
is  similar.
The backward implication is obvious. 
For the forward implication, let $\alpha$
have the weak tracial Rokhlin property and let $\varepsilon, F, x,y,z$ be as in the statement.
We may assume that $F$ is contained in the closed unit ball of $A$. Let $n=\card(G)$.
Choose $\delta$  such that $0<\delta<1$ and  
\[
\left(\dfrac{\delta}{2-\delta}(1-\tfrac{\varepsilon}{2})\right)^{2}>1-\varepsilon.
\]
Put $z_{1}=(z^{{1}/{2}}-\delta)_{+}$.
Since $A$ is simple, \cite[Lemma~2.6]{Ph14} implies that there is a positive element
$d\in \overline{z_{1}Az_{1}}$ such that $d\precsim_{A} x$ and $\|d\|=1$.
Applying Definition~\ref{w.t.R.p} with $y$ and $F$ as given, with $\frac{\varepsilon}{2}$ in place of $\varepsilon$,
and with $d$ in place of $x$, there exist orthogonal positive contractions 
$(f_{g})_{g \in G}$ in $A$ such that, with $f=\sum_{g \in G} f_{g}$,
  the following hold:
\begin{enumerate}
\setcounter{enumi}{\value{TmpEnumi}}
\item\label{lemtrp2_pf_it5}
 $\|f_{g}a-af_{g}\| < \frac{\varepsilon}{2}$ for all $a\in F$ and all $g\in G$;
\item\label{lemtrp2_pf_it6}
 $\|\alpha_{g}(f_{h})-f_{gh}\| < \frac{\varepsilon}{2}$ for all $g,h\in G$;
\item\label{lemtrp2_pf_it7}
 $(y^{2}-yfy - \frac{\varepsilon}{2})_{+} \precsim_{A} d$;
\item\label{lemtrp2_pf_it8}
 $\|fdf\| > 1-\frac{\varepsilon}{2}$.
\end{enumerate}
Clearly, \eqref{lemtrp2_it1}, \eqref{lemtrp2_it2}, and \eqref{lemtrp2_it3} follow  from 
 \eqref{lemtrp2_pf_it5},  \eqref{lemtrp2_pf_it6}, and \eqref{lemtrp2_pf_it7}, respectively.
To see \eqref{lemtrp2_it4}, first note that have 
$d\in \overline{z_{1}Az_{1}}\subseteq \overline{Az_{1}}=\overline{A(z^{{1}/{2}}-\delta)_{+}}$.
Thus by Lemma~\ref{lembdd} there exists a sequence $(v_{n})_{n\in \mathbb{N}}$ in $A$
such that $\|v_{n}z^{{1}/{2}}-d\|\to 0$ and
$\|v_{n}\|\leq (\|d\|+\frac{1}{n})\delta^{-1}=(1+\frac{1}{n})\delta^{-1}$.
Then $\|fv_{n}z^{\frac{1}{2}}f-fdf\|\to 0$. Since $\|fdf\|>1-\frac{\varepsilon}{2}$ and 
$\delta<1$, there is $n\in \mathbb{N}$ such that 
$\|fv_{n}x^{\frac{1}{2}}f\|>1-\frac{\varepsilon}{2}$ and $\frac{1}{n}<1-\delta$. Hence,
\[
1-\tfrac{\varepsilon}{2}<\|fv_{n}z^{\frac{1}{2}}f\|\leq \|z^{\frac{1}{2}}f \|
\cdot  \|v_{n}\|
\leq \|z^{\frac{1}{2}}f \| (1+\tfrac{1}{n})\delta^{-1}\leq 
\|z^{\frac{1}{2}}f \| (2-\delta)\delta^{-1}.
\]
Thus, 
\[
\|fzf\|=\|z^{\frac{1}{2}}f\|^{2}>
\left(\dfrac{\delta}{2-\delta}(1-\tfrac{\varepsilon}{2})\right)^{2}>1-\varepsilon.
\]
This completes the proof.
\end{proof}

Phillips in \cite[Problem~3.2]{Ph.free} asked whether there is a reasonable formulation 
of the tracial Rokhlin property for finite group actions on simple unital  C*-algebras
in terms of the central sequence algebra. 
We give an answer to this question in the \emph{not necessarily unital} simple case 
in the following proposition. 
For a C*-algebra $A$, we write 
\[
A_{\infty}= {\ell^{\infty}(\mathbb{N}, A)} /{c_{0}(\mathbb{N}, A)}.
\]
We consider the elements of $A$  in $A_{\infty}$ as the equivalence classes of  constant sequences. 
We denote by $A_{\infty} \cap A'$ the relative commutant of $A$ in $A_{\infty}$. 
Also, $\pi_{\infty} \colon \ell^{\infty}(\mathbb{N}, A)\to A_{\infty}$
denotes the quotient map.
If $\alpha \colon G \rightarrow \mathrm{Aut} (A)$  is an action, then we denote by
 $\alpha _{\infty}$ the induced action of $G$ on $A_{\infty}$.
 
\begin{prop}\label{propcs}
Let  $\alpha \colon G \rightarrow \mathrm{Aut} (A)$ be 
an action of a finite group $G$ on a simple separable C*-algebra $A$. 
Then $\alpha$ has the weak tracial Rokhlin property 
(respectively, tracial Rokhlin property)
if and 
only if for every  $x, y,z \in A_{+}$ with $x\neq 0$, there exists a family of orthogonal  
positive contractions 
(respectively, orthogonal projections)
$(f_{g})_{g \in G}$ in $ A _{\infty} \cap A'$ such that, with $f = \sum_{g \in G} f_{g}$, 
the following hold:
\begin{enumerate}
\item\label{propcs_it1} 
$(\alpha_{\infty})_{g}(f_{h})=f_{gh}$ for all $g,h\in G$;

\item\label{propcs_it2}  
$y^{2}-yfy \precsim_{A_{\infty}} x$;

\item\label{propcs_it3}   
$\|fzf\|=\|z\|$.
\end{enumerate}
If moreover $A$ is unital, then Condition~\eqref{propcs_it2} can replaced by
$1-f\precsim_{A_{\infty}} x$.
\end{prop}

\begin{proof}
We prove only the case of the weak tracial
Rokhlin property since the proof for the tracial Rokhlin property
is essentially the same.

Assume that $\alpha$ has the weak tracial Rokhlin property. 
Let $x, y,z \in A_{+}$ with $x\neq 0$.
We may assume that $\|x\|=1$ and $z\neq 0$.
Let $\{a_{1}, a_{2}, \ldots\}$ be a norm dense countable subset of 
the closed unit ball of $A$.  For $n\in \mathbb{N}$, set $F_{n}=\{a_{1},\ldots,a_{n}\}$. 
Applying Lemma~\ref{lemtrp2} with $(x-\tfrac{1}{2})_{+}$ in place of $x$, with
$z/\|z\|$ in place of $z$, with $F_{n}$ in place of $F$, and with $\tfrac{1}{n}$ in place of $\varepsilon$, 
we obtain mutually orthogonal positive contractions $(f_{(g,n)})_{g \in G}$ in $A$ 
satisfying \eqref{lemtrp2_it1}--\eqref{lemtrp2_it4} in 
Lemma~\ref{lemtrp2}. Let $f_{g}\in A_{\infty}$ denote the equivalence class of 
$(f_{(g,n)})_{n \in \mathbb{N}}$.
Then  
 $(f_{g})_{g \in G}$ is a family of  orthogonal  
positive contractions in $A _{\infty} \cap A'$. It is easy to see  that 
Conditions~\eqref{propcs_it1} and \eqref{propcs_it3} in the statement hold. 
To see \eqref{propcs_it2},  put $h_{n}=\sum_{g \in G} f_{(g,n)}$. 
Then $f=\sum_{g \in G} f_{g}$ is the equivalence class of $(h_{n})_{n \in \mathbb{N}}$ in $A_{\infty}$ 
and $(y^{2}-yh_{n}y-\tfrac{1}{n})_{+} \precsim_{A} (x-\tfrac{1}{2})_{+}$.  By Lemma~\ref{lembdd1}, 
there is $v_{n} \in A$ such that $\|(y^{2}-yh_{n}y-\tfrac{1}{n})_{+}- v_{n}xv_{n}^{*}\|< \tfrac{1}{n}$ 
and $\|v_{n}\| \leq 2\|y\|$. Hence $\|y^{2}-yh_{n}y-v_{n}xv_{n}^{*}\|< \tfrac{2}{n}$. 
Let $v$ be the equivalence class of $(v_{n})_{n \in \mathbb{N}}$ in $A_{\infty}$. 
(Note that $(v_{n})_{n \in \mathbb{N}}$ is a bounded sequence.)
Then 
$y^{2}-yfy=vxv^{*} \precsim_{A_{\infty}} x$.

Using Lemma~\ref{lemkr} and \cite[Lemma~2.5.12]{Lin.book}, the other implication follows.

If moreover $A$ is unital and we replace 
Condition~\eqref{propcs_it2}   by
$1-f\precsim_{A_{\infty}} x$, then the forward implication follows by taking $y=1$ in the 
preceding argument. The backward implication follows from the previous case
and the fact that $y^{2}-yfy =y(1-f)y \precsim_{A_{\infty}} 1-f$.
\end{proof}

The following theorem says
that for finite group actions on simple C*-algebras with
tracial rank zero, the weak tracial Rokhlin property coincides
with the tracial Rokhlin property.
Similar results were proved  in
\cite[Theorem~1.9]{Ph15} and \cite[Theorem~2.7]{Wang18} for the 
unital case. In the proof of these results
 the tracial state space was used as an ingredient. However, in our nonunital setting, we use techniques from
Cuntz subequivalence in the proof of the following theorem instead of using  traces.

We use the following fact
(which is easy to prove) in the proof of the following theorem.
If $A$ is a C*-algebra with real rank zero and
$B$ is a finite dimensional C*-subalgebra of $A$,
then both $A_{\infty}$ and $A\cap B'$
have real rank zero
  (see \cite[Theorem~4.10(iv)]{DE02}). 

\begin{thm}\label{wtr=tr}
Let $A$ be a simple  C*-algebra with tracial rank zero, 
and let $\alpha \colon G \rightarrow \Aut(A)$ be an action of a finite group $G$ on $A$. 
If $\alpha$ has the weak tracial Rokhlin property 
then it has the  tracial Rokhlin property.
\end{thm}

\begin{proof}
Suppose that $\alpha$ has the weak tracial Rokhlin property. 
We have to show that
for  every finite subset $F \subseteq A$, every $\varepsilon > 0$, and all positive 
elements $x,y \in A$ with $\|x\|=1$,  there exists a family of orthogonal projections 
$(p_{g})_{g \in G}$ in $A$ such that, with $p=\sum_{g \in G} p_{g}$,
the following hold:
\begin{enumerate} 
\item\label{it1.wtrp=trp}
$\|p_{g}a-ap_{g}\| < \varepsilon$ for all $a\in F$ and all $g\in G$;
\item\label{it2.wtrp=trp}
 $\|\alpha_{g}(p_{h})-p_{gh}\| < \varepsilon$ for all $g,h\in G$;
\item\label{it3.wtrp=trp}
  $(y^{2}-ypy - \varepsilon)_{+} \precsim_{A} x$;
\item\label{it4.wtrp=trp}
 $\|pxp\| > 1-\varepsilon$.
 \setcounter{TmpEnumi}{\value{enumi}}
 \end{enumerate} 
 
 Set $n=\card(G)$.
Choose $\delta >0$ with $\delta < \min( \varepsilon/8, \varepsilon/2(n+1))$. 
Without loss of generality, we can assume that $x, y \in F$, that $F$ is 
contained in the closed unit ball of $A$, and that $\alpha_{g}(F)=F$, for any $g \in G$. 
We may further assume that $y \in A^{\alpha}$ (by Remark~\ref{rmkdefwtrp}\eqref{rmkdefwtrp_it3}
since $A$ has an 
approximate identity contained in $A^{\alpha}$). 
We claim that there exists $z \in A_{+}\setminus \{0\}$ such that
\begin{enumerate}
\setcounter{enumi}{\value{TmpEnumi}}

\item \label{it5.wtrp=trp}
 $z\oplus \bigoplus_{g \in G} \alpha_{g}(z) \precsim_{A} x$.

\setcounter{TmpEnumi}{\value{enumi}}
\end{enumerate}
In fact, by~\cite[Lemma 2.1]{Ph14}, there is $ z_{1} \in A_{+} \setminus \{0\}$ such that 
$z_{1} \otimes 1_{n+1} \precsim_{A} x$. 
Then by \cite[Lemma 2.4]{Ph14}, there is $z \in A_{+} \setminus \{0\}$ 
such that $z \precsim_{A} \alpha_{g}(z_{1})$ for all $g \in G$. 
We may assume that $\|z\|=1$. Hence, 
$z \oplus \bigoplus_{g \in G} \alpha_{g}(z) \precsim_{A} z_{1} \otimes 1_{n+1} \precsim_{A} x$. 
This proves \eqref{it5.wtrp=trp}.

Since $A$ has tracial rank zero, we can apply Theorem~\ref{main.trk} to find a finite dimensional 
C*-subalgebra $B$ of $A$ such that, with $q=1_{B}$, the following hold:
\begin{enumerate} \setcounter{enumi}{\value{TmpEnumi}}
\item \label{it6.wtrp=trp}
$\|qa -aq\| < \delta$ for all $a \in F$;
\item \label{it7.wtrp=trp}
$qAq \subseteq _{\delta} B$;
\item \label{it8.wtrp=trp}
$(y^{2} - yqy-\delta)_{+} \precsim_{A} z$;
\item \label{it9.wtrp=trp}
$\|qxq\| > 1-\delta$.
 \setcounter{TmpEnumi}{\value{enumi}}
 \end{enumerate}

Set $E=F\cup B$, and let $B_{0}$ be a finite subset
of $B$ with $\mathrm{span}(B_{0})=B$.
Since $\alpha$ has the weak tracial Rokhlin property,   
arguing as in the proof of Proposition~\ref{propcs} 
 with $z$ in place of $x$, with $y$ as given, and with
$ qxq$ in place of $z$ (using
$F\cup B_{0}$ in place of $F$
when applying Lemma~\ref{lemtrp2} in the proof of Proposition~\ref{propcs}),
there are 
orthogonal positive contractions $(f_{g})_{g \in G}$ in 
$A_{\infty} \cap (F\cup B_{0})'=A_{\infty} \cap E'$
 such that, 
with $f= \sum_{g \in G} f_{g}$, we have 
\begin{enumerate}
\setcounter{enumi}{\value{TmpEnumi}}
\item \label{it10.wtrp=trp}
$(\alpha_{\infty})_{g}(f_{h})=f_{gh}$ for all $g, h \in G$;
\item \label{it11.wtrp=trp}
$y^{2}-yfy \precsim_{A_{\infty}} z$;
\item \label{it12.wtrp=trp} 
$\|fqxqf\|= \|qxq\| >1-\delta$ (by \eqref{it9.wtrp=trp}).

\setcounter{TmpEnumi}{\value{enumi}}
\end{enumerate}
Note that $\|fqxqf\|= \max \{\|f_{g}qxqf_{g}\| \colon g \in G\|\}$. Hence, by \eqref{it12.wtrp=trp}, 
there exists $g_{0} \in G$ such that 
\begin{equation}\label{inq1.wtrp=trp}
\|f_{g_{0}}qxqf_{g_{0}}\| > 1-\delta.
\end{equation}
 Put $D = A_{\infty} \cap B'$. Note that $A_{\infty} \cap E' \subseteq D$ 
 and that $D$ has real rank zero
 by the remark preceding this theorem. 
 In particular, the hereditary subalgebra $\overline{qf_{g_{0}}D f_{g_{0}}q}$ 
 of $D$ has real rank zero.  Thus there is a projection 
 $r_{1} \in \overline{qf_{g_{0}}Df_{g_{0}}q}$ such that  
\[
 \|r_{1}qf_{g_{0}}-qf_{g_{0}}\|<\delta\ \  \text{and} \ \ \|r_{1}qf_{g_{0}}r_{1}-qf_{g_{0}}\|<\delta.
\]
 Since $r_{1} \leq q$, we get 
 \begin{equation} \label{inq2.wtrp=trp}
 \|r_{1}f_{g_{0}}-qf_{g_{0}}\|<\delta\ \  \text{and} \ \   \|r_{1}f_{g_{0}}r_{1}-qf_{g_{0}}\|<\delta. 
 \end{equation}
 Put $r_{g}=\alpha_{g}(r_{1})$ for every $g \in G\setminus\{1\}$, 
 and $r =\sum_{g \in G} r_{g}$. Thus $(r_{g})_{g \in G}$ 
 is a family of projections in $A_{\infty}$. Since 
 $r_{1} \in \overline{f_{g_{0}}Df_{g_{0}}} \subseteq \overline{f_{g_{0}}A_{\infty}f_{g_{0}}}$, we have $r_{g} \in 
 \overline{f_{gg_{0}}A_{\infty}f_{gg_{0}}}$, and so $r_{g}r_{h}=0$ 
 when $g \neq h$; that is, $(r_{g})_{g \in G}$ is a family of orthogonal 
 projections in $A_{\infty}$. We show that
 
 \begin{enumerate}
\setcounter{enumi}{\value{TmpEnumi}}
\item \label{it13.wtrp=trp}
$\|r_{g}a-ar_{g}\| <\varepsilon/2$ for all $a \in F$ and all $g \in G$,
\item \label{it14.wtrp=trp}
$(\alpha_{\infty})_{g}(r_{h})=r_{gh}$ for all $g, h \in G$,
\item \label{it15.wtrp=trp}
$(y^{2}-yry-\varepsilon/2)_{+} \precsim_{A_{\infty}} x$, and
\item \label{it16.wtrp=trp}
$\|rxr\| >1-\varepsilon/2$.

\setcounter{TmpEnumi}{\value{enumi}}
\end{enumerate}
 Observe that~\eqref{it14.wtrp=trp} follows from the definition of $r_{g}$.  
 For \eqref{it13.wtrp=trp}, let $a \in F$. By \eqref{it7.wtrp=trp}, 
 there is $b \in B$ such that $\|qaq-b\| <\delta$. Using this at the third step, 
 and \eqref{it6.wtrp=trp} at the fourth step, we get
 \begin{align*} 
 \|r_{1}a-ar_{1}\|&=\|qr_{1}qa-aqr_{1}q\| \\
 &\leq \|qr_{1}qa-qr_{1}qaq\|+\|qr_{1}qaq-qaqr_{1}q\| +\|qaqr_{1}q-aqr_{1}q\| \\
 & \leq \|qr_{1}\|\cdot\|qa-qaq\|+2\delta+\|qaq-aq\|\cdot\|r_{1}q\| \\
 & \leq 4\delta <\varepsilon/2.
 \end{align*}
 Thus $\|r_{g}a-ar_{g}\| < \varepsilon/2$, for all $a \in F$ and all $g\in G$. 
 Now, using~\eqref{it14.wtrp=trp} and that $\alpha_{g}(F)=F$, for all $g \in G$, 
 we get~\eqref{it13.wtrp=trp}. To see~\eqref{it15.wtrp=trp}, first by
 \eqref{inq2.wtrp=trp} at the fifth step (also recall that $y \in A^{\alpha}$ 
 and $f_{g} \in A_{\infty}\cap E'$), we get 
 \begin{align}\label{inq3.wtrp=trp}
 \notag \Big\|(y^{2}-yrfry)&- \Big[(y^{2}-yfy)+
  \sum_{g \in G}f_{gg_{0}}^{1/2}\alpha_{g}((y^{2}-yqy-\delta)_{+})f_{gg_{0}}^{1/2}\Big]\Big\| \\\notag
 & \leq \Big\|yfy-yrfry-\sum_{g \in G}f_{gg_{0}}^{1/2}\alpha_{g}(y^{2}-yqy)f_{gg_{0}}^{1/2}\Big\|
 +\delta  \\ \notag
  &\leq \Big\|yfy-\sum_{g \in G}f_{gg_{0}}^{1/2} y^{2}f_{gg_{0}}^{1/2}\Big\|
  +\Big\|yrfry-\sum_{g \in G}yf_{gg_{0}}^{1/2}\alpha_{g}(q)f_{gg_{0}}^{1/2}y\Big\|+\delta \\
  &=\Big\|\sum_{g \in G}yr_{g}f_{gg_{0}}r_{g}y -\sum_{g \in G}y \alpha_{g}(q)f_{gg_{0}}y\Big\|
  +\delta  \\\notag
 &\leq \|y^{2}\|\cdot\Big \|\sum_{g \in G} \alpha_{g}(r_{1}f_{g_{0}}r_{1})-\alpha_{g}(qf_{g_{0}})\Big\|
 +\delta  \\
&\leq n  \delta + \delta=(n+1)\delta <\varepsilon/2  \notag.
 \end{align}
 On the other hand, $yrfry \leq yry$ and so $y^{2}-yry \leq y^{2}-yrfry$. 
 Then \cite[Lemma~1.7]{Ph14} implies that
 $(y^{2}-yry-\varepsilon/2)_{+} \precsim_{A_{\infty}} (y^{2}-yrfry-\varepsilon/2)_{+}$. 
 By this at the first step, by \eqref{inq3.wtrp=trp} at the second step, by
 \eqref{it8.wtrp=trp} and \eqref{it11.wtrp=trp} at the third step, and by
 \eqref{it5.wtrp=trp} at the fifth step, we get
 \begin{align*}
 (y^{2}-yry-\varepsilon/2)_{+}&\precsim_{A_{\infty}} (y^{2}-yrfry-\varepsilon/2)_{+}\\
 & \precsim_{A_{\infty}} (y^{2}-yfy)+\sum_{g \in G}f_{gg_{0}}^{1/2}\alpha_{g}((y^{2}-yqy-\delta)_{+})f_{gg_{0}}^{1/2} \\
 &\precsim_{A_{\infty}} z\oplus \sum_{g \in G}f_{gg_{0}}^{1/2} \alpha_{g}(z) f_{gg_{0}}^{1/2} \\
 &\precsim_{A_{\infty}} z\oplus \bigoplus_{g \in G}  \alpha_{g}(z)\\
 & \precsim_{A_{\infty}} x,
  \end{align*} 
 which is \eqref{it15.wtrp=trp}.
 To prove \eqref{it16.wtrp=trp}, by \eqref{inq2.wtrp=trp} at the sixth step and 
 by \eqref{inq1.wtrp=trp} at the seventh step, we calculate
 \begin{align*}
 \|rxr\|&=\|rx^{1/2}\|^{2}=\|x^{1/2}rx^{1/2}\|\\
 & \geq \|x^{1/2}r_{1}x^{1/2}\|=\|r_{1}xr_{1}\| \\
 &\geq \|r_{1}f_{g_{0}}r_{1}xr_{1}f_{g_{0}}r_{1}\| \\
 & \geq \|f_{g_{0}}qxqf_{g_{0}}\|-2\delta \\
& >1-3\delta > 1-\varepsilon/2.
 \end{align*}
 This completes the proof of \eqref{it13.wtrp=trp}--\eqref{it16.wtrp=trp}.
 
 For each $g \in G$, let $(r_{g,k})_{k \in \N} \in \ell^{\infty}(A)$ 
 be a representing sequence for $r_{g}$, that is,
  $\pi_{\infty}(r_{g,1}, r_{g,2}, \cdots)=r_{g}$. 
  Since $r_{g}$ is a projection, we can assume that $r_{g,k}$ 
  is also a projection for all $g \in G$ and all $k \in \N$. 
  
  Choose $\eta>0$ such that if
  $a_{1},\ldots,a_{n}$ are positive
contractions in $A$ where $\|a_{i}^{2}-a_{i}\|<\eta$ and $\|a_{i}a_{j}\|<\eta$
for all $i,j=1,\ldots,n$ with $i\neq j$, then there are mutually orthogonal projections
$p_{1},\ldots,p_{n}$ in $A$ such that $\|a_{i}-p_{i}\|<\varepsilon/4n$ for all $i=1,\ldots,n$.  

 It follows from \eqref{it15.wtrp=trp} that there is $v \in A_{\infty}$ 
 such that $\|(y^{2}-yry-\varepsilon/2)_{+}-vxv^{*}\| <\varepsilon /4$, and so
 \begin{equation}\label{inq4.wtrp=trp}
 \|y^{2}-yry-vxv^{*}\| < 3\varepsilon/4. 
 \end{equation}
 Let $(v_{k})_{k \in \N}$ be a representing sequence for $v$. 
 We can choose $k$ large enough such that $\|r_{g,k}r_{h,k}\| <\eta$
  for all $g, h \in G$ with $g \neq h$ (since $r_{g}r_{h}=0$), and  that the following hold 
  (by \eqref{it13.wtrp=trp}, \eqref{it14.wtrp=trp}, \eqref{it16.wtrp=trp},
   and \eqref{inq4.wtrp=trp}):
 
 \begin{enumerate}
\setcounter{enumi}{\value{TmpEnumi}}
\item \label{it17.wtrp=trp}
$\|ar_{g,k}-r_{g,k}a\| <\varepsilon/2$ for all $g \in G$ and all $a \in F$;
\item \label{it18.wtrp=trp}
$\|\alpha_{g}(r_{h,k})-r_{gh,k}\| <\varepsilon /2$ for all $g, h \in G$;
\item \label{it19.wtrp=trp}
$\|y^{2}-yr_{k}y-v_{k}xv_{k}^{*}\|<3\varepsilon/4$, where $r_{k}=\sum_{g \in G} r_{g,k}$;
\item \label{it20.wtrp=trp}
$\|r_{k}xr_{k}\| > 1-\varepsilon/2$.

\setcounter{TmpEnumi}{\value{enumi}}
\end{enumerate}
 By the choice of $\eta$, there are orthogonal projections 
 $(p_{g})_{g \in G}$ in $A$ such that $\|r_{g,k}-p_{g}\| <\varepsilon/4n$ 
 for all $g \in G$. Since $\|r_{g,k}-p_{g}\| <\varepsilon/4$, \eqref{it17.wtrp=trp} implies 
 \eqref{it1.wtrp=trp}, and \eqref{it18.wtrp=trp} implies \eqref{it2.wtrp=trp}. 
 Put $p=\sum_{g \in G}p_{g}$. Then $\|r_{k}-p\| \leq \sum_{g \in G}\|r_{g,k}-p_{g}\| < \varepsilon/4$. 
 Then by \eqref{it19.wtrp=trp}, we have 
 $\|y^{2}-ypy-v_{k}xv_{k}^{*}\|<3\varepsilon/4+\varepsilon/4=\varepsilon$ 
 and so $(y^{2}-ypy-\varepsilon)_{+} \precsim_{A}  x$,
 which is \eqref{it3.wtrp=trp}. Finally, \eqref{it20.wtrp=trp} implies that $\|pxp\|>1-\varepsilon$, 
 which is \eqref{it4.wtrp=trp}. This finishes proof. 
  \end{proof}
 
 We present a list of examples of finite group actions with 
the (weak) tracial Rokhlin property on  simple nonunital C*-algebras. 
These examples are mainly based on
 the results of \cite{AGJP17} (and Theorems~\ref{proptensor} and \ref{thmtensor} below).
We refer the reader to \cite{AGJP17} for the proofs.

\begin{eg}\label{example_all}
In the following, $\W$ denotes the Razak-Jacelon  algebra \cite{Jac13}
and $S_{m}$ denotes the group of all 
permutations of $\{1,2,\ldots, m\}$, for $m\in\mathbb{N}$.

 \begin{enumerate}
 \item\label{example_all_it1}
 Let $A =\bigotimes _{k=1} ^{\infty}  {M}_{3}$ be the UHF~algebra of type $3^{\infty}$ and
let $B$ be a 
simple C*-algebra. 
Define $\alpha \colon \mathbb{Z}_{2} \to \mathrm{Aut}(A)$  by 
\[
\alpha=\bigotimes_{k=1}^{\infty} \mathrm{Ad} \begin{pmatrix}1&0&0 \cr 0&1&0 \cr 0&0&-1 \end{pmatrix}. 
\]
Then the action 
$ \alpha\otimes \mathrm{id} \colon \mathbb{Z}_{2} \rightarrow \mathrm{Aut}(A \otimes  B)$ 
has the weak tracial Rokhlin property, by Theorem~\ref{proptensor} 
and the the fact $\alpha$ has the tracial Rokhlin property.
(See \cite[Proposition~2.5]{Ph15} and \cite[Example~10.3.23 and Remark~10.4.9]{Ph17} 
for details about
$\alpha$.)
 In particular, if we take $B=\mathcal{W}$, 
then $ M_{3^{\infty}} \otimes \W \cong \W$ is  
stably projectionless and 
 $\alpha \otimes \id_{\W} \colon \Z_{2} \to \Aut(M_{3^{\infty}} \otimes \W)$ 
has the weak tracial Rokhlin property but not  
the tracial Rokhlin property since 
$ M_{3^{\infty}} \otimes \W$ does not have
any nonzero projections.
(We do not know whether this action has the Rokhlin property.)

\item\label{example_all_it2}
Let $A$ be a  nonelementary simple  
C*-algebra with tracial rank zero and 
let $m \in \N \setminus \{1\}$. Then the permutation action 
$\beta \colon S_{m} \rightarrow \Aut (A^{\otimes m})$ 
has the 
the  tracial Rokhlin property \cite{AGJP17}.
Here,  $A^{\otimes m}$ denotes the minimal tensor product of $m$ 
copies of $A$.
This result is similar to  
\cite[Example~5.10]{Hirshberg} which states that
the permutation action of $S_{m}$ on
$\mathcal{Z}^{\otimes m}\cong \mathcal{Z}$ 
has the 
generalized tracial Rokhlin property.
It is not clear that the action
$\beta$
 does not have 
the Rokhlin property.
It may depend on $A$. For example,
 if either $K_{0}(A)=\mathbb{Z}$ or  $K_{1}(A)=\mathbb{Z}$, then
  $\beta$ does not have the Rokhlin property (by \cite[Corollary~3.10]{Nawata}).

\item
The  flip action on
$ \mathcal{Z}\otimes  \mathcal{Z}\cong \mathcal{Z}$ has the weak tracial 
Rokhlin property  \cite[Example~5.10]{Hirshberg}
 but 
  not  the Rokhlin property. 
 For a nonunital example, if we  take $A=\mathcal{Z}\otimes \mathcal{K}$
 then the flip action on $A\otimes A \cong A$  has the weak
 tracial Rokhlin property (by a result of \cite{AGJP17}
 and that $A$ is tracially $\mathcal{Z}$-absorbing).
  This action does  not have the 
Rokhlin property, by \cite[Corollary~3.10]{Nawata}
since $K_{0}(A)=\mathbb{Z}$. We do not know whether
this action has the tracial Rokhlin property.

\item
Let $A$ be a simple 
  $\mathcal{Z}$-absorbing
 C*-algebra. Then for every finite nontrivial group $G$ there is an action
 $\alpha\colon G \to \mathrm{Aut}(A)$ with the
 weak tracial Rokhlin property. 
This follows essentially from  the fact that
$G$ embeds into some  
$S_{m}$ and that the permutation action
of $S_{m}$ on
$\mathcal{Z}^{\otimes m}\cong \mathcal{Z}$
has the
 weak tracial Rokhlin property.
 (One also needs   Theorem~\ref{proptensor} and Proposition~\ref{subgrp}).
 If moreover, $A$ is separable
 with either  $K_{0}(A)=\mathbb{Z}$ or  $K_{1}(A)=\mathbb{Z}$,
 then $\alpha$ does not have the Rokhlin property \cite[Corollary~3.10]{Nawata}.
 
 \item
 Let $\alpha$ be Blackadar's action of 
 $\mathbb{Z}_{2}$ on $M_{2^{\infty}}$ (see 
  \cite[Section~5]{Bla90} and \cite[Example~3.1]{Ph15}).
Then       
  $\alpha$ does not have the Rokhlin property but it
  has the tracial Rokhlin property 
\cite[Proposition~3.4]{Ph15}.
  Now consider the action
  $\alpha \otimes \id  \colon \Z_{2} \to M_{2^{\infty}}\otimes \mathcal{K}$.
 Then $\alpha \otimes \id$ has the weak tracial Rokhlin property
 (by Theorem~\ref{proptensor}) and so
    the  tracial Rokhlin property (by Theorem~\ref{wtr=tr}).
  However, $\alpha \otimes \id$ does not have the Rokhlin property
  (by \cite[Theorem~3.2(ii)]{Sa15}).
 \end{enumerate}
\end{eg}

\section{Permanence properties}\label{sec_per} 
The purpose of this section is to study the behavior of the (weak)
tracial Rokhlin property for finite group actions on simple C*-algebras
under restriction to subgroups, invariant hereditary subalgebras, 
 direct limits, and tensor products of actions.
 The following proposition is a nonunital version of \cite[Lemma~5.6]{Echterhoff-Phillips}. 

\begin{prop} \label{subgrp}
Let $\alpha \colon G \rightarrow \mathrm{Aut}(A)$ be an action of a finite group $G$ on a simple
 C*-algebra $A$ with the (weak) tracial Rokhlin property. If $H$ is a subgroup of $G$, then the restriction of 
 $\alpha$ to $H$ also has  the (weak) tracial Rokhlin property.
\end{prop}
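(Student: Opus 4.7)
The plan is to imitate the proof of the unital analogue \cite[Lemma~5.6]{Echterhoff-Phillips}: apply the weak tracial Rokhlin property for $G$ with a suitably shrunken tolerance, and then group the resulting elements along right cosets of $H$ in $G$ to produce the family required for $H$.

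More precisely, I would fix a finite subset $F \subseteq A$, positive elements $x,y \in A$ with $\|x\|=1$, and $\varepsilon > 0$, and set $k=[G:H]$. Choose right coset representatives $g_{1},\ldots,g_{k}$ so that $G = \bigsqcup_{i=1}^{k} Hg_{i}$. Applying \autoref{w.t.R.p} to $\alpha$ with the same $F$, $x$, $y$ but with $\varepsilon/k$ in place of $\varepsilon$ yields orthogonal positive contractions $(f_{g})_{g \in G}$ in $A$ satisfying the four conditions of that definition. I would then define
\[
e_{h} = \sum_{i=1}^{k} f_{hg_{i}} \qquad (h \in H).
\]
Since $\{hg_{i} : h \in H,\ 1\leq i\leq k\}$ is exactly the partition of $G$ into right $H$-cosets, the summands defining different $e_{h}$'s are pairwise distinct members of the orthogonal family $(f_{g})_{g \in G}$; hence $(e_{h})_{h \in H}$ is a family of mutually orthogonal positive contractions, and the total sum $e := \sum_{h \in H} e_{h}$ coincides with $f := \sum_{g \in G} f_{g}$.

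The verifications are then routine triangle-inequality bookkeeping. For $a \in F$ and $h \in H$,
\[
\|e_{h}a - ae_{h}\| \leq \sum_{i=1}^{k} \|f_{hg_{i}}a - af_{hg_{i}}\| < k\cdot\varepsilon/k = \varepsilon,
\]
which is Condition~\eqref{w.t.R.p_it1}. For the equivariance Condition~\eqref{w.t.R.p_it2}, I use that for $h,h'\in H$ one has $h\cdot(h'g_{i}) = (hh')g_{i}$, so
\[
\|\alpha_{h}(e_{h'}) - e_{hh'}\| \leq \sum_{i=1}^{k} \|\alpha_{h}(f_{h'g_{i}}) - f_{(hh')g_{i}}\| < \varepsilon,
\]
by Condition~\eqref{w.t.R.p_it2} of \autoref{w.t.R.p} for $\alpha$. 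Finally, Conditions~\eqref{w.t.R.p_it3} and \eqref{w.t.R.p_it4} for the restriction are inherited verbatim from those for $\alpha$, since $e = f$.

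There is no substantive obstacle here; the proof is essentially bookkeeping. The only points that require attention are (i) using \emph{right} cosets $Hg_{i}$ (rather than left cosets), so that left multiplication by $h \in H$ inside $\alpha_{h}$ carries the decomposition of $e_{h'}$ cleanly onto that of $e_{hh'}$ via $h\cdot h'g_{i} = (hh')g_{i}$, and (ii) shrinking the input tolerance by the factor $k=[G:H]$ to absorb the $k$-fold triangle inequality in the commutation and equivariance estimates.
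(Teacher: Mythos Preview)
Your proof is correct and follows exactly the approach the paper has in mind: the paper gives no detailed argument for \autoref{subgrp}, simply noting that it is the nonunital version of \cite[Lemma~5.6]{Echterhoff-Phillips} with a similar proof, and your coset-regrouping argument is precisely that proof. The key observations you flag---using right cosets so that $h\cdot(h'g_i)=(hh')g_i$, and that $e=\sum_{h\in H}e_h$ coincides with $f=\sum_{g\in G}f_g$ so Conditions~\eqref{w.t.R.p_it3} and \eqref{w.t.R.p_it4} transfer verbatim---are exactly the points that make the argument go through.
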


\begin{proof}
The proof is similar to the unital case. The main idea is the following. 
Let $T$ be a set of right coset
 representations for $H$ in $G$.  If  $(f_{g})_{g \in G}$ is a suitable family
of Rokhlin elements satisfying Definition~\ref{w.t.R.p} for the action of $G$ on $A$, then
 we set 
 $e_{h}=\sum _{t \in T} f_{ht}$ for any $h \in H$.
 Then $(e_{h})_{h \in H}$ is a   family of Rokhlin elements  for the action
  of $H$ on $A$.
\end{proof}

Next, we show that the weak tracial Rokhlin property is preserved by 
passing to invariant hereditary subalgebras. 


\begin{prop} \label{propwtrpher}
Let $A$ be a simple C*-algebra and let $\alpha \colon G\to \mathrm{Aut}(A)$
be an action of a finite group $G$ on $A$.
Let $B$ be an $\alpha$-invariant hereditary C*-subalgebra of $A$
and let $\beta \colon G\to \mathrm{Aut}(B)$ be the restriction of $\alpha$ to $B$.
If $\alpha$ has the weak tracial Rokhlin property then
so does $\beta$.
If  $\alpha$ has the tracial Rokhlin property, then
so does  $\beta$ whenever 
 $B$  have an approximate
 identity  (not necessarily increasing)  consisting of projections
 in the fixed point algebra.
\end{prop}

\begin{proof}
  Suppose
  that $\alpha$ has the weak tracial Rokhlin property,
  and let  we are give a finite set $F \subseteq B$, $\varepsilon >0$, and
  $x, y \in B_{+}$ with $\|x\|=1$. We can assume that $x, y \in F$
  and that $F$ is contained in the closed unit ball of $B$.
  Without  loss of generality $\varepsilon<1$.
  Set $n=\card(G)$. 
   By   \cite[Lemma~2.5.12]{Lin.book}, there is $\delta >0$  such that 
   if $(e_{g})_{g \in G}$ 
 is a family of positive contractions in $B$ satisfying $\|e_{g}e_{h}\| < 2\delta$ 
 for all $g, h \in G$ with $g \neq h$, 
 then there are  orthogonal positive contractions $(f_{g})_{g \in G}$ 
 in $B$ such that 
 $\|f_{g}-e_{g}\| < \tfrac{\varepsilon}{4n}$ for any $g \in G$. 
 We may assume  that $\delta < \tfrac{\varepsilon}{28n}$.
  
Since any approximate identity for $B^{\beta}$ is also an approximate identity for
 $B$, we can choose a positive contraction $b\in B^{\beta}$ such that 
 \begin{equation}\label{app.herd}
 \|ab-a\|<\delta \ \ \text{and} \ \ \|ba-a\| <\delta,\  
  \end{equation}
for any $a \in F$. We set $z=bxb$ and $w=byb$. 
So by \eqref{app.herd} we have
\begin{equation}\label{eq1_wtrp.herd}
\|z-x\|<2\delta\ \ \text{and}\  \   \|w-y\|<2\delta .
\end{equation} 
Applying Definition~\ref{w.t.R.p} to $\alpha$ with $F \cup \{b\}$ in place of $F$, 
with $z/\|z\|$ in place of $x$,
with $w$ in place of $y$, and  with $\delta$ in place of $\varepsilon$, we  obtain 
orthogonal positive contractions $(r_{g})_{g \in G}$ in $A$ such that, 
with $r=\sum_{g \in G} r_{g}$, the following hold:

\begin{enumerate} 
\item \label{it1.wtrp.herd}
$\|r_{g}a -ar_{g}\| < \delta$ for all $a \in F \cup \{b\}$ and all $g \in G$;
\item \label{it2.wtrp.herd}
$\|\alpha_{g}(r_{h})-r_{gh}\| < \delta$ for all $g, h \in G$;
\item \label{it3.wtrp.herd}
$(w^{2}-wrw-\delta)_{+} \precsim_{A} z$;
\item \label{it4.wtrp.herd}
$\|rzr\| > \|z\| (1-\delta)$.
 \setcounter{TmpEnumi}{\value{enumi}}
 \end{enumerate}
We put $e_{g}=br_{g}b$ for any $g \in G$, and $e=\sum_{g \in G} e_{g}$.
For any $g,h\in G$ with $g\neq h$,
using \eqref{it1.wtrp.herd} and that $r_{g}r_{h}=0$ at the third step,
we have
\[
\|e_{g}e_{h}\|=\|br_{g}b^{2}r_{h}b\|\leq \| r_{g}b^{2}r_{h}\|< 2\delta.
\]
Hence by the choice of $\delta$ there are
orthogonal positive contractions $(f_{g})_{g \in G}$ 
 in $B$ such that 
 $\|f_{g}-e_{g}\| < \tfrac{\varepsilon}{4n}$ for any $g \in G$. 
 We set $f=\sum_{g \in G} f_{g}$. Then we have
 \begin{equation}\label{eq2_wtrp.herd}
 \|f_{g}-e_{g}\| < \tfrac{\varepsilon}{4}\ \ \text{and}\ \ \|f-e\|<\tfrac{\varepsilon}{4}.
 \end{equation}
In particular, $\|e\|<\|f\|+\tfrac{\varepsilon}{4}<2$.
 We show that the family $(f_{g})_{g \in G}$ satisfies the conditions of
 Definition~\ref{w.t.R.p} for the action $\beta \colon G\to \mathrm{Aut}(B)$,
 that is, we will show that the following hold:
 \begin{enumerate}
 \setcounter{enumi}{\value{TmpEnumi}}
 
\item\label{it5.wtrp.herd}
 $\|f_{g}a-af_{g}\| < \varepsilon$ for all $a\in F$ and all $g\in G$;
\item\label{it6.wtrp.herd}
 $\|\beta_{g}(f_{h})-f_{gh}\| < \varepsilon$ for all $g,h\in G$;
\item\label{it7.wtrp.herd}
  $(y^{2}-yfy - \varepsilon)_{+} \precsim_{B} x$;
\item\label{it8.wtrp.herd}
  $\|fxf\| > 1-\varepsilon$.
  
\setcounter{TmpEnumi}{\value{enumi}}
\end{enumerate}
To see \eqref{it5.wtrp.herd}, using \eqref{eq2_wtrp.herd}, \eqref{app.herd}, and \eqref{it1.wtrp.herd},
for any $a\in F$ and any $g\in G$ we get
\begin{align*}
\|f_{g}a-af_{g}\| &\leq \|f_{g}a-e_{g}a\|+\|e_{g}a-br_{g}ab\|+\|br_{g}ab-bar_{g}b\|\\
&\ \ \ +\|bar_{g}b - ae_{g}\|+ \|ae_{g} -af_{g}\|\\
&<\tfrac{\varepsilon}{4} +2\delta +\delta+ 2\delta+ \tfrac{\varepsilon}{4}<\varepsilon.
\end{align*}

To prove \eqref{it6.wtrp.herd}, using \eqref{eq2_wtrp.herd} and \eqref{it2.wtrp.herd}, 
for any $g,h\in G$ we have
\begin{align*}
\|\beta_{g}(f_{h})-f_{gh}\| &\leq \|\alpha_{g}(f_{h})-\alpha_{g}(e_{h})\|
+ \|b\alpha_{g}(r_{h})b-br_{gh}b\|
+\|e_{gh}- f_{gh}\|\\
& <\tfrac{\varepsilon}{4} + \delta + \tfrac{\varepsilon}{4}<\varepsilon.
\end{align*}

To see  \eqref{it7.wtrp.herd}, first using \eqref{eq1_wtrp.herd} at the second step
and using \eqref{app.herd} and \eqref{eq2_wtrp.herd} at the {f}ifth step, we obtain
\begin{align*}
\left\|(y^{2}-yfy)-(w^{2}-wrw-\delta)_{+} \right\|&\leq \delta+
\|y^{2}-w^{2}\|+\|yfy-wrw\|\\
&< \delta +4 \delta + \|yfy-byeyb\|\\
&\leq  5\delta + \|yfy-byfy\|+\|byfy-byey\|\\
&\ \ \ +\|byey- byeyb\|\\
&< 5\delta + \|y-by\| +\| f-e\| +\|e\|\cdot\|y-yb\|\\
&<5\delta + \delta +\tfrac{\varepsilon}{4} +2\delta=8\delta+ \tfrac{\varepsilon}{4}
<\tfrac{\varepsilon}{3}+\tfrac{\varepsilon}{4}<\varepsilon.
\end{align*}
Now, using Lemma~\ref{lemkr} and \eqref{it3.wtrp.herd}, we get
\[
(y^{2}-yfy - \varepsilon)_{+} \precsim_{A} (w^{2}-wrw-\delta)_{+} \precsim_{A} z \precsim_{A} x.
\]
 Since $B$ is a hereditary subalgebra of $A$, we 
get  $(y^{2}-yfy - \varepsilon)_{+}\precsim_{B} x$,
 which is \eqref{it7.wtrp.herd}.
To show \eqref{it8.wtrp.herd}, 
first by \eqref{eq1_wtrp.herd} we have $\|z\|>1-2\delta$, and so
by \eqref{it4.wtrp.herd} we get
\begin{equation}\label{eq3_wtrp.herd}
\|rzr\|>\|z\|(1-\delta)>(1-2\delta)(1-\delta)>1-3\delta.
\end{equation}
Second,
using \eqref{app.herd} at the third step,
 using \eqref{it1.wtrp.herd} at the fourth and fifth steps,
 and using \eqref{eq3_wtrp.herd} at the seventh step, we obtain
\begin{align}\label{eq4_wtrp.herd}
 \|exe\| &= \|brbxbrb\|\\ \notag
&\geq \|brxrb\| -\|br(bxb-x)rb\|\\ \notag
&>\|brxrb\| -2\delta\\\notag
&>\|rbxrb\|-n\delta -2\delta\\ \notag
&>\|rbxbr\|-2n\delta -2\delta\\ \notag
&=\|rzr\|-(2n+2)\delta\\ \notag
&>1-3\delta-(2n+2)\delta  \notag\\ 
&=1-(2n+5)\delta\geq 1-\tfrac{\varepsilon}{4}  \notag.
\end{align}
Then using \eqref{eq2_wtrp.herd} at the third step and using
\eqref{eq4_wtrp.herd} at the fourth step, we get
\begin{align*}
\|fxf\| &\geq \|exe\| - \|exe-exf\|- \|exf-fxf\|\\ 
&\geq \|exe\| - \|e\|\cdot \|e-f\|
-\|e-f\| \\
&>\|exe\| - \tfrac{3\varepsilon}{4} \\
&> 1- \tfrac{\varepsilon}{4} - \tfrac{3\varepsilon}{4}=1-\varepsilon,
\end{align*}
which is \eqref{it8.wtrp.herd}.
This completes the proof of \eqref{it5.wtrp.herd}--\eqref{it8.wtrp.herd},
and shows that 
$\beta \colon G\to \mathrm{Aut}(B)$  has  the weak tracial Rokhlin property.

The proof of the second part of the statement about the tracial Rokhlin property
is similar to the proof of the first part.
 \end{proof}
 
\begin{prop}\label{corindlim}
Let $G$ be a finite group. Let 
$\left( (G,A_{i},\alpha^{(i)})_{i\in I}, (\varphi_{j,i})_{i\leq j}\right)$ be a direct system 
of simple $G$-algebras. Let $A$ be the direct limit of the $A_{i}$ and let
$\alpha \colon G\to \mathrm{Aut}(A)$ be the direct limit of the $\alpha^{(i)}$.
If  $\alpha^{(i)}$ has the (weak) tracial Rokhlin property for each $i$, then
so does $\alpha$.
\end{prop}

\begin{proof}
The statement follows essentially from the following fact.
If  $\alpha \colon G\to \mathrm{Aut}(A)$
is an action of $G$  
on a simple C*-algebra $A$ such that
for every finite set $F \subseteq A$ and every $\varepsilon>0$ there is
an $\alpha$-invariant simple C*-subalgebra $B$ of $A$ such that
$F \subseteq_{\varepsilon} B$ and the restriction of $\alpha$ to $B$ has
the
(weak)   tracial Rokhlin property,
then $\alpha \colon G\to \mathrm{Aut}(A)$ has the 
(weak)
 tracial Rokhlin property. 
\end{proof}

 Phillips  posed the following problem 
for actions on  simple unital C*-algebras. 

\begin{pbm}[\cite{Ph.free}, Problem~3.18]\label{prob_3.18}
 Let $A$ and $B$ be infinite dimensional simple unital 
C*-algebras, and let $\alpha \colon G \rightarrow \Aut(A)$ be an action 
with the tracial Rokhlin property and $\beta \colon G \rightarrow \Aut(B)$ 
be an arbitrary action. Does it follow that 
$\alpha \otimes \beta \colon G \rightarrow \Aut(A \otimes_{\min} B)$ 
has the tracial Rokhlin property? 
\end{pbm}

There are some partial solutions to this problem. 
Lemma~3.9 of \cite{Ph11} is the very special case $B = M_n$ and $\beta$ is inner. 
If $A$ has tracial rank zero and $B$ has tracial rank at most one, then by
\cite[Proposition~3.19]{Ph.free},  $\alpha \otimes \beta$ has the tracial 
Rokhlin property. Moreover, it follows from~\cite[Proposition~2.4.6]{Wang13} 
that this problem has an affirmative answer provided that $A \otimes_{\min} B$ has 
Property~(SP). The following two results are  more general 
 solutions to this problem in the \emph{not necessarily unital} simple case.

\begin{thm}\label{proptensor}
Let $\alpha \colon G\to \mathrm{Aut}(A)$ and $\beta \colon G\to \mathrm{Aut}(B)$
be actions of a finite group $G$ on simple C*-algebras $A$ and $B$.
If $\alpha$ has the weak tracial Rokhlin property
then so does $\alpha \otimes \beta \colon G \to \mathrm{Aut}(A \otimes_{\min} B)$.
\end{thm}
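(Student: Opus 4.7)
The plan is to construct orthogonal Rokhlin contractions for $\alpha \otimes \beta$ in the form $f_g = e_g \otimes c$, where $(e_g)_{g \in G}$ come from the weak tracial Rokhlin property of $\alpha$ on $A$ and $c$ is a positive contraction in $B^{\beta}$ behaving as an approximate identity of $B$. First I would perform standard reductions: by density of the algebraic tensor product in $A \otimes_{\min} B$ and a small loss of $\varepsilon$, we may assume $F = \{a_i \otimes b_i\}_{i=1}^n$ consists of elementary tensors of norm at most $1$; and by \autoref{rmkdefwtrp}\eqref{rmkdefwtrp_it3}, applied to the approximate identity of $A \otimes_{\min} B$ consisting of elementary tensors, we may further reduce to the case $y = y_A \otimes y_B$ for positive contractions $y_A \in A$, $y_B \in B$.

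To construct $c$, take an approximate identity $(u_\lambda)$ of $B$ and average: $c_\lambda = |G|^{-1} \sum_{g \in G} \beta_g(u_\lambda) \in B^{\beta}$. Since each $\beta_g(u_\lambda)$ is still an approximate identity of $B$, so is $(c_\lambda)$. Choose $c := c_\lambda$ for $\lambda$ large enough that $c$ approximately commutes with each $b_i$, $\|c y_B - y_B\|$ is small, and $\|(1_{M(A)} \otimes c)\, x\, (1_{M(A)} \otimes c) - x\|$ is small. Next, by the simplicity of $A \otimes_{\min} B$ together with the Cuntz-subequivalence techniques of \autoref{secpre} (in particular \autoref{lemkr} and \autoref{lemmkey}), extract a nonzero $x_A \in A_+$ with $x_A \otimes y_B^2 \precsim_{A \otimes_{\min} B} x$. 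Using the slice map formula for the minimal tensor norm, pick a state $\psi$ on $B$ so that $S_\psi\bigl((1_{M(A)} \otimes c) x (1_{M(A)} \otimes c)\bigr) \in A_+$ has norm close to $1$, where $S_\psi = \id \otimes \psi$. Apply the weak tracial Rokhlin property of $\alpha$ via the equivalent form of \autoref{lemtrp2} with $F_A = \{a_i\}$, $y_A$ as given, $x_A$ as just chosen, and $z_A$ the normalized slice element as witness for condition (4), to obtain orthogonal positive contractions $(e_g)_{g \in G}$ in $A$. Set $f_g = e_g \otimes c$ and $f = e \otimes c$ with $e = \sum_g e_g$.

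Conditions (1) and (2) of \autoref{w.t.R.p} then follow routinely from the approximate commutation of $c$ with the $b_i$, the corresponding conditions for $(e_g)$, and the invariance $\beta_g(c) = c$. For condition (3), use $y_B c y_B \approx y_B^2$ to compute $y f y \approx (y_A e y_A) \otimes y_B^2$; then by \autoref{lemkr} combined with \autoref{lemmkey},
\[
(y^2 - yfy - \varepsilon)_+ \precsim_{A \otimes B} (y_A^2 - y_A e y_A - \varepsilon'')_+ \otimes y_B^2 \precsim_{A \otimes B} x_A \otimes y_B^2 \precsim_{A \otimes B} x.
\]
For condition (4), the identity $(\id \otimes \psi)\bigl((e \otimes 1) \tilde{x} (e \otimes 1)\bigr) = e\, S_\psi(\tilde{x})\, e$ with $\tilde{x} = (1_{M(A)} \otimes c) x (1_{M(A)} \otimes c)$ yields $\|(e \otimes c) x (e \otimes c)\| = \|(e \otimes 1)\tilde x(e \otimes 1)\| \geq \|e\, S_\psi(\tilde{x})\, e\|$, which is close to $1$ by the choice of $z_A$ and the WTRP estimate $\|e z_A e\| > 1 - \varepsilon''$ for $\alpha$. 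The hardest step I expect to be producing $x_A$ with $x_A \otimes y_B^2 \precsim x$ inside the simple C*-algebra $A \otimes_{\min} B$, together with the slice-map handling of condition (4) for an arbitrary (not elementary) $x$; both hinge on exploiting simplicity of the tensor product and the Cuntz subequivalence tools of \autoref{secpre}.
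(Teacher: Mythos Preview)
Your overall architecture matches the paper's proof: reduce $F$ to elementary tensors, reduce $y$ to $y_A \otimes y_B$ via \autoref{rmkdefwtrp}\eqref{rmkdefwtrp_it3}, pick an approximately central $\beta$-invariant contraction $c\in B^{\beta}$, apply \autoref{lemtrp2} on the $A$ side, and set $f_g = e_g \otimes c$. The verification of Conditions~(1) and~(2) is routine and your handling of Condition~(4) via slice maps is a genuinely different, cleaner route than the paper's. The paper instead uses \autoref{lembdd1} to write $x_1\otimes x_2 \approx vxv^*$ for a \emph{bounded} $v$, approximates $v$ by a finite sum $\sum v_i\otimes w_i$, puts the $v_i,w_i$ into the finite sets controlling commutators, and then pushes the estimate $\|f(x_1\otimes x_2)f\|>(1-\delta)^2$ back to $\|fxf\|$ through $v$. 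Your slice-map argument avoids this bookkeeping: since $\|\tilde x\|=\sup_{\psi}\|S_\psi(\tilde x)\|$ for positive $\tilde x$ in a minimal tensor product, you can choose $z_A$ directly in $A$ and invoke \autoref{lemtrp2}.

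The real gap is exactly where you flag it: producing a nonzero $x_A\in A_+$ with $x_A\otimes y_B^2 \precsim_{A\otimes_{\min} B} x$. Simplicity of $A\otimes_{\min} B$ together with \autoref{lemkr} and \autoref{lemmkey} do \emph{not} suffice for this. The paper's argument uses three ingredients you have not invoked: (i)~Kirchberg's Slice Lemma to find nonzero $x_1\in A_+$, $x_2\in B_+$ with $x_1\otimes x_2\precsim (x-\delta_1)_+$; (ii)~comparison in the simple algebra $B$ (\cite[Proposition~2.7(v)]{KR00}) to get $n\in\mathbb{N}$ with $(y_B^2-\delta)_+ \precsim_B x_2\otimes 1_n$; and, crucially, (iii)~the fact that $A$ is nonelementary---which follows from \autoref{outer} since $\alpha$ is pointwise outer---so that \cite[Lemma~2.4]{Ph14} provides a nonzero $x_0\in A_+$ with $x_0\otimes 1_n \precsim_A x_1$. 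These combine to give
\[
(y_A^2 - y_A e y_A -\eta)_+ \otimes (y_B^2-\delta)_+ \;\precsim\; x_0\otimes (x_2\otimes 1_n)\;\sim\;(x_0\otimes 1_n)\otimes x_2 \;\precsim\; x_1\otimes x_2 \;\precsim\; x.
\]
Step~(iii) is the one you are missing conceptually: without knowing $A$ is nonelementary (equivalently, not type~I), there is no reason a nonzero $x_A$ with the required subequivalence exists---you must ``divide'' $x_1$ into $n$ Cuntz-equivalent pieces to absorb the $1_n$ coming from the $B$-side comparison, and that divisibility is exactly what nonelementarity buys. Note also that you will need $(y_B^2-\delta)_+$ rather than $y_B^2$ in the chain, matching the paper's estimate.
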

\begin{proof}
Suppose that we are given a finite set $F \subseteq A \otimes_{\min} B$, $\varepsilon >0$, and 
$x, y \in (A\otimes_{\min} B)_{+}$  with $\|x\|=1$. 
Set $D=A \otimes_{\min} B$.
We shall find  a family of orthogonal positive contractions 
$(f_{g})_{g \in G}$ in $A \otimes_{\min} B$ such that, with 
$f= \sum _{g \in G} f_{g}$, the following hold:
\begin{enumerate}
\item\label{proptensor_it1}
 $\|f_{g}a-af_{g}\| < \varepsilon$ for all $a\in F$ and all $g\in G$;

\item\label{proptensor_it2}
$\|\alpha_{g}(f_{h})-f_{gh}\| < \varepsilon$ for all $g,h\in G$;

\item\label{proptensor_it3}
$(y^{2}-yfy - \varepsilon)_{+} \precsim_{D} x$;

\item\label{proptensor_it4}
$\|fxf\| > 1-\varepsilon$.

\setcounter{TmpEnumi}{\value{enumi}}
\end{enumerate}
We may assume that  there exist  $c_{1},\ldots,c_{m}$ in $A$ and 
$d_{1},\ldots,d_{m}$ in $B$ such that 
$F=\{ c_{i}\otimes d_{i} \mid 1\leq i \leq m\}$ and that 
 $\|c_{i}\|, \|d_{i}\| \leq 1$ for all $1\leq i \leq m$.
By Part~\eqref{rmkdefwtrp_it3} of Remark~\ref{rmkdefwtrp}, 
we may assume that $y=y_{1} \otimes y_{2}$ for some 
$y_{1} \in A_{+}$ and $y_{2} \in B_{+}$ with 
$\| y_{1} \|, \| y_{2}\| \leq 1$. 
Choose $\delta$ such that $0 < \delta < \tfrac{\varepsilon}{3}$ and 
$\left((1-\delta)^{2}-4 \delta-\delta^{2}\right) > 1-\varepsilon$.
There exists $\delta_{1}$ such that  $\tfrac{1}{2} < \delta_{1} < 1$ 
and  $\left((1-\delta)^{2}-4 \delta-\delta^{2}\right)\delta_{1}^{2} > 1-\varepsilon$.
Put $z=(x-\delta _{1})_{+}$. It follows from Kirchberg's Slice Lemma 
 (\cite[Lemma~4.1.9]{Ro02}) that there are elements 
 $x_{1} \in A_{+}$ and $x_{2} \in B_{+}$ such that 
 $x_{1} \otimes x_{2} \precsim_{D} z$  and that 
 $\|x_{1}\|=\|x_{2}\|=1$. By Lemma~\ref{lembdd1}, there exists $ w \in A \otimes_{\min} B$ 
 such that $\|wxw^{*}-x_{1} \otimes x_{2}\| <\delta^{2}$ and 
 $\|w\| \leq \delta_{1} ^{-{1}/{2}} < \delta_{1}^{-1}$.  
 Thus there is $ v \in A \otimes_{\min} B$  where 
 $v= \sum _{i=1} ^{k} v_{i} \otimes w_{i}$ for some $ v_{i} \in A$ and $w_{i} \in B$, $i=1,\ldots,k$,
  such that
\begin{enumerate}
\setcounter{enumi}{\value{TmpEnumi}}

\item\label{proptensor_it5} 
$\|vxv^{*}-x_{1}\otimes x_{2}\| < \delta^{2}$ and $\|v\| < \delta_{1}^{-1}$.

\setcounter{TmpEnumi}{\value{enumi}}
\end{enumerate}
Put $E= \{c_{i} \mid 1 \leq i \leq m \} \cup \{v_{i} \mid 1 \leq i \leq k\} $. 
 By \cite[Proposition~2.7(v)]{KR00}, there is $ n \in \mathbb{N}$  such that
\begin{enumerate}
\setcounter{enumi}{\value{TmpEnumi}}

\item\label{proptensor_it6} 
$(y^{2}_{2}-\delta)_{+} \precsim_{B} x_{2} \otimes 1_{n}$. 

\setcounter{TmpEnumi}{\value{enumi}}
\end{enumerate}
By Proposition~\ref{outer}, $\alpha$ is pointwise outer, and so  $A$ is not elementary. 
It follows from \cite[Corollary~IV.1.2.6]{Bl06} 
that $A$ is not of Type~I.  
Now, \cite[Lemma~2.4]{Ph14} implies that there is a nonzero element $x_{0} \in A_{+}$ such that
\begin{enumerate}
\setcounter{enumi}{\value{TmpEnumi}}

\item\label{proptensor_it7}
 $x_{0} \otimes 1_{n} \precsim_{A} x_{1}$.

\setcounter{TmpEnumi}{\value{enumi}}
\end{enumerate} 
Put $M=1+ \sum _{i=1} ^{k} \|v_{i}\| + \sum_{i=1} ^{k} \|w_{i}\|$. 
Choose $\eta >0$ such that $\eta < \tfrac{\delta}{2Mk \card(G)}$. 
Applying  Lemma~\ref{lemtrp2} to the action $\alpha$ with $E$ in place of $F$,
with $\eta$ in place of $\varepsilon$, with $x_{0}$ in place of $x$, 
with $y_{1}$ in place of $y$, 
and with $x_{1}$ in place of $z$, we obtain  a family of  orthogonal 
positive contractions $(r_{g})_{g \in G}$ in $A$ 
such that, with $r=\sum _{g \in G} r_{g}$, the following hold:
\begin{enumerate}
\setcounter{enumi}{\value{TmpEnumi}}

\item\label{proptensor_it8}
$\|r_{g}c-cr_{g}\| < \eta$ for all $c\in E$ and all $g\in G$;

\item\label{proptensor_it9} 
$\|\alpha_{g}(r_{h})-r_{gh}\| < \eta$ for all $g,h\in G$;

\item\label{proptensor_it10} 
$(y_{1}^{2}-y_{1}ry_{1} - \eta)_{+} \precsim_{A} x_{0}$;

\item\label{proptensor_it11} 
$\|rx_{1}r\| > 1-\eta$.

\setcounter{TmpEnumi}{\value{enumi}}
\end{enumerate}

On the other hand, since $B$ has an approximate identity contained in $B^{\beta}$,
we can choose a positive contraction $s \in B^{\beta}$ such that
\begin{enumerate}
\setcounter{enumi}{\value{TmpEnumi}}

\item\label{proptensor_it12}
$\|y_{2}sy_{2}-y_{2}^{2}\| < \eta$, $\|[s,d_{i}]\| < \eta$ 
for all $ 1 \leq i \leq m$, $\| [s, w_{j}] \| < \eta$ for all $ 1 \leq j \leq k$, 
and $ \|sx_{2}s\| > 1- \eta$.

\setcounter{TmpEnumi}{\value{enumi}}
\end{enumerate}
Put $f_{g}= r_{g} \otimes s$ for all $g \in G$, and put $f =\sum _{g \in G} f_{g}$. 
Then $(f_{g})_{g \in G}$ is a family of mutually orthogonal positive contractions in $A \otimes_{\min} B$.  
We show that \eqref{proptensor_it1}--\eqref{proptensor_it4} hold. 
For \eqref{proptensor_it1}, let $1\leq i \leq m$. Then by  
\eqref{proptensor_it8} and \eqref{proptensor_it12} we have
\begin{align*}
\|[f_{g}, c_{i}\otimes d_{i}]\| & = \|[r_{g}\otimes s, c_{i}\otimes d_{i}]\|\\
&=\|(r_{g}c_{i}) \otimes (sd_{i})- (c_{i}r_{g}) \otimes (d_{i}s)\|\\
&\leq \|[r_{g},c_{i}] \otimes (sd_{i})\|+
\|(c_{i}r_{g}) [s,d_{i}]\|\\
&<\eta+\eta <2\delta <\varepsilon.
\end{align*}
Part~\eqref{proptensor_it2} follows from \eqref{proptensor_it9}. 
To prove \eqref{proptensor_it3}, first using \eqref{proptensor_it12} at the third step we have
\begin{align*}
\| \left(y^{2}-yfy\right)&-\left(y_{1}^{2}-y_{1}ry_{1}-\eta\right)_{+} \otimes \left(y_{2}^{2}-\delta\right)_{+}\|\\
&\leq \|\left(y_{1}^{2} \otimes y^{2}_{2}\right)-\left(y_{1}ry_{1}\right) \otimes \left(y_{2}sy_{2}\right)-
\left(y_{1}^{2}-y_{1}ry_{1}\right) 
\otimes y_{2}^{2}\|+\eta+ \delta\\
&= \|\left(y_{1}ry_{1}\right) \otimes \left(y_{2}sy_{2}-y_{2}^{2}\right)\|+2\delta\\
&< \delta + 2 \delta < \varepsilon.
\end{align*}
 Then by  Lemma~\ref{lemkr} at the first step, by 
 \eqref{proptensor_it6} and \eqref{proptensor_it10} at the second step, and by 
 \eqref{proptensor_it7} 
 at the fourth step, we  get
 \begin{align*} 
\left(y^{2}-yfy-\varepsilon\right)_{+} 
&\precsim_{D}   \left(y_{1}^{2}-y_{1}ry_{1}- \eta\right)_{+} \otimes \left(y^{2}_{2}- \delta\right)_{+} \\
& \precsim_{D} 
 x_{0} \otimes (x_{2} \otimes 1_{n}) \\  
 &  \sim_{D} (x_{0} \otimes 1_{n}) \otimes  x_{2} \\
& \precsim_{D} x_{1} \otimes x_{2} \precsim_{D} z
 \precsim_{D} x,
\end{align*}
which is \eqref{proptensor_it3}.
To prove \eqref{proptensor_it4}, first by \eqref{proptensor_it8}  and 
\eqref{proptensor_it12} we have 
\begin{align*}
\|fv-vf\| &= \left\| \sum_{i=1}^{k}(rv_{i}) \otimes (sw_{i})-  
\sum_{i=1}^{k}(v_{i}r) \otimes (w_{i}s) \right\|\\
&\leq \sum_{i=1}^{k}\| (rv_{i}-v_{i}r) \otimes (sw_{i})\|+
\sum_{i=1}^{k}\|(v_{i}r) \otimes (sw_{i}-w_{i}s) \|\\
&\leq M \sum_{i=1}^{k} \sum_{g\in G}\|r_{g}v_{i}-v_{i}r_{g}\|+ 
M \sum_{i=1}^{k} \|sw_{i}-w_{i}s\|\\
&< Mk\, \card(G) \eta + Mk \eta \leq 2Mk\, \card(G) \eta < \delta.
\end{align*}

Also, by \eqref{proptensor_it11} and \eqref{proptensor_it12} we get
\begin{align*}
\|f (x_{1} \otimes x_{2}) f\| &= \|(rx_{1}r) \otimes (s x_{2} s)\|= \|rx_{1}r\| \cdot \|s x_{2} s\|\\
&> (1-\delta)(1-\eta) >(1-\delta)^{2}.
\end{align*}

Then by using these two latter inequalities and \eqref{proptensor_it5} we calculate 
\begin{align*}
(1-\delta)^{2} &< \|f(x_{1} \otimes x_{2})f \| \\
&  \leq \|fvxv^{*}f\|+\|f(vxv^{*}-x_{1} \otimes x_{2})f\| \\
& < \|vfxv^{*}f\|+\|(fv-vf)xv^{*}f\|+ \delta ^{2} \\
& \leq \|vfxfv^{*} \|+\|vfx(v^{*}f-fv^{*})\|+\delta \|v\|+ \delta ^{2}\\
& \leq  \|v\|^{2} \|fxf\|+ \delta \|v\|+ \delta \|v\|+ \delta ^{2} \\
& \leq \delta _{1} ^{-2} \|fxf\|+2 \delta \delta_{1}^{-1}+ \delta^{2} \\
& < \delta_{1} ^{-2}\|fxf\|+4 \delta+ \delta^{2}.
\end{align*}
Therefore, by the choice of $\delta_{1}$ we obtain
\[
\|fxf\| > \left( (1-\delta)^{2}-4 \delta-\delta^{2}\right)\delta_{1}^{2} > 1-\varepsilon
\]
which is \eqref{proptensor_it4}. This completes the proof.
\end{proof}

The proof of the following result is similar to that of the preceding theorem.

\begin{thm}\label{thmtensor}
Let $\alpha \colon G\to \mathrm{Aut}(A)$ and $\beta \colon G\to \mathrm{Aut}(B)$
be actions of a finite group $G$ on simple C*-algebras $A$ and $B$.
Let $\alpha$ have the tracial Rokhlin property and let $B^{\beta}$ have an
approximate identity (not necessarily increasing) consisting of projections. 
Then the action $\alpha \otimes \beta \colon G \to \mathrm{Aut}(A \otimes_{\min} B)$
has the tracial Rokhlin property. In particular, Problem~\ref{prob_3.18}
has an affirmative answer.
\end{thm}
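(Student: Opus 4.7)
The plan is to adapt the proof of \autoref{proptensor} to the tracial Rokhlin property setting, using the hypothesis on $B^{\beta}$ to upgrade the positive contraction $s$ appearing there to an honest projection. Since $\alpha$ will supply orthogonal projections $(r_{g})_{g\in G}$ in $A$ and $s\in B^{\beta}$ can now be chosen to be a projection, the candidate family $p_{g}=r_{g}\otimes s$ will be a family of mutually orthogonal projections in $A\otimes_{\min} B$, because $(r_{g}\otimes s)(r_{h}\otimes s)=(r_{g}r_{h})\otimes s^{2}$ vanishes for $g\neq h$ and equals $r_{g}\otimes s$ for $g=h$.

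Given a finite set $F\subseteq A\otimes_{\min} B$, $\varepsilon>0$, and positive elements $x,y\in (A\otimes_{\min} B)_{+}$ with $\|x\|=1$, I would perform the same preliminary reductions as in \autoref{proptensor}: by the tracial Rokhlin analogue of \autoref{rmkdefwtrp}\eqref{rmkdefwtrp_it3} (see \autoref{rmkperm}) I may assume $y=y_{1}\otimes y_{2}$ for positive contractions $y_{1}\in A_{+}$, $y_{2}\in B_{+}$; I may assume $F=\{c_{i}\otimes d_{i}\}_{i=1}^{m}$ of contractive elementary tensors; and Kirchberg's Slice Lemma produces $x_{1}\in A_{+}$, $x_{2}\in B_{+}$ with $\|x_{1}\|=\|x_{2}\|=1$ and $x_{1}\otimes x_{2}\precsim_{D}(x-\delta_{1})_{+}$, where $D=A\otimes_{\min} B$. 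Pointwise outerness (\autoref{outer}) together with \cite[Lemma~2.4]{Ph14} yield $x_{0}\in A_{+}\setminus\{0\}$ with $x_{0}\otimes 1_{n}\precsim_{A} x_{1}$, where $n$ comes from \cite[Proposition~2.7(v)]{KR00} applied to $(y_{2}^{2}-\delta)_{+}\precsim_{B} x_{2}\otimes 1_{n}$.

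Next, I would apply the tracial Rokhlin property for $\alpha$ via the TRp analogue of \autoref{lemtrp2} (cf.\ \autoref{rmkperm}) to obtain orthogonal projections $(r_{g})_{g\in G}$ in $A$ satisfying, with $r=\sum_{g}r_{g}$, almost-commutation with a suitable finite set $E\subseteq A$ containing the $c_{i}$ together with the coefficients $v_{j}$ of an intermediate $v\in A\otimes_{\min}B$ with $\|vxv^{*}-x_{1}\otimes x_{2}\|<\delta^{2}$ and $\|v\|<\delta_{1}^{-1}$; almost-equivariance $\|\alpha_{g}(r_{h})-r_{gh}\|<\eta$; the Cuntz estimate $(y_{1}^{2}-y_{1}ry_{1}-\eta)_{+}\precsim_{A}x_{0}$; and $\|rx_{1}r\|>1-\eta$. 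Crucially, since $B^{\beta}$ has an approximate identity consisting of projections, I can select a \emph{projection} $s\in B^{\beta}$ (rather than just a positive contraction) satisfying $\|y_{2}sy_{2}-y_{2}^{2}\|<\eta$, $\|[s,d_{i}]\|<\eta$, $\|[s,w_{j}]\|<\eta$ (where the $w_{j}$ are the $B$-factors in the tensor expansion of $v$), and $\|sx_{2}s\|>1-\eta$; these are all automatic once $s$ is chosen far enough out in the approximate identity.

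It remains to verify conditions \eqref{t.R.p_it1}--\eqref{t.R.p_it4} of \autoref{t.R.p} for $p_{g}=r_{g}\otimes s$, and these follow by essentially verbatim estimates as in the proof of \autoref{proptensor} with $p_{g}$ replacing $f_{g}$. The only genuinely nontrivial step is the Cuntz subequivalence in \eqref{t.R.p_it3}, which follows from the chain
\[
(y^{2}-ypy-\varepsilon)_{+}\precsim_{D}(y_{1}^{2}-y_{1}ry_{1}-\eta)_{+}\otimes(y_{2}^{2}-\delta)_{+}\precsim_{D} x_{0}\otimes(x_{2}\otimes 1_{n})\sim_{D}(x_{0}\otimes 1_{n})\otimes x_{2}\precsim_{D} x_{1}\otimes x_{2}\precsim_{D} x,
\]
using \autoref{lemkr} to absorb the error. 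Finally, the ``in particular'' statement for \autoref{prob_3.18} is immediate: when $B$ is unital, $1_{B}\in B^{\beta}$, giving the trivial one-element approximate identity of projections, so the hypothesis is satisfied automatically.
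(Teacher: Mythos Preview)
Your proposal is correct and follows exactly the approach the paper intends: the paper's own proof simply reads ``The proof is similar to that of \autoref{proptensor},'' and your adaptation---replacing the positive contractions $(f_{g})$ from the weak tracial Rokhlin property by projections $(r_{g})$ from the tracial Rokhlin property of $\alpha$, and replacing the positive contraction $s\in B^{\beta}$ by a projection drawn from the assumed approximate identity of projections in $B^{\beta}$---is precisely the intended modification. The verification of Conditions~\eqref{t.R.p_it1}--\eqref{t.R.p_it4} of \autoref{t.R.p} then proceeds verbatim as in \autoref{proptensor}, and your treatment of the ``in particular'' clause (taking $s=1_{B}$ when $B$ is unital) is correct.
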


The following corollary follows from  Theorems~\ref{proptensor} and \ref{thmtensor} by taking
$B=M_{n}$ and $\beta$ to be the trivial action.

\begin{cor}\label{cortensor}
Let $\alpha \colon G\to \mathrm{Aut}(A)$ 
be an action of a finite  group $G$ with the (weak) tracial Rokhlin property 
on a simple  C*-algebra $A$.
Then the induced  action of $G$ on $M_{n}(A)$
has the (weak) tracial Rokhlin property for any $n\in \mathbb{N}$.
\end{cor}

The following result gives a criterion for the nonunital tracial Rokhlin property
in terms of the unital tracial Rokhlin property 
in the case that the underlying algebra has ``enough" projections.

\begin{prop}\label{proploctrp}
Let  $\alpha \colon G\to \mathrm{Aut}(A)$
be an action of a finite group $G$ on a simple C*-algebra $A$. Suppose that 
$A$ has an approximate identity (not necessarily increasing) $(p_{i})_{i\in I}$
consisting of projections such that each $p_{i}$ is in $A^{\alpha}$.
Then $\alpha$ has the tracial Rokhlin property
if and only if the restriction of $\alpha$ to $p_{i}Ap_{i}$ 
has the tracial Rokhlin property for every $i \in I$.
\end{prop}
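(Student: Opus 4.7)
The plan is to handle the two implications separately. For the forward direction, notice that since each $p_i$ is a projection in $A^{\alpha}$, the corner $p_i A p_i$ is an $\alpha$-invariant hereditary C*-subalgebra of $A$ which is itself unital with unit $p_i \in (p_i A p_i)^{\alpha}$. In particular, $p_i A p_i$ trivially admits an approximate identity of projections lying in its fixed point algebra. I would therefore invoke the tracial Rokhlin property analogue of \autoref{propwtrpher} (as recorded in \autoref{rmkperm}), which directly gives the tracial Rokhlin property for the restriction of $\alpha$ to $p_i A p_i$.

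For the backward direction, given $\varepsilon>0$, a finite subset $F \subseteq A$, and $x,y \in A_+$ with $\|x\|=1$, I would first fix a small tolerance $\eta>0$ (to be specified at the end) and then, using that $(p_i)_{i\in I}$ is an approximate identity, choose $i \in I$ with $\|p_i a - a\|<\eta$ and $\|a p_i - a\|<\eta$ for every $a \in F \cup \{x,y\}$. Set $z = p_i x p_i$, $w = p_i y p_i$ and $F' = \{p_i a p_i : a \in F\} \subseteq p_i A p_i$, so that $\|z-x\|<2\eta$, $\|w-y\|<2\eta$, and $\|z\| > 1-2\eta$. Applying the (unital) tracial Rokhlin property of $\alpha|_{p_i A p_i}$ to the data $(F',\eta,z/\|z\|)$---taking $y = p_i$ as the distinguished positive element, which is legitimate in the unital corner by \autoref{rmkdeftrp}---yields orthogonal projections $(q_g)_{g\in G}$ in $p_i A p_i$ with $q = \sum_{g\in G} q_g$ satisfying: (a) $\|q_g b - b q_g\| < \eta$ for $b \in F'$, $g\in G$; (b) $\|\alpha_g(q_h) - q_{gh}\|<\eta$ for $g,h\in G$; (c) $p_i - q \precsim_A z$ (using that $p_i - q$ is a projection, cf.\ \autoref{lemmvn}); and (d) $\|qzq\| > \|z\|(1-\eta)$.

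The final step is to verify that the same family $(q_g)_{g\in G}$ witnesses the tracial Rokhlin property for $\alpha$ on $A$. Conditions~\eqref{t.R.p_it1} and \eqref{t.R.p_it2} of \autoref{t.R.p} follow from (a), (b) and the triangle inequality, using $\|p_i a p_i - a\| \leq 2\eta$. For \eqref{t.R.p_it4}, the identity $qp_i = p_i q = q$ (since $q \leq p_i$) gives $qxq = q p_i x p_i q = qzq$, so $\|qxq\| > \|z\|(1-\eta) > (1-2\eta)(1-\eta)$, which exceeds $1-\varepsilon$ for $\eta$ sufficiently small. For \eqref{t.R.p_it3}, the crucial algebraic identity is $w^2 - wqw = w p_i w - w q w = w(p_i - q)w$ (again using $p_i q = q$, $p_i w = w$). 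Then $w(p_i-q)w \sim_A (p_i-q)^{1/2} w^2 (p_i-q)^{1/2} \leq \|w\|^2(p_i-q)$, so $w^2 - wqw \precsim_A p_i - q \precsim_A z \precsim_A x$ by (c). It remains to relate $y^2 - yqy$ to $w^2 - wqw$: standard estimates give $\|y^2 - w^2\|$ and $\|yqy - wqw\| = \|yqy - p_i y q y p_i\|$ both bounded by a linear function of $\eta$ depending only on $\|y\|$, so for $\eta$ chosen small enough $\|(y^2 - yqy) - (w^2 - wqw)\| < \varepsilon/2$, and \autoref{lemkr} yields $(y^2 - yqy - \varepsilon)_+ \precsim_A (w^2 - wqw - \varepsilon/2)_+ \precsim_A x$.

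The main technical obstacle is the cascade of $\eta$-estimates needed in the backward direction to propagate the four conditions from the corner $p_i A p_i$ to $A$; although each individual estimate is routine, choosing a single $\eta$ (and corresponding index $i$) that simultaneously makes all four conditions work---particularly the Cuntz comparison in \eqref{t.R.p_it3}, which must be handled via \autoref{lemkr} and the manipulation above rather than by plain norm estimates---requires care in the order of quantifiers.
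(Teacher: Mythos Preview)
Your proof is correct and follows essentially the same approach as the paper. The forward direction is identical: both you and the paper invoke the tracial Rokhlin property analogue of \autoref{propwtrpher} recorded in \autoref{rmkperm}. For the backward direction, the paper simply cites the tracial Rokhlin property analogue of \autoref{lemloc} (also recorded in \autoref{rmkperm}), whereas you carry out the corresponding approximation argument explicitly by hand; your computation is exactly what makes that analogue ``immediate from \autoref{w.t.R.p}'' in this special case, so the underlying idea is the same and you could shorten your argument to a single sentence by invoking \autoref{lemloc} directly.
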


\begin{proof}
The ``{if}" part follows from  Proposition~\ref{corindlim}, and the ``{only if}" part 
follows from the second part of  Proposition~\ref{propwtrpher}.
\end{proof}



\section{Crossed products}\label{sec_cross}
The main goal of this section is to 
show that some classes of simple C*-algebras are closed under
 taking crossed products 
and fixed point algebras by
actions of finite groups with the  tracial Rokhlin property.
In particular, this is true for the class of simple C*-algebras with tracial rank zero.
This  extends a result of Phillips (\cite[Theorem~2.6]{Ph11}) to the nonunital case
and is  evidence that our definition of the (weak) tracial Rokhlin property
on simple  C*-algebras is the right one.

The following proposition is essential
in the sequel.

\begin{prop}\label{propcross}
Let $G$ be a finite group and let $\mathcal{C}$ be a class of simple 
 C*-algebras with
the following properties:
\begin{enumerate}
\item\label{propcross_1}
if $A$ is a simple   C*-algebra and $p\in A$ is a nonzero projection, then
$A\in\mathcal{C}$ if and only if $pAp\in\mathcal{C}$ (in particular, this is the case
if $\mathcal{C}$ is closed under Morita equivalence);
\item\label{propcross_2}
if $A\in\mathcal{C}$ is unital and $\alpha$ is an action of $G$ on $A$ with the tracial
Rokhlin property then $A\rtimes_{\alpha} G\in \mathcal{C}$;
\item\label{propcross_3}
if $A\in \mathcal{C}$ and $B$ is a C*-algebra with $A \cong B$, then 
$B\in \mathcal{C}$.
\end{enumerate}
Then $\mathcal{C}$ is closed under  taking crossed products
and fixed point algebras by actions of $G$ with the
tracial Rokhlin property (and hence \eqref{propcross_2} above holds
without the  assumption that $A$ is unital).
\end{prop}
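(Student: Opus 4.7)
The plan is to reduce the nonunital case to hypothesis~\eqref{propcross_2} by cutting $A$ down by a suitable $\alpha$-invariant projection. Let $A \in \mathcal{C}$ and let $\alpha \colon G \to \mathrm{Aut}(A)$ have the tracial Rokhlin property; if $A$ is unital there is nothing to do, so assume it is not. First I would apply \autoref{lemtrp1} with a fixed positive contraction $z \in A$ of norm one, with $F = \emptyset$, with $x = z$, with $y = 0$, and with $\varepsilon = \tfrac{1}{2}$, to obtain orthogonal projections $(p_g)_{g \in G}$ in $A$ such that $p := \sum_{g \in G} p_g$ lies in $A^{\alpha}$ and $\|pzp\| > \tfrac{1}{2}$; in particular, $p$ is a nonzero $\alpha$-invariant projection of $A$.

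Set $B = pAp$. Then $B$ is a unital simple C*-subalgebra of $A$ with unit $p$, so hypothesis~\eqref{propcross_1} gives $B \in \mathcal{C}$. Since $\alpha_g(p) = p$ for every $g \in G$, the action $\alpha$ restricts to an action $\beta \colon G \to \mathrm{Aut}(B)$. The analogue of \autoref{propwtrpher} for the tracial Rokhlin property (noted in \autoref{rmkperm}) now applies: its extra assumption---that $B^{\beta}$ have an approximate identity consisting of projections---is automatic since $B$ is unital, so $\beta$ has the tracial Rokhlin property. Hypothesis~\eqref{propcross_2} applied to $(B, \beta)$ therefore yields $B \rtimes_{\beta} G \in \mathcal{C}$.

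Finally I would identify $B \rtimes_{\beta} G$ with the corner $p(A \rtimes_{\alpha} G) p$. Viewing $A \subseteq A \rtimes_{\alpha} G$ and writing $u_g$ for the canonical unitaries implementing $\alpha$, the relation $u_g p u_g^{*} = \alpha_g(p) = p$ gives $p u_g = u_g p$ for every $g \in G$; a direct computation then shows
\[
p(A \rtimes_{\alpha} G) p \;=\; \overline{\spn}\bigl\{\, pap \cdot u_g \;\big|\; a \in A,\ g \in G \,\bigr\} \;=\; B \rtimes_{\beta} G,
\]
where on the right the unitaries $p u_g$ implement $\beta$ on $B$. By hypothesis~\eqref{propcross_3} this forces $p(A \rtimes_{\alpha} G) p \in \mathcal{C}$. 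By \autoref{simpleweak}, $A \rtimes_{\alpha} G$ is simple, and $p$ is a nonzero projection in it; applying hypothesis~\eqref{propcross_1} to the pair $(A \rtimes_{\alpha} G, p)$ we conclude $A \rtimes_{\alpha} G \in \mathcal{C}$, as required. The only step needing any real care is the identification of the corner with $B \rtimes_{\beta} G$, which crucially uses that $p$ is $\alpha$-fixed so that each $p u_g$ lies inside the corner and implements $\beta$ on $B$.
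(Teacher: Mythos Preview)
Your proof is correct and follows essentially the same approach as the paper: obtain a nonzero $\alpha$-invariant projection $p$ via \autoref{lemtrp1}, cut down to $B = pAp$, apply the hereditary-subalgebra analogue of the tracial Rokhlin property (\autoref{rmkperm}) and hypothesis~\eqref{propcross_2}, identify $B \rtimes_{\beta} G$ with the corner $p(A \rtimes_{\alpha} G)p$, and finish with hypotheses~\eqref{propcross_3} and~\eqref{propcross_1}. You are in fact slightly more careful than the paper in explicitly invoking \autoref{simpleweak} to ensure $A \rtimes_{\alpha} G$ is simple before applying~\eqref{propcross_1}.
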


\begin{proof}
Let  $\mathcal{C}$ be a class of simple C*-algebras as in the statement.
Let $A\in \mathcal{C}$ and  let $\alpha \colon G \to \mathrm{Aut}(A)$
be an action with the tracial Rokhlin property.
We show that $A\rtimes_{\alpha} G\in \mathcal{C}$.
We may assume that  $A$ is nonzero.
First note that 
  there exists a nonzero projection $p$ in $A^{\alpha}$. 
In fact, if $(p_{g})_{g\in G}$
  is a family of Rokhlin projections for a given $\varepsilon$ according to
  Definition~\ref{w.t.R.p} and if $q=\sum_{g\in G}p_{g}$,
  then $\|E(q)-q\|<\varepsilon\,\mathrm{card}(G)$,
  where $E\colon A\to A^{\alpha}$ is the canonical conditional
  expectation. Using functional
calculus within $A^{\alpha}$ gives a projection $p\in A^{\alpha}$ arbitrarily close to $E(q)$.
So, $p$ is arbitrary close to $q$, and hence it is nonzero.
 
Let $\alpha \colon G\to \mathrm{Aut}(pAp)$ be the restriction of $\alpha$ to $pAp$.
Now, the second part  of Proposition~\ref{propwtrpher}
  implies that $\beta$ has the tracial Rokhlin property.
By Condition~\eqref{propcross_1}, $pAp \in \mathcal{C}$. Thus,
by Condition~\eqref{propcross_2},
 $pAp \rtimes _{\beta} G \in \mathcal{C}$. Observe that 
$pAp \rtimes _{\beta} G\cong p(A \rtimes _{\alpha} G)p$.
In fact, the map $\varphi\colon pAp \rtimes _{\beta} G\to p(A \rtimes _{\alpha} G)p$
defined by $\varphi(\sum_{g\in G}b_{g}u_{g})=\sum_{g\in G}b_{g}\delta_{g}$
where $b_{g}\in pAp$ for all $g\in G$, is easily seen to be a surjective $*$-isomorphism.
Thus, by Condition~\eqref{propcross_3}, $p(A \rtimes _{\alpha} G)p \in \mathcal{C}$.
Now Condition~\eqref{propcross_1} implies that 
$ A \rtimes _{\alpha} G \in \mathcal{C}$.
Also, $ A^{\alpha} \in \mathcal{C}$, 
by Condition~\eqref{propcross_1}.
\end{proof}

In the following theorem, we extend Phillips's
result \cite[Theorem~2.6]{Ph11} to the nonunital case. 

\begin{thm}\label{thmcross}
Let $A$ be a simple   C*-algebra with tracial rank zero and let $\alpha$
be an action of a finite group $G$ on $A$ with the weak tracial Rokhlin property.
Then the crossed product  $A \rtimes _{\alpha} G$ and the fixed point algebra
$A^{\alpha}$ are simple C*-algebras with tracial rank zero.
\end{thm}

\begin{proof}
 It follows from Theorem~\ref{wtr=tr} that  $\alpha$  in fact has the tracial Rokhlin property.
Let $\mathcal{C}$ denote the class of simple C*-algebras
with tracial rank zero. By 
Theorem~\ref{cormorita}, $\mathcal{C}$ satisfies Condition~\eqref{propcross_1} in
Proposition~\ref{propcross}. Also, by \cite[Theorem~2.6]{Ph11},
$\mathcal{C}$ satisfies Condition~\eqref{propcross_2} in
Proposition~\ref{propcross}. (Note that the assumption of separability
is unnecessary in \cite[Theorem~2.6]{Ph11}.)
Clearly, $\mathcal{C}$ satisfies Condition~\eqref{propcross_3} in
Proposition~\ref{propcross}.
Thus  Proposition~\ref{propcross} yields the first part of the statement
about the crossed product.

The second part of the statement about the fixed point algebra
follows from Corollary~\ref{simpleweak} which says that 
$A^{\alpha}$
 is isomorphic  to a full corner of
 $A \rtimes _{\alpha} G$, and 
 Theorem~\ref{cormorita} which implies that
 the tracial rank zero is passed to corners.
\end{proof}


The following corollary is immediate from Example~\ref{example_all}\eqref{example_all_it2}
 and Theorem~\ref{thmcross}.

\begin{cor}\label{cor_corss_tr0}
Let $A$ be a  simple 
nonelementary  C*-algebra with tracial rank zero and 
let $\beta \colon S_{m} \rightarrow A^{\otimes m}$ be the permutation 
action,
where $m\geq 2$.
Then the crossed product $A^{\otimes m} \rtimes _{\beta} S_{m}$ is a 
simple C*-algebra with  tracial rank zero.
\end{cor}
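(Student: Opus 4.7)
The plan is to apply Theorem~\ref{thmcross} to the pair $(A^{\otimes m}, \beta)$, using Corollary~\ref{cor_per_tr0} to verify its hypotheses. So this really is a one-step composition of the two cited results, as the author indicates.

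First I would invoke Corollary~\ref{cor_per_tr0} (under the understanding that $A$ is nonelementary, which is what is actually used there; the elementary case would have an inner permutation action, incompatible with the simplicity of the crossed product asserted in the conclusion). That corollary delivers two things simultaneously: its first claim---established at the outset of its proof via \autoref{corrrsr} and \cite[Theorem~4.8]{HLX04}---is that $A^{\otimes m}$ is itself a simple separable C*-algebra of tracial rank zero; and the corollary itself states that the permutation action $\beta \colon S_m \to \Aut(A^{\otimes m})$ has the tracial Rokhlin property (the separability of $A$ promotes the weak tracial Rokhlin property to the tracial Rokhlin property via \autoref{wtr=tr}).

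With both hypotheses of Theorem~\ref{thmcross} in hand for the pair $(A^{\otimes m}, \beta)$---namely, $A^{\otimes m}$ is simple, separable, of tracial rank zero, and $\beta$ enjoys the (weak) tracial Rokhlin property---I would then apply that theorem directly to conclude that the crossed product $A^{\otimes m} \rtimes_\beta S_m$ is simple with tracial rank zero.

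I expect no serious obstacle, since all the substantive content is already packaged inside the two cited results. The only point requiring a moment of care is bookkeeping: one must apply Theorem~\ref{thmcross} to the tensor power $A^{\otimes m}$ (not to $A$ itself) together with the permutation action $\beta$, and correspondingly draw the upgrade from weak tracial Rokhlin to tracial Rokhlin from the separability of $A^{\otimes m}$, which is automatic from the separability of $A$.
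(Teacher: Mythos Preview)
Your proposal is correct and matches the paper's own argument exactly: the paper simply states that the corollary is immediate from \autoref{cor_per_tr0} and \autoref{thmcross}, and you have spelled out precisely how those two results compose. Your observation that the nonelementarity of $A$ is implicitly needed (since \autoref{cor_per_tr0} requires it and the elementary case would yield an inner action) is a valid side remark the paper leaves tacit.
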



\begin{thm}\label{thmclass1}
The  class of simple separable nuclear $\mathcal{Z}$-absorbing   C*-algebras 
is preserved under taking crossed products 
and fixed point algebras by
finite group actions with the tracial Rokhlin property.
\end{thm}

\begin{proof}
Let $\mathcal{C}$ denote the class of simple separable nuclear $\mathcal{Z}$-absorbing C*-algebras.
By  \cite[Corollary~3.2]{TW07},  $\mathcal{Z}$-stability is preserved under  Morita equivalence 
in the class of separable C*-algebras. Moreover, by  \cite[Theorem~3.15]{HRW07}, nuclearity is 
preserved under  Morita equivalence. Thus the class $\mathcal{C}$ 
satisfies \eqref{propcross_1} in Proposition~\ref{propcross}.
On the other hand,   \cite[Corollary~5.7]{Hirshberg} implies that
 the class $\mathcal{C}$ 
also satisfies \eqref{propcross_2} in Proposition~\ref{propcross}.
Therefore, by Proposition~\ref{propcross} the class  $\mathcal{C}$  
is preserved under taking crossed products 
by finite group actions of with the tracial Rokhlin property.
The corresponding result about  $A^{\alpha}$ holds  by noting that 
 $A^{\alpha}$ is a full corner of
 $A\rtimes _{\alpha} G$.
\end{proof}



\begin{rem}\label{rem_class}
There are some other classes of simple C*-algebras which are preserved
under  taking crossed products and fixed point algebras
by finite group actions  with the weak tracial Rokhlin property.
For example, the class of simple purely infinite C*-algebras
(by \cite[Theorem~3]{Je95}), the class of simple C*-algebras with Property~(SP)
(by the nonunital version of \cite[Theorem~4.2]{JO98}),
and the class of simple tracially $\mathcal{Z}$-absorbing
C*-algebras \cite{AGJP17}.  
\end{rem}

\appendix

\section{C*-algebras with  tracial rank zero}\label{app}

We begin this section with recalling the definition  of tracial rank
zero for C*-algebras from \cite{Lin01}. Then, we give a characterization of
 tracial rank zero which unifies the definitions
  for the simple unital   and simple nonunital 
cases (Theorem~\ref{main.trk}).
The main advantage of this definition is to avoid working with 
the unitization of simple C*-algebras. 
In particular, we are able to show that having tracial rank zero
is preserved under Morita equivalence in the class of 
 simple C*-algebras.

\subsection{Preliminaries}
In this subsection  we present some notation 
and results which will be used in the sequel. 
Also, the statement of the main theorem of
this appendix (Theorem~\ref{main.trk}) is given
at the end of this subsection. The proof 
of this theorem will be given after
 Lemma~\ref{lemmatrixk}.
\begin{nota}
We  recall some notation  from \cite{Lin01} 
 for the convenience
of the reader. We remark that, instead of notation 
$[a]\leq [b]$ used in \cite{Lin01},
we will adopt the notation
 $a\precsim_{s} b$ from
 \cite[Definition~2.1]{ORT11}.  

\begin{enumerate}
\item
We denote by $\mathcal{I}^{(0)}$ the class of all finite dimensional C*-algebras.

\item 
Let $\sigma_{1},\sigma_{2}$ be real numbers with $0 < \sigma_{1} < \sigma _{2} \leq 1$. 
Define a continuous function 
$f_{\sigma_{1}}^{\sigma_{2}}\colon[0,\infty)\to [0,1]$ by 
\[
f_{\sigma_{1}}^{\sigma_{2}}(t)=
\begin{cases}
0  & 0\leq t< \sigma_{1},\\
\mathrm{linear} &  \sigma_{1} \leq t < \sigma_{2},\\
1 & t\geq \sigma_{2}.
\end{cases}
\]

\item 
Let $a$ and $b$ be  positive elements in a C*-algebra $A$. 
We say that $a$ is \emph{Blackadar subequivalent} to
$b$ and we write $a \precsim_{s} b$ if there exists 
$x \in A$ such that $x^{*}x=a$ and $xx^{*} \in \overline{bAb}$.
Note that, $a \precsim_{s} b$ is equivalent to
the relation $[a]\leq [b]$ which is used in
  \cite[Definition~2.2]{Lin01}  
(see   \cite[Section~4]{ORT11}).
Let $n$ be a positive integer. We write $a \precsim_{s,n} b$ if there are $n$ mutually orthogonal positive elements 
$b_{1},\ldots,b_{n} \in \overline{bAb}$ such that 
$a \precsim_{s} b_{i}$ for all $i=1,\ldots,n$.

\end{enumerate}
\end{nota}

\begin{rem}\label{rmkik}
Observe that  if $p$ is a projection in a C*-algebra $A$ and $a \in A_{+}$,
then $p \precsim_{s} a$ if and only if $p \precsim_{A} a$
(see Lemma~\ref{lemmvn}). 
\end{rem}

\begin{df}[see \cite{Lin01},  Definition~3.1]\label{deftr}
A unital C*-algebra $A$ is said to have \emph{tracial rank zero} 
if for any $\varepsilon >0$, any finite subset $F\subseteq A$ containing a nonzero element $b \geq 0$, 
any $\sigma_{1},\sigma_{2},\sigma_{3},\sigma_{4}$ with
$0<\sigma_{3} < \sigma_{4} < \sigma_{1} <\sigma_{2} < 1$, and any integer $n >0$, 
there exist a nonzero projection $p \in A$ and a  finite dimensional
C*-subalgebra $E \subseteq A$   with $1_{B}=p$, such that 
\begin{enumerate}
\item\label{deftr_it1}
 $\|pa-ap\| < \varepsilon$ for all $a \in F$;
\item\label{deftr_it2} 
$pFp \subseteq _{\varepsilon} B	$;	
\item\label{deftr_it3} 
$1-p \precsim_{s,n} p$ and 
$f_{\sigma_{1}}^{\sigma_{2}}((1-p)b(1-p)) \precsim_{s,n} f_{\sigma_{3}}^{\sigma_{4}}(pbp)$.
\end{enumerate}
If $A$ has tracial rank zero, we will write $\mathrm{TR}(A)=0$. 
A nonunital C*-algebra $A$ is said to have $\mathrm{TR}(A)=0$ if 
$\mathrm{TR}(A^{\sim})=0$.
\end{df}
Note that in \cite{Lin01}, for any nonnegative integer $k$, the notion
of a C*-algebra with tracial rank $k$ is introduced.
Lin also introduced a weaker version of the tracial rank zero  as follows.

\begin{df}[see \cite{Lin01}, Definition~3.4]\label{deftrw}
Let $A$ be a unital C*-algebra. We write $\mathrm{TR}_{w}(A)=0$ if for any $\varepsilon>0$, 
any finite subset $F\subseteq A$ containing a nonzero element $b \geq0$, any integer $n >0$, and any full element 
$x \in A_{+}$, there exist a nonzero projection $p \in A$ and a  
finite dimensional
C*-subalgebra $E \subseteq A$
with $1_{B}=p$, such that 
\begin{itemize}
\item[(1)]
 $\|pa-ap\| < \varepsilon$ for all $a \in F$;
\item[(2)]
$pFp \subseteq _{\varepsilon} B$ and $\|pbp\| \geq \|b\|-\varepsilon$;	
\item [(3)]
$ 1-p  \precsim_{s,n} p$ and  $1-p \precsim_{s} x$.
\end{itemize}
\end{df}

If $A$ is nonunital we write
$\mathrm{TR}_{w}(A)=0$ if $ \mathrm{TR}_{w}(A^{\sim})=0$.
 Observe that for any C*-algebra $A$,
$\mathrm{TR}_{w}(A)=0$ if and only if $A$ is TAF in the sense of \cite{Lin01c}.

\begin{rm}\label{rmkpos}
Note that in  Definition~\ref{deftrw}, we may omit the assumption that $b$ is positive.  
In fact, if $b$ is not positive we may assume
that $\|b\|=1$ and then use $b^{*}b$ instead of $b$.
\end{rm}

The following theorem follows from \cite[Theorem~6.13 and Remark~6.12]{Lin01}.

\begin{thm}\label{thmtr}
 Let $A$ be a simple  unital C*-algebra. Then  the following
statements are equivalent:
\begin{itemize}
\item[(a)]
$\mathrm{TR}(A)=0$;
\item[(b)]
$\mathrm{TR}_{w}(A)=0$;
\item[(c)]
for every finite set $F \subseteq A$, every $\varepsilon>0$, and every nonzero positive element
$x\in A$, there is  a nonzero C*-subalgebra
$B\subseteq A$ with $B\in\mathcal{I}^{(0)}$  such that, with $p=1_{B}$,  the following hold: 
\begin{itemize}
\item[(1)]
$\|pa-ap\|<\varepsilon$ for all $a\in F$;
\item[(2)]
$pFp\subseteq_{\varepsilon} B$;
\item[(3)]
$1-p\precsim_{A} x$.
\end{itemize}
\end{itemize}
Moreover, $\mathrm{TR}_{w}(A)= 0$ if and only if $\mathrm{TR}(A)= 0$.
\end{thm}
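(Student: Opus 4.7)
The plan is to establish the cycle of implications (a) $\Rightarrow$ (b) $\Rightarrow$ (c) $\Rightarrow$ (a). Once this cycle is in place, the ``moreover'' statement follows immediately because each implication preserves the parameter $k$: for every nonnegative integer $k$ we have $\mathrm{TR}(A) \leq k \Leftrightarrow \mathrm{TR}_w(A) \leq k$, whence $\mathrm{TR}(A) = \mathrm{TR}_w(A)$.

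The implication (b) $\Rightarrow$ (c) is essentially a specialization. Given data $F$, $\varepsilon$, and a nonzero positive $x \in A$ as in (c), I would take $n = 1$ in \autoref{deftrw} and use $x$ itself as the full positive element (full because $A$ is simple), after enlarging $F$ if necessary to contain a nonzero positive element (allowed by \autoref{rmkpos}). Conditions (1) and (2) of (c) are read off directly, and the conclusion $1-p \precsim_{s} x$ coincides with $1-p \precsim_A x$ by the identification recorded in \autoref{rmkik}. For (a) $\Rightarrow$ (b), starting from data $F$ (with nonzero positive $b$), $\varepsilon$, integer $n$, and full $x$ as in \autoref{deftrw}, I would choose thresholds $0 < \sigma_3 < \sigma_4 < \sigma_1 < \sigma_2 < 1$ and apply \autoref{deftr}. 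Conditions (1)--(2) and $1-p \precsim_{s,n} p$ transfer immediately, while the norm condition $\|pbp\| \geq \|b\| - \varepsilon$ is arranged by tightening the input $\varepsilon$. The delicate step is to derive $1-p \precsim_{s} x$ from the function inequality $f_{\sigma_1}^{\sigma_2}((1-p)b(1-p)) \precsim_{s,n} f_{\sigma_3}^{\sigma_4}(pbp)$: one uses that $f_{\sigma_1}^{\sigma_2}((1-p)b(1-p))$ Cuntz-dominates $1-p$ when $b$ is normalized so that $\|(1-p)b(1-p)\|$ is close to $\|b\|$, and that simplicity together with fullness of $x$ yields $f_{\sigma_3}^{\sigma_4}(pbp) \precsim_A x$ after chaining finitely many copies.

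The substantive direction is (c) $\Rightarrow$ (a). Given the data of \autoref{deftr} --- $F$ (with nonzero positive $b$), $\varepsilon$, thresholds $\sigma_i$, and integer $n$ --- I would first use simplicity of $A$ to produce a nonzero positive $x_0 \in A$ satisfying $x_0 \otimes 1_{n+1} \precsim_A f_{\sigma_3}^{\sigma_4}(b)$; this rests on an auxiliary existence lemma valid in any simple unital C*-algebra, e.g.\ \cite[Lemma~2.1]{Ph14}. Applying (c) with input $F \cup \{b\}$, a suitably shrunk $\varepsilon$, and $x = x_0$ produces a projection $p$, a subalgebra $B \in \mathcal{I}^{(k)}$ with $1_B = p$, and $1-p \precsim_A x_0$. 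Consequently $1-p$ admits $n+1$ orthogonal ``copies'' inside $f_{\sigma_3}^{\sigma_4}(pbp)$, modulo the perturbation arising from $\|pb - bp\| < \varepsilon$. Absorbing one copy into $p$ itself (valid since $f_{\sigma_3}^{\sigma_4}(pbp) \leq p$) yields $1-p \precsim_{s,n} p$, and the remaining $n$ copies produce the function inequality $f_{\sigma_1}^{\sigma_2}((1-p)b(1-p)) \precsim_{s,n} f_{\sigma_3}^{\sigma_4}(pbp)$ via the bound $f_{\sigma_1}^{\sigma_2}((1-p)b(1-p)) \precsim_A 1-p$.

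The hard part will be the bookkeeping in this last step: carefully tracking how the cutoffs $f_{\sigma_i}^{\sigma_j}$ behave under the compression $b \mapsto pbp$ and under small perturbations controlled by $\varepsilon$ and by $\|pb - bp\|$, so that $f_{\sigma_3}^{\sigma_4}(pbp)$ indeed serves as a reliable proxy for $p\, f_{\sigma_3}^{\sigma_4}(b)\, p$ and absorbs the required number of orthogonal copies of $x_0$. The key tool throughout is simplicity of $A$, which provides both the abundance of subequivalences needed to construct $x_0$ in (c) $\Rightarrow$ (a) and the fullness used to chain comparisons in (a) $\Rightarrow$ (b).
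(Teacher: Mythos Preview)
The paper does not give its own proof of this theorem; it simply records that the statement ``follows from \cite[Theorem~6.13 and Remark~6.12]{Lin01}.'' So there is no argument in the paper to compare your sketch against --- your cycle (a) $\Rightarrow$ (b) $\Rightarrow$ (c) $\Rightarrow$ (a) is an attempt to reconstruct Lin's work rather than anything the present paper does.

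That said, your sketch of (a) $\Rightarrow$ (b) contains a genuine gap. You claim that $f_{\sigma_1}^{\sigma_2}((1-p)b(1-p))$ Cuntz-dominates $1-p$ once $\|(1-p)b(1-p)\|$ is close to $\|b\|$. This is false: the element $f_{\sigma_1}^{\sigma_2}((1-p)b(1-p))$ lies in the hereditary subalgebra generated by $(1-p)b(1-p)$, so it is always Cuntz \emph{below} $1-p$, but the reverse need not hold --- take $b$ with small support. Similarly, ``simplicity together with fullness of $x$ yields $f_{\sigma_3}^{\sigma_4}(pbp) \precsim_A x$'' is not valid: fullness of $x$ in a simple algebra gives at best $a \precsim_A x \otimes 1_m$ for some $m$, not $a \precsim_A x$. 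Lin's actual route from (a) to (b) passes through intermediate structural results for simple algebras with $\mathrm{TR} \leq k$ (comparison properties, strict comparison of projections in corners), not through a direct manipulation of the function inequality in \autoref{deftr}. Your outline of (c) $\Rightarrow$ (a) is closer to viable, and you correctly flag the perturbation estimates for $f_{\sigma_i}^{\sigma_j}$ under the compression $b \mapsto pbp$ as the crux --- but that bookkeeping is exactly the nontrivial content Lin supplies, and it is more delicate than your sketch suggests.
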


\noindent

The following is the main result
of the appendix.

\begin{thm}\label{main.trk}
Let $A$ be a simple   C*-algebra. 
Then $A$ has tracial rank zero if and only if
it has an approximate identity (not necessarily increasing)  
consisting of projections and
 for any finite set $F \subseteq A$, 
any $ x, y \in A_{+}$ with $x \neq 0$, and any $\varepsilon >0$, 
there exists a finite dimensional C*-sublagebra $E \subseteq A$  such that,
with  $p=1_{E}$, the following hold:
\begin{itemize}
\item[(1)] $\|pa-ap\| < \varepsilon$ for all $a \in F$;
\item[(2)] $pFp \subseteq _{\varepsilon} E$;
\item[(3)] $(y^{2} -ypy -\varepsilon)_{+} \precsim_{A} x$;
\item[(4)] $\|pxp\| > \|x\| -\varepsilon$.
\end{itemize}

\end{thm}

The proof of this theorem needs some preparation and will be presented
after Lemma~\ref{lemmatrixk}.
Note that Theorem~\ref{main.trk} unifies the definitions of tracial 
rank zero for simple unital and simple nonunital  cases. 

\subsection{C*-algebras with Property~$(\mathrm{T}_{0})$}
To  prove  Theorem~\ref{main.trk},  in this subsection we define  Property~$(\mathrm{T}_{0})$ and 
study  some of its properties.

\begin{df}\label{deftk}
Let $A$ be a simple  C*-algebra. 
We say that $A$ has \emph{Property}~$(\mathrm{T}_{0})$ 
 if $A$ has an approximate identity (not necessarily increasing) consisting of projections and
for all positive elements $x,y\in A$ with $x\neq 0$, every finite set $F \subseteq A$, and
every $\varepsilon>0$,  there is  a 
finite  dimensional C*-subalgebra
$E\subseteq A$  such that, with $p=1_{E}$,  the following hold: 
\begin{enumerate}
\item\label{deftk_it1}
$\|pa-ap\|<\varepsilon$ for all $a\in F$;
\item\label{deftk_it2}
$pFp\subseteq_{\varepsilon}E$;
\item\label{deftk_it3}
$(y^{2}-ypy-\varepsilon)_{+} \precsim_{A} x$;
\item\label{deftk_it4}
$\|pxp\|>\|x\|-\varepsilon$.
\end{enumerate}
\end{df}

We need the following lemma in the sequel.
The proof is very similar to that of  Lemma~\ref{lemdefwtrp}
and so it is omitted.

\begin{lma}\label{lemmkey2}
Let $A$ be a C*-algebra and let $x \in A_{+}\setminus \{0\}$. 
Suppose that $y \in A_{+}$ has the following property. 
For any finite set $F \subseteq A$ and  any $ \varepsilon > 0$ 
there exist a projection $p \in A$ and a
finite  dimensional C*-subalgebra $E \subseteq A$  
with unit $p$ such that the following hold:
\begin{enumerate}
\item\label{lemmkey2_it1}
 $\|pa-ap\| < \varepsilon$ for all $a \in F$;
\item\label{lemmkey2_it2}
$pFp\subseteq_{\varepsilon}E$;
\item\label{lemmkey2_it3}
$(y^{2}-ypy-\varepsilon)_{+} \precsim_{A} x$;
\item\label{lemmkey2_it4}
$\|pxp\|>\|x\|-\varepsilon$.
\end{enumerate}
Then every positive element $z\in\overline{Ay}$ also has the same property.
\end{lma}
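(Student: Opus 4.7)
The plan is to mimic the argument of \autoref{lemdefwtrp}, transferring the property from $y$ to $z \in \overline{Ay}$ by approximating $z$ by an element of the form $wy$ and then pulling back the Cuntz subequivalence estimates via \autoref{lemmkey} and \autoref{lemkr}. Conditions~\eqref{lemmkey2_it1}, \eqref{lemmkey2_it2}, and \eqref{lemmkey2_it4} (which concern only $F$ and $x$, not $y$) will transfer automatically once the hypothesis on $y$ is invoked with a sufficiently small tolerance; the only genuine work lies in deducing Condition~\eqref{lemmkey2_it3} for $z$ from the corresponding condition for $y$.

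First I would fix a positive $z \in \overline{Ay}$, a finite set $F \subseteq A$, and $\varepsilon > 0$. Choose $\delta > 0$ with $\delta < \min\{1,\,\varepsilon / (4(2\|z\|+1))\}$, and use $z \in \overline{Ay}$ to pick $w \in A$ with $\|z - wy\| < \delta$; since $z$ and $y$ are self-adjoint this also gives $\|z - yw^{*}\| < \delta$. Next choose $\eta$ with $0 < \eta < \min\{\varepsilon,\,\varepsilon / (2\|w\|^{2})\}$ and apply the hypothesis on $y$ to the finite set $F$ with $\eta$ in place of $\varepsilon$. This produces a projection $p \in A$ and a C*-subalgebra $E \subseteq A$ in $\mathcal{I}^{(k)}$ with $1_{E} = p$, satisfying \eqref{lemmkey2_it1}, \eqref{lemmkey2_it2}, and \eqref{lemmkey2_it4} with $\eta$ in place of $\varepsilon$, and in addition $(y^{2} - ypy - \eta)_{+} \precsim_{A} x$.

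Since $\eta < \varepsilon$, Conditions~\eqref{lemmkey2_it1}, \eqref{lemmkey2_it2}, and \eqref{lemmkey2_it4} for $z$ are immediate. For \eqref{lemmkey2_it3}, \autoref{lemmkey} gives
\[
(wy^{2}w^{*} - wypyw^{*} - \varepsilon/2)_{+} \precsim_{A} w\bigl(y^{2} - ypy - \varepsilon/(2\|w\|^{2})\bigr)_{+}w^{*} \precsim_{A} (y^{2} - ypy - \eta)_{+} \precsim_{A} x.
\]
On the other hand, standard estimates using $\|z - wy\| < \delta$ and $\|z - yw^{*}\| < \delta$, together with the choice of $\delta$, yield
\[
\bigl\|(z^{2} - zpz) - (wy^{2}w^{*} - wypyw^{*} - \varepsilon/2)_{+}\bigr\| < \varepsilon,
\]
and \autoref{lemkr} then delivers $(z^{2} - zpz - \varepsilon)_{+} \precsim_{A} x$, as desired.

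The main (and only) technical obstacle is the bookkeeping of approximation errors in the displayed norm estimate: one decomposes $\|z^{2} - wy^{2}w^{*}\|$ and $\|zpz - wypyw^{*}\|$ into telescoping differences and bounds each piece by a linear expression in $\delta$, $\|z\|$, and $\|w\|$. This is entirely parallel to the corresponding step in the proof of \autoref{lemdefwtrp}, so no new ideas are needed; the presence of the projection $p$ rather than a Rokhlin sum $f$ makes no difference to the estimates.
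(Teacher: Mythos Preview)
Your proposal is correct and follows exactly the approach the paper intends: the paper omits the proof of this lemma, stating only that it is ``very similar to that of \autoref{lemdefwtrp},'' and your argument is precisely that adaptation, with the observation that Conditions~\eqref{lemmkey2_it1}, \eqref{lemmkey2_it2}, \eqref{lemmkey2_it4} do not involve $y$ and hence transfer trivially, while Condition~\eqref{lemmkey2_it3} is handled via \autoref{lemmkey} and \autoref{lemkr} exactly as in \autoref{lemdefwtrp}. The only cosmetic point is to ensure $w \neq 0$ (needed for $\varepsilon/(2\|w\|^{2})$ to make sense), which is harmless since the case $z = 0$ is trivial and otherwise one may take $\delta < \|z\|$.
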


The following proposition shows the relation between Property~$(\mathrm{T}_{0})$ and  
tracial rank zero for simple  unital C*-algebras.

\begin{prop}\label{propequstaf}
Let $A$ be a simple  unital C*-algebra.
The following statements are equivalent:
\begin{enumerate}
\item\label{propequstaf_it1} 
$A$ is has Property~$(\mathrm{T}_{0})$;
\item\label{propequstaf_it2}
 $\mathrm{TR}(A)=0$;
\item\label{propequstaf_it3}
 $\mathrm{TR}_{w}(A)=0$;
\item\label{propequstaf_it4}
 for any $x,y,\varepsilon,F$ as in Definition~\ref{deftk}
there is  a nonzero 
finite dimensional C*-subalgebra
$E\subseteq A$  such that \eqref{deftk_it1},
\eqref{deftk_it2}, and \eqref{deftk_it3}   in 
Definition~\ref{deftk} hold.
\end{enumerate}
\end{prop}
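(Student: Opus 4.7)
My strategy is to establish the cycle $(1) \Rightarrow (4) \Rightarrow (2) \Leftrightarrow (3) \Rightarrow (1)$. The equivalence $(2) \Leftrightarrow (3)$ is already given by \autoref{thmtr}, so only the remaining three implications need verification; each of them will be a short reduction, reducing everything to a single invocation of \autoref{thmtr} or of the definition of $\mathrm{TR}_{w}$.

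The implication $(1) \Rightarrow (4)$ is immediate since $(4)$ drops condition \eqref{deftk_it4} from $(1)$. For $(4) \Rightarrow (2)$: given $F$, $\varepsilon \in (0,1)$, and a nonzero $x \in A_{+}$, I will apply $(4)$ with $y = 1$ (using that $A$ is unital). This produces a nonzero projection $p$ and a C*-subalgebra $E \in \mathcal{I}^{(k)}$ with $1_{E} = p$ satisfying the commutation and approximation conditions together with $(1 - p - \varepsilon)_{+} \precsim_{A} x$. Since $1-p$ is a projection, functional calculus gives $(1 - p - \varepsilon)_{+} = (1-\varepsilon)(1-p)$, and scale invariance of the Cuntz relation then yields $1 - p \precsim_{A} x$. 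This is precisely condition~(c) of \autoref{thmtr}, so $\mathrm{TR}(A) \leq k$.

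For $(3) \Rightarrow (1)$: given $x, y, F, \varepsilon$ as in \autoref{deftk}, I pick $\delta \in (0, \varepsilon)$ and invoke the definition of $\mathrm{TR}_{w}(A) \leq k$ (\autoref{deftrw}) with finite set $F' = F \cup \{x, y\}$, distinguished positive element $b = x$, tolerance $\delta$, and auxiliary full element $x$ itself (full because $A$ is simple unital). The output is a nonzero projection $p$ and a C*-subalgebra $E \in \mathcal{I}^{(k)}$ with $1_{E} = p$ such that $\|[p,a]\| < \delta$ for all $a \in F'$, $pF'p \subseteq_{\delta} E$, $\|pxp\| \geq \|x\| - \delta$, and $1 - p \precsim_{A} x$. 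Conditions \eqref{deftk_it1}, \eqref{deftk_it2}, and \eqref{deftk_it4} of \autoref{deftk} then follow directly. For \eqref{deftk_it3} I will use the algebraic identity $y^{2} - ypy = y(1-p)y$ together with the Cuntz estimate
\[
y(1-p)y = \bigl((1-p)^{1/2} y\bigr)^{*}\bigl((1-p)^{1/2}y\bigr) \sim_{A} \bigl((1-p)^{1/2}y\bigr)\bigl((1-p)^{1/2}y\bigr)^{*} \leq \|y\|^{2}(1-p),
\]
which gives $y(1-p)y \precsim_{A} 1 - p \precsim_{A} x$, and hence $(y^{2} - ypy - \varepsilon)_{+} \precsim_{A} x$.

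The whole proof is essentially routine once \autoref{thmtr} and the definition of $\mathrm{TR}_{w}$ are at hand, and no step presents a serious obstacle. The one conceptual point worth emphasizing is that the $y$-dependence in \eqref{deftk_it3} collapses via the identity $y^{2} - ypy = y(1-p)y$ and the Cuntz estimate above, so the substantive content of \eqref{deftk_it3} is simply $1-p \precsim_{A} x$; this is precisely what lets Property~$(\mathrm{T}_{k})$ coincide with the classical unital tracial rank notions.
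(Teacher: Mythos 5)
Your proof is correct, and its overall skeleton (cycling through the implications and reducing everything to \autoref{thmtr} and the definition of $\mathrm{TR}_{w}$) matches the paper's. The one place where you genuinely diverge is the implication $(3)\Rightarrow(1)$: the paper first verifies Property~$(\mathrm{T}_{k})$ only for $y=1$ (exactly as you do, via $(1-p-\varepsilon)_{+}=(1-\varepsilon)(1-p)\sim_{A}1-p$) and then invokes \autoref{lemmkey2} with $\overline{A\cdot 1}=A$ to upgrade to arbitrary $y\in A_{+}$, whereas you dispense with that lemma entirely by observing $y^{2}-ypy=y(1-p)y=\bigl((1-p)y\bigr)^{*}\bigl((1-p)y\bigr)\sim_{A}(1-p)y^{2}(1-p)\leq\|y\|^{2}(1-p)\precsim_{A}1-p\precsim_{A}x$. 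Your route is more self-contained and makes transparent why, in the unital case, the $y$-dependence in condition~\eqref{deftk_it3} is vacuous; the paper's route via \autoref{lemmkey2} is less direct here but is the mechanism that survives in the nonunital setting, where no unit is available to substitute for $y$ and the perturbation lemma is genuinely needed. Both arguments are complete and valid.
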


\begin{proof}
By  Theorem~\ref{thmtr} we have \eqref{propequstaf_it2}$\Leftrightarrow$\eqref{propequstaf_it3}.
The implication \eqref{propequstaf_it1}$ \Rightarrow $\eqref{propequstaf_it4} is obvious. 
Moreover, \eqref{propequstaf_it4}$ \Rightarrow $\eqref{propequstaf_it3} 
follows from Theorem~\ref{thmtr}  by applying \eqref{propequstaf_it4} with $y=1$ and using the fact that
for any positive  number $  \varepsilon <1$,
\begin{equation}\label{equstaf}
(1-p-\varepsilon)_{+}=(1-\varepsilon)(1-p)\sim_{A} (1-p).
\end{equation}
To see \eqref{propequstaf_it3}$ \Rightarrow $\eqref{propequstaf_it1}, 
note that \eqref{propequstaf_it3}  together with \eqref{equstaf} imply that
 Definition~\ref{deftk} is satisfied for $y=1$. Now by  Lemma~\ref{lemmkey2}, 
  Definition~\ref{deftk} is satisfied for every $y\in A_{+}$.
 Therefore, \eqref{propequstaf_it1}  holds.
\end{proof}

We need the following lemma in the proof of Proposition~\ref{propstafnu}.

\begin{lma}\label{lemheru}
Let $A$ be a simple C*-algebra with Property~$(\mathrm{T}_{0})$.
 Then every unital hereditary C*-subalgebra
of $A$ also has Property~$(\mathrm{T}_{0})$.
\end{lma}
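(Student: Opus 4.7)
The plan is to apply Property~$(\mathrm{T}_k)$ of $A$ to a slightly enlarged data set and then conjugate the resulting finite-dimensional-like subalgebra of $A$ into one sitting inside $B$. Write $q = 1_B$, so that $B = qAq$ and $\{q\}$ itself is an approximate identity of projections for $B$. Fix $F \subseteq B$, positive elements $x, y \in B$ with $x \neq 0$, and $\varepsilon > 0$; choose $\varepsilon' > 0$ (to be tuned in terms of $\varepsilon$, $\|x\|$, $\|y\|$, and the Lin perturbation constant). Applying Property~$(\mathrm{T}_k)$ of $A$ to $F \cup \{q\}$, $x$, $y$ and $\varepsilon'$ yields a C*-subalgebra $E' \subseteq A$ in $\mathcal{I}^{(k)}$ with $p' = 1_{E'}$ such that $\|p'c - cp'\| < \varepsilon'$ for $c \in F \cup \{q\}$, $p'(F \cup \{q\})p' \subseteq_{\varepsilon'} E'$, $(y^2 - yp'y - \varepsilon')_+ \precsim_A x$, and $\|p'xp'\| > \|x\| - \varepsilon'$.

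Next I would produce the data needed to transfer back to $B$. From $\|p'q - qp'\| < \varepsilon'$ one computes $(qp'q)^2 = qp'qp'q \approx qp'q$ within $O(\varepsilon')$, so $qp'q \in B$ is an almost-projection; applying continuous functional calculus inside $B$ yields a projection $p_0 \in C^*(qp'q) \subseteq B$ with $\|p_0 - qp'q\| = O(\varepsilon')$. Similarly, since there exists $c \in E'$ with $\|p'qp' - c\| < \varepsilon'$ (by condition~\eqref{deftk_it2} for $A$) and $p'qp' \approx qp'q$ (using $p'q \approx qp'$), Lin's perturbation lemma (\cite[Lemmas~2.5.1 and 2.5.4]{Lin.book}) gives a projection $e \in E'$ with $\|e - qp'q\| = O(\varepsilon')$, hence $\|e - p_0\| = O(\varepsilon')$; the standard comparison of close projections then furnishes a unitary $u \in A^\sim$ with $ueu^* = p_0$ and $\|u - 1\| = O(\varepsilon')$. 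Setting $E := u(eE'e)u^*$, the corner $eE'e$ lies in $\mathcal{I}^{(k)}$ (a corner of an $\mathcal{I}^{(k)}$-algebra by a unital projection), so $E \in \mathcal{I}^{(k)}$ with $1_E = p_0$; because $p_0 \leq q$, we get $E \subseteq p_0 A p_0 \subseteq qAq = B$.

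It then remains to verify \eqref{deftk_it1}--\eqref{deftk_it4} of \autoref{deftk} for $B$ with the data $(E, p_0, \varepsilon)$. For \eqref{deftk_it1}, using $qa = aq = a$ for $a \in F$ together with $\|p'a - ap'\|, \|p'q - qp'\| < \varepsilon'$, one computes $qp'q \cdot a - a \cdot qp'q = qp'a - ap'q = O(\varepsilon')$, and $\|p_0 - qp'q\| = O(\varepsilon')$ yields $\|p_0 a - ap_0\| < \varepsilon$. For \eqref{deftk_it2}, $ep' = p'e = e$ gives $eae = e(p'ap')e \in_{\varepsilon'} eE'e$, whence $p_0 a p_0 = ue(u^*au)eu^* \approx ueaeu^* \in_{\varepsilon'} E$ (absorbing $\|u - 1\|$ small). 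For \eqref{deftk_it3}, since $yq = qy = y$ we have $y(qp'q)y = yp'y$ exactly, hence $yp_0 y \approx yp'y$; an application of \autoref{lemkr} then gives $(y^2 - yp_0 y - \varepsilon)_+ \precsim_A (y^2 - yp'y - \varepsilon')_+ \precsim_A x$, and Cuntz subequivalence in $A$ of elements of $B$ coincides with Cuntz subequivalence in $B$ because $B$ is hereditary. For \eqref{deftk_it4}, a parallel computation using $qxq = x$ shows $\|p'xp' - qp'xp'q\| = O(\varepsilon'\|x\|)$, so $\|p_0 x p_0\| \geq \|qp'xp'q\| - O(\varepsilon') \geq \|p'xp'\| - O(\varepsilon'\|x\|) > \|x\| - \varepsilon$ for $\varepsilon'$ sufficiently small.

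The main technical obstacle is the transfer step itself: the approximating subalgebra $E'$ delivered by Property~$(\mathrm{T}_k)$ of $A$ does not a priori lie in $B$. This is resolved by the combined use of approximate commutation with $q$ (which simultaneously makes $qp'q$ an almost-projection in $B$ and close to an element of $E'$) and Lin's perturbation-and-conjugation lemma, which produces the unitary $u \in A^\sim$ near $1$ carrying a projection of $E'$ onto a genuine projection of $B$; everything else is a careful but routine tracking of the error terms.
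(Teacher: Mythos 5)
Your proposal is correct and follows essentially the same route as the paper's proof: apply Property~$(\mathrm{T}_k)$ of $A$ to the enlarged set $F\cup\{q\}$, observe that $qpq$ is simultaneously an almost-projection in $B$ and close to an element of $E$, perturb to genuine projections on both sides, conjugate by a unitary near $1$ (Lin's Lemmas~2.5.1, 2.5.4, 2.5.5), and take the corner $eEe$, which stays in $\mathcal{I}^{(k)}$. The subsequent error-tracking for conditions \eqref{deftk_it1}--\eqref{deftk_it4} matches the paper's estimates step for step.
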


\begin{proof}
Let $B=qAq$ be a unital hereditary C*-subalgebra of $A$ where $q$ is a projection of $A$.
Let $F\subseteq B$ be a finite subset, let $x,y\in B_{+}$ with $x\neq 0$, and 
let $\varepsilon>0$.
We may assume that $F\cup\{x,y\}$ is contained in the closed unit ball of $B$. 
Put $G=F\cup\{q\}$. Choose $\delta>0$ with $\delta<\min\{\frac{1}{6}, \frac{\varepsilon}{43}\}$.
Since
$A$ has Property~$(\mathrm{T}_{0})$,  there is a subalgebra $E\subseteq A$ in $\mathcal{I}^{(0)}$
such that, with $p=1_{E}$,  the following hold: 
\begin{enumerate}
\item\label{lemheru_it1}
$\|pa-ap\|<\delta$ for all $a\in F$;
\item\label{lemheru_it2}
$pGp\subseteq_{\delta}E$;
\item\label{lemheru_it3}
$(y^{2}-ypy-\delta)_{+} \precsim_{A} x$;
\item\label{lemheru_it4}
$\|pxp\|>\|x\|-\delta$.

\setcounter{TmpEnumi}{\value{enumi}}
\end{enumerate}
Then by \eqref{lemheru_it1} we have
\[
\|(qpq)^{2}-qpq\|=\|qpqpq-qppq\|\leq \|qpq-pq\|<\delta.
\]
Thus by \cite[Lemma~2.5.5]{Lin.book} 
(note that the assumption $\|a\|\geq \frac{1}{2}$ is 
unnecessary in the statement of that lemma),
there is a projection $q_{1}\in B$ such that:
\begin{enumerate}
\setcounter{enumi}{\value{TmpEnumi}}

\item\label{lemheru_it5}
$\|q_{1}-qpq\|<2\delta$.

\setcounter{TmpEnumi}{\value{enumi}}
\end{enumerate}
By \eqref{lemheru_it2} there is $c\in E$ such that $\|qpq-c\|<\delta$. Then $\|q_{1}-c\|<3\delta$. 
Thus by \cite[Lemma~2.5.4]{Lin.book} (note that the assumption that $a$ is self-adjoint is 
unnecessary in the statement of that lemma),
there is a projection $e\in E$ such that:
\begin{enumerate}
\setcounter{enumi}{\value{TmpEnumi}}

\item\label{lemheru_it6} 
$\|q_{1}-e\|<6\delta$.

\setcounter{TmpEnumi}{\value{enumi}}
\end{enumerate}
Hence by \cite[Lemma~2.5.1]{Lin.book}, there is a unitary $u\in A^{\sim}$ such that:
\begin{enumerate}
\setcounter{enumi}{\value{TmpEnumi}}

\item\label{lemheru_it7} 
$u^{*}eu=q_{1}$ and $\|u-1_{A^{\sim}}\|<12\delta$.

\setcounter{TmpEnumi}{\value{enumi}}
\end{enumerate}
Put $D=u^{*}eEeu$. Then $D$ is in $\mathcal{I}^{(0)}$ and 
$D=q_{1}u^{*}Euq_{1}\subseteq qAq=B$. Also,  $1_{D}=u^{*}eu=q_{1}$.
We show that:
\begin{enumerate}
\setcounter{enumi}{\value{TmpEnumi}}

\item\label{lemheru_it8}
 $\|q_{1}a-aq_{1}\| < \varepsilon$ for all $a \in F$;
 
\item\label{lemheru_it9}
$q_{1}Fq_{1} \subseteq _{\varepsilon} D$;	

\item\label{lemheru_it10}
$(y^{2}-yq_{1}y-\varepsilon)_{+} \precsim_{B} x$;

\item\label{lemheru_it11}
$\|q_{1}xq_{1}\|>\|x\|-\varepsilon$.

\setcounter{TmpEnumi}{\value{enumi}}
\end{enumerate}
By \eqref{lemheru_it1} and \eqref{lemheru_it5} we have 
$\|q_{1}-pq\|\leq \|q_{1}-qpq\|+\|qpq-pq\|<3\delta$. Thus,

\begin{enumerate}
\setcounter{enumi}{\value{TmpEnumi}}

\item\label{lemheru_it12}
 $\|q_{1}-pq\|=\|q_{1}-qp\|<3\delta$.

\end{enumerate}
To see \eqref{lemheru_it8}, by \eqref{lemheru_it1} and 
\eqref{lemheru_it12} for all $a\in F$ we have
\[
\|q_{1}a-aq_{1}\|\leq \|q_{1}a-pqa\|+\|pa-ap\|+\|aqp-aq_{1}\|<7\delta<\varepsilon.
\]
To show \eqref{lemheru_it9} let $a\in F$. By \eqref{lemheru_it2} 
there is $b\in E$ such that $\|pap-b\|<\delta$. Put
$d=u^{*}ebeu\in D$. Then by \eqref{lemheru_it6}, 
\eqref{lemheru_it7}, and \eqref{lemheru_it12} we have:
\begin{align*}
\|q_{1}aq_{1}-d\|&\leq \|q_{1}aq_{1}-eq_{1}aq_{1}e\|
+\|eq_{1}aq_{1}e-epqaqpe\| \\
&\ \ \ +\|epape-ebe\|+\|ebe-d\|\\
&<12\delta+6\delta+\delta+24\delta=43\delta<\varepsilon.
\end{align*}
To see \eqref{lemheru_it11}, by \eqref{lemheru_it12} 
at the first step and by \eqref{lemheru_it4} at the third step we get
\[
\|q_{1}xq_{1}\|\geq \|pqxqp\|-6\delta =\|pxp\|-6\delta>\|x\|-7\delta>\|x\|-\varepsilon.
\]
To prove \eqref{lemheru_it10}, first by \eqref{lemheru_it12} we have
\begin{align*}
\|(y^{2}-ypy-\delta)_{+}-(y^{2}-yq_{1}y)\|&\leq \delta+
\|(y^{2}-ypy)-(y^{2}-yq_{1}y)\|\\
&=\delta+\|ypqy-yq_{1}y\| <4\delta<\varepsilon.
\end{align*}
Therefore, by Lemma~\ref{lemkr}, we get
$(y^{2}-yq_{1}y-\varepsilon)_{+} \precsim_{A}(y^{2}-ypy-\delta)_{+} \precsim_{A} x$.
Since $B$ is hereditary in $A$, we obtain 
$(y^{2}-yq_{1}y-\varepsilon)_{+}   \precsim_{B} x$.
This completes the proof of \eqref{lemheru_it8}--\eqref{lemheru_it11},
showing that $B$ has Property~$(\mathrm{T}_{0})$.
\end{proof}

To compare Property~$(\mathrm{T}_{0})$ with   tracial rank zero 
for simple  not necessarily unital C*-algebras, we need the following result.

\begin{prop}\label{propstafnu}
Let $A$ be a simple  nonunital C*-algebra.
Then 
 $A$ has Property~$(\mathrm{T}_{0})$ if and only if the following holds. For every $\varepsilon>0$,
every $n\in \mathbb{N}$, every nonzero positive element $x\in A^{\sim}$,
 every finite subset $F\subseteq A^{\sim}$ which contains a nonzero positive element $x_{1}$,
 and every $\sigma_{i}$, $1\leq i\leq 4$, with
 $0<\sigma_{3}<\sigma_{4}<\sigma_{1}<\sigma_{2}<1$,
there exists a finite dimensional C*-subalgebra $E\subseteq A$ 
 such that, with $p=1_{E}$,
the following hold:
\begin{enumerate}
 
\item\label{propstafnu_it1} 
$\|pa-ap\|<\varepsilon$ for all $a\in F$;
\item\label{propstafnu_it2} 
$pFp\subseteq_{\varepsilon} E$ and $\|px_{1}p\|\geq \|x_{1}\|-\varepsilon$;
\item\label{propstafnu_it3} 
$1-p\precsim_{A} x$  and $ 1-p \precsim_{s,n} p$;
\item\label{propstafnu_it4} 
$ f_{\sigma_{1}}^{\sigma_{2}}((1-p)x_{1}(1-p))
\precsim_{s,n} f_{\sigma_{3}}^{\sigma_{4}}(px_{1}p)$.

\setcounter{TmpEnumi}{\value{enumi}}
\end{enumerate}
\end{prop}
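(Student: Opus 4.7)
My focus is the forward direction; the converse for $k = 0$ is handled at the end. The strategy is to reduce the setup in $A^{\sim}$ to an application of Property~$(\mathrm{T}_{k})$ inside $A$ by splitting off the scalar parts of the given data.

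First, for each $a \in F$ I would write $a = \lambda_{a} \cdot 1 + a_{0}$ with $a_{0} \in A$, and similarly decompose $x_{1} = \mu \cdot 1 + b_{1}$ and $x = \lambda_{x} \cdot 1 + a_{x}$. Let $F_{0} = \{a_{0} : a \in F\} \cup \{b_{1}, a_{x}\} \subseteq A$ and fix a tolerance $\delta > 0$ depending on $\varepsilon$, $n$, the $\sigma_{i}$, and norms of the data. Using the approximate identity of projections in $A$ built into Property~$(\mathrm{T}_{k})$, pick a projection $e \in A$ that absorbs $F_{0}$ up to $\delta$. Using simplicity of $A$ together with \cite[Lemma~2.1]{Ph14}, produce a nonzero positive $x'' \in A_{+}$ such that $n+1$ mutually Cuntz-orthogonal copies of $x''$ are dominated in $A^{\sim}$ by $x$.

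Now apply Property~$(\mathrm{T}_{k})$ to $A$ with finite set $F_{0} \cup \{e\}$, element $y = e$, test element $x''$, and tolerance $\delta$, yielding $E \in \mathcal{I}^{(k)}$ with unit $p = 1_{E} \in A$ satisfying: $p$ almost commutes with $F_{0} \cup \{e\}$; $p F_{0} p \subseteq_{\delta} E$; $(e - epe - \delta)_{+} \precsim_{A} x''$; and $\|p b_{1} p\| > \|b_{1}\| - \delta$. Conditions~(1) and~(2) of the proposition then follow directly, since scalars commute with $p$ and $p \in E$; the norm bound $\|p x_{1} p\| \geq \|x_{1}\| - \varepsilon$ comes from the Property~$(\mathrm{T}_{k})$ bound on $p b_{1} p$ together with the decomposition $x_{1} = \mu + b_{1}$ and a small enough choice of $\delta$. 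For condition~(3), decompose $1 - p = (1-e) + (e-p)$ orthogonally in $A^{\sim}$: the $A$-piece $e - p$ equals $e(1-p)e = e - epe$ up to an $O(\delta)$ perturbation from approximate commutation of $e$ and $p$, and is Cuntz dominated by $x''$ via Property~$(\mathrm{T}_{k})$; the complementary piece $1 - e$ is absorbed via the nonzero image of $x$ in the quotient $A^{\sim}/A \cong \mathbb{C}$, yielding a projection in a hereditary subalgebra of a spectral cut-off of $x$ that is Murray--von~Neumann equivalent to $1-e$. Summing these two contributions gives $1 - p \precsim_{A} x$, and using the $n+1$ Cuntz-orthogonal copies of $x''$ paired with $n$ mutually orthogonal subprojections of $p$ (from the finite-dimensional summands of $E$) simultaneously delivers $1 - p \precsim_{s,n} p$. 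Finally, condition~(4) is a functional-calculus argument: $f_{\sigma_{3}}^{\sigma_{4}}(p x_{1} p)$ has large norm by the Property~$(\mathrm{T}_{k})$ bound on $\|p b_{1} p\|$, while $f_{\sigma_{1}}^{\sigma_{2}}((1-p) x_{1} (1-p))$ lies in a hereditary subalgebra Cuntz dominated by $1 - p$, hence is $n$-fold subequivalent to the former via the orthogonal copies.

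For the converse when $k = 0$, given data $F \subseteq A$, $x, y \in A_{+}$ with $x \neq 0$, and $\varepsilon > 0$ as in Property~$(\mathrm{T}_{0})$, apply the forward conclusion to $F$ viewed in $A^{\sim}$ with $x_{1}$ a suitable normalization of $y$, $n = 1$, and $\sigma_{i}$ chosen so the spectral functions jointly dominate the relevant spectrum of $y$. The Lin-style conditions then unwind via \autoref{lemkr} to yield $(y^{2} - ypy - \varepsilon)_{+} \precsim_{A} x$. The main obstacle throughout is condition~(3): since $1 - p \in A^{\sim} \setminus A$ cannot be Cuntz-dominated in $A^{\sim}$ by any element of $A$ alone, one must combine the Property~$(\mathrm{T}_{k})$ estimate controlling the $A$-part $e - p$ with a fullness argument absorbing $1 - e$ via the nonzero image of $x$ in $A^{\sim}/A$, while keeping all approximations compatible both with the $(n+1)$-fold decomposition and with the spectral comparison in condition~(4).
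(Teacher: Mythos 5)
Your overall strategy---keeping $E\subseteq A$ with $p=1_{E}\in A$ and then splitting $1-p=(1-e)+(e-p)$---cannot work, and the obstruction sits exactly at condition~(3). If $p\in A$, then $1-p$ has image $1$ under the quotient map $\pi\colon A^{\sim}\to A^{\sim}/A\cong\mathbb{C}$, while every element of $\overline{pA^{\sim}p}$ lies in the ideal $A$. A relation $1-p=v^{*}v$ with $vv^{*}\in\overline{b_{i}A^{\sim}b_{i}}\subseteq A$ would force $|\pi(v)|^{2}=1$ and $|\pi(v)|^{2}=0$ simultaneously, so $1-p\precsim_{s,n}p$ is literally impossible for any $p\in A$; the same quotient argument kills $1-p\precsim_{A}x$ whenever $x\in A_{+}$ (which the statement allows), and your proposed absorption of $1-e$ ``via the nonzero image of $x$ in $A^{\sim}/A$'' has no content when that image is zero. (The ``$E\subseteq A$'' in the statement is a typo for $E\subseteq A^{\sim}$; taking it at face value is what derails your argument.)

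The paper's proof avoids this by arranging that $1-p$ lies in $A$: it picks a projection $p_{i}$ from the approximate identity that nearly absorbs $F$, $x_{1}$ and $x$, passes to the unital corner $p_{i}Ap_{i}$ (which has Property~$(\mathrm{T}_{k})$ by \autoref{lemheru}, hence tracial rank at most $k$ by \autoref{propequstaf}), and applies the unital theorem \cite[Theorem~5.6]{Lin01} there to obtain $D\in\mathcal{I}^{(k)}$ with unit $q$ satisfying $p_{i}-q\precsim p_{i}xp_{i}$, $p_{i}-q\precsim_{s,n}q$ and the $f_{\sigma}$--comparison. The decisive step is then to set $E=\mathbb{C}(1-p_{i})+D$, so that $p=(1-p_{i})+q$ and $1-p=p_{i}-q$ is a small element of $A$ dominated by the nonzero element $p_{i}xp_{i}\in A_{+}$. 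Note also that the $\precsim_{s,n}$ and functional-calculus conditions are not directly extractable from Property~$(\mathrm{T}_{k})$ as you apply it; the paper gets them from Lin's theorem in the unital corner, plus the perturbation lemma \cite[Lemma~2.6]{Lin01} to pass from $f_{d_{3}}^{d_{4}}(qx_{1}q)$ to $f_{\sigma_{3}}^{\sigma_{4}}(px_{1}p)$. Your converse sketch for $k=0$ is closer in spirit, but the actual argument runs through \cite[Proposition~2.7]{Lin01c} to produce two orthogonal projections $p_{1},p_{2}$ with $p_{1}=1_{E}$, $p_{2}\precsim_{A}x$ and $p_{1}+p_{2}$ nearly a unit on the data, giving $(y^{2}-yp_{1}y-\varepsilon)_{+}\precsim_{A}yp_{2}y\precsim_{A}x$, rather than any tuning of the $\sigma_{i}$.
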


\begin{proof}
To prove the forward implication let $A$ be a simple  nonuintal C*-algebra
with Property~$(\mathrm{T}_{0})$.
By  Definition~\ref{deftk},
there is a net $(p_{i})_{i\in I}$ of projections in $A$ which is a
(not necessarily increasing) approximate identity for $A$. 
For any $y\in A^{\sim}$ we have
\begin{equation}\label{equai}
\|y\|=\lim_{i\to\infty}\|p_{i}yp_{i}\|.
\end{equation}
In fact, write $y=\lambda+a$ where $\lambda\in \mathbb{C}$ and $a\in A$. Since $A$
is not unital we have 
$\|y\|=\sup\{\|yb\|\colon b\in A\ \text{with}\ \|b\|\leq 1\}$.
Let $\delta>0$.
Then there is $b\in A $ with $\|b\|\leq 1$ such that $\|yb\|>\|y\|-\delta$. Note that
$p_{i}yp_{i}b=\lambda p_{i}b+p_{i}ap_{i}b$ which tends to
$\lambda b+ab=yb$. Thus there is $j\in I$ such that $\|p_{i}yp_{i}b\|>\|y\|-\delta$
for all $i\geq j$. Then for every $i\geq j$ we have
\[
\|y\|-\delta<\|p_{i}yp_{i}b\|\leq \|p_{i}yp_{i}\|\leq \|y\|,
\]
and so \eqref{equai} holds.

Next, let $\varepsilon$, $n$, $F$, $x_{1}$, and $x$ be as in the statement. Write
$F=\{x_{1},\ldots,x_{m}\}$
and $x_{j}=\lambda_{j}+a_{j}$ where $\lambda_{j}\in\mathbb{C}$ and $a_{j}\in A$ for all
$1\leq j\leq m$. 
Choose $d_{3},d_{4}$ with 
$\sigma_{4}<d_{3}<d_{4}<\sigma_{1}$.
By \cite[Lemma~2.6]{Lin01}, there exists $\eta>0$ such that if $a,b\in A^{\sim}$ are
positive elements with $\|a\|,\|b\|\leq \|x_{1}\|$ and $\|a-b\|<\eta$
then 
$[f_{d_{3}}^{d_{4}}(a)]\leq [f_{\sigma_{3}}^{\sigma_{4}}(b)]$.
(Note that in \cite[Lemma~2.6]{Lin01} it is assumed that $\|a\|,\|b\|\leq 1$ but the proof of 
this lemma works for any upper bound $M>0$ instead of 1.)
Choose $\delta$ with $0<\delta<\min\{\frac{\varepsilon}{4},\frac{\eta}{3}\}$.
By the previous remark and that 
$(p_{i})_{i\in I}$  is an approximate identity for $A$, there is $i\in I$ such that, with
$a_{j}'=p_{i}a_{j}p_{i}$, the following hold:
\begin{enumerate}
\setcounter{enumi}{\value{TmpEnumi}}

\item\label{propstafnu_it5} 
$\|p_{i}x_{1}p_{i}\|>\|x_{1}\|-\delta$;
\item\label{propstafnu_it6} 
$\|a_{j}p_{i}-a_{j}\|<\delta$ and $\|p_{i}a_{j}-a_{j}\|<\delta$ for all $1\leq j\leq m$;
\item\label{propstafnu_it7} 
$\|a_{j}'-a_{j}\|<\delta$ for all $1\leq j\leq m$;
\item\label{propstafnu_it8} 
$p_{i}xp_{i}\neq 0$.

\setcounter{TmpEnumi}{\value{enumi}}
\end{enumerate}
Set $G=\{a_{j}'\mid 1\leq j\leq m\}$.
By  Lemma~\ref{lemheru}, $B=p_{i}Ap_{i}$ has  Property~$(\mathrm{T}_{0})$ 
and
hence $\mathrm{TR}(B)\leq k$ by Proposition~\ref{propequstaf}.
Then, by \cite[Theorem~5.6]{Lin01}, 
there is a  C*-subalgebra $D\subseteq B$ with 
$D\in\mathcal{I}^{(0)}$ such that, with $q=1_{D}$,
the following hold:
\begin{enumerate}
\setcounter{enumi}{\value{TmpEnumi}}

\item\label{propstafnu_it9} 
$\|qa_{j}'-a_{j}'q\|<\delta$ for all $1\leq  j\leq m$;
\item\label{propstafnu_it10} 
$qGq\subseteq_{\delta} D$ and $\|qb_{1}q\|\geq \|b_{1}\|-\delta$ where $b_{1}=p_{i}x_{1}p_{i}$;
\item\label{propstafnu_it11} 
$p_{i}-q\precsim_{B} p_{i}xp_{i}$, and $ p_{i}-q\precsim_{s,n} q$.
\item\label{propstafnu_it12} 
$ f_{\sigma_{1}}^{\sigma_{2}}((p_{i}-q)b_{1}(p_{i}-q)) 
\precsim_{s,n} 
 f_{d_{3}}^{d_{4}}(qb_{1}q) $.

\setcounter{TmpEnumi}{\value{enumi}}
\end{enumerate}
Put $E=\mathbb{C}(1-p_{i})+D$ and $p=1-p_{i}+q$ which is the unit of $E$
(here $1$ denotes the unit of $A^{\sim}$). Then $E\in\mathcal{I}^{(0)}$.
Now we show that \eqref{propstafnu_it1}--\eqref{propstafnu_it4} in the statement hold. 
To see \eqref{propstafnu_it1}, by \eqref{propstafnu_it7} and \eqref{propstafnu_it9}, 
for all $1\leq  j\leq m$ we have
\begin{align*}
\|px_{j}-x_{j}p\|=\|pa_{j}-a_{j}p\| &\leq \|pa_{j}-pa_{j}'\|+
\|pa_{j}'-a_{j}'p\|+\|a_{j}'p-a_{j}p\|\\
&<2\delta+\|qa_{j}'-a_{j}'q\|<3\delta<\varepsilon.
\end{align*}
To see \eqref{propstafnu_it2}, fix  $1\leq  j\leq m$. By 
\eqref{propstafnu_it10} there is $d\in D$ such that 
$\|qa_{j}'q-d\|<\delta$. Put $e=\lambda_{j}p+d\in E$. Then by \eqref{propstafnu_it6}
at the fifth step we have
\begin{align*}
\|px_{j}p-e\|&=\|pa_{j}p-d\|\leq \|qa_{j}'q-d\|+\|pa_{j}p-qa_{j}'q\|\\
&<\delta+\|(1-p_{i})a_{j}(1-p_{i})+(1-p_{i})a_{j}q+qa_{j}(1-p_{i})\|\\
&<\delta+2\|a_{j}-p_{i}a_{j}\|+\|a_{j}-a_{j}p_{i}\| <4\delta<\varepsilon.
\end{align*}
For the second part of \eqref{propstafnu_it2}, by 
\eqref{propstafnu_it6} at the third step, by \eqref{propstafnu_it10}
at  the fifth step, and by \eqref{propstafnu_it5} at the sixth step   we get

\begin{align*}
\|px_{1}p\|&\geq 
\|(1-p_{i})x_{1}(1-p_{i})+qx_{1}q\| 
 -
\|(1-p_{i})x_{1}q\|-\|qx_{1}(1-p_{i})\|\\
&=\max\left\{\|(1-p_{i})x_{1}(1-p_{i})\|,\, \|qx_{1}q\|\right\} 
 -\|(1-p_{i})a_{1}q\|-\|qa_{1}(1-p_{i})\|\\
&>\|qx_{1}q\|-2\delta=\|qp_{i}x_{1}p_{i}q\|-2\delta\\
&\geq \|p_{i}x_{1}p_{i}\|-3\delta >\|x_{1}\|-4\delta >\|x_{1}\|-\varepsilon.
\end{align*}

To prove \eqref{propstafnu_it3}, note that $1-p=p_{i}-q$. Thus by 
\eqref{propstafnu_it11},
$ 1-p \precsim_{s,n} q\precsim_{s} p$. Also, we have
$1-p=p_{i}-q\precsim_{A} p_{i}xp_{i}\precsim_{A} x$.

To see \eqref{propstafnu_it4}, first note that
\[
(p_{i}-q)b_{1}(p_{i}-q))=(p_{i}-q)x_{1}(p_{i}-q))=(1-p)x_{1}(1-p)).
\]
Thus by \eqref{propstafnu_it12}, 
$ f_{\sigma_{1}}^{\sigma_{2}}((1-p)x_{1}(1-p))\precsim_{s,n} 
 f_{d_{3}}^{d_{4}}(qb_{1}q)
 = f_{d_{3}}^{d_{4}}(qx_{1}q) $.
So to prove \eqref{propstafnu_it4} it is enough to show that 
\begin{equation}\label{equfd1}
 f_{d_{3}}^{d_{4}}(qx_{1}q) \precsim_{s} f_{\sigma_{3}}^{\sigma_{4}}(px_{1}p).
\end{equation}
For this, first by \eqref{propstafnu_it6} we have (recall that $x_{1}=\lambda_{1}+a_{1}$):
\begin{align*}
\|px_{1}p-(qx_{1}q &+\lambda_{1}(1-p_{i}))\|\\
&=\|(1-p_{i})x_{1}q+qx_{1}(1-p_{i}) 
 +
(1-p_{i})x_{1}(1-p_{i}) 
  -\lambda_{1}(1-p_{i})\| \\
&=
\|(1-p_{i})a_{1}q+qa_{1}(1-p_{i})
+(1-p_{i})a_{1}(1-p_{i})\|\\
& <3\delta<\eta.
\end{align*}
On the other hand, we have $\|px_{1}p\|\leq\|x_{1}\|$ and 
\[
\|qx_{1}q+\lambda_{1}(1-p_{i})\|=
\max\{\|qx_{1}q\|,\ \|\lambda_{1}(1-p_{i})\|\}\leq\|x_{1}\|.
\]
Thus, by the choice of $\eta$, we get
\begin{equation}\label{equfd2}
 f_{d_{3}}^{d_{4}}(qx_{1}q+\lambda_{1}(1-p_{i}))\precsim_{s}  f_{\sigma_{3}}^{\sigma_{4}}(px_{1}p).
\end{equation}
Also, since $qx_{1}q\perp \lambda_{1}(1-p_{i})$ and $f_{d_{3}}^{d_{4}}(0)=0$ we have
\[
f_{d_{3}}^{d_{4}}(qx_{1}q+\lambda_{1}(1-p_{i}))=
f_{d_{3}}^{d_{4}}(qx_{1}q)+
f_{d_{3}}^{d_{4}}(\lambda_{1}(1-p_{i})),
\]
and hence,
\begin{equation}\label{equfd3}
 f_{d_{3}}^{d_{4}}(qx_{1}q) \precsim_{s}
 f_{d_{3}}^{d_{4}}(qx_{1}q+\lambda_{1}(1-p_{i})) .
\end{equation}
Combining \eqref{equfd2} and \eqref{equfd3}, we get \eqref{equfd1},
and hence \eqref{propstafnu_it4} follows.

Now we prove the backward implication.
Suppose that the condition of the statement holds  (we do not use 
\eqref{propstafnu_it4} in the proof).
We show that $A$ has  Property~$(\mathrm{T}_{0})$.
Note that this condition is stronger than the definition of  
$\mathrm{TR}_{w}(A^{\sim})=0$ (that is, $A^{\sim}$ is TAF),
since it is not assumed that $x\in  (A^{\sim})_{+}$ is full.
Observe that in \cite[Proposition~2.7]{Lin01c} the assumption
that $a\in (A^{\sim})_{+}$ is full is not used in the proof of both parts.
Now let $\varepsilon>0$, let $x,y\in A_{+}$,  and let $F\subseteq A$ be as in 
Definition~\ref{deftk}.
By  Lemma~\ref{lemmkey2} we may assume that $\|y\|\leq 1$.
Then by (the proof of) \cite[Proposition~2.7]{Lin01c} with $x$ in place of $x_{1}$,
with $\mathcal{F}=F\cup\{x,y\}$, with $\frac{\varepsilon}{3}$ in place of $\varepsilon$,
and with $x$ in place of $a$, we obtain two orthogonal projections $p_{1},p_{2}\in A$
and a finite dimensional C*-subalgebra $E\subseteq A$
such that $p_{1}=1_{E}$ and that the following hold:
\begin{enumerate}
\setcounter{enumi}{\value{TmpEnumi}}

\item\label{propstafnu_it13}
$\|p_{i}a-ap_{i}\|<\frac{\varepsilon}{3}$ for all $a\in \mathcal{F}$ and $i=1,2$;
\item\label{propstafnu_it14}
$p_{1}\mathcal{F}p_{1}\subseteq_{\frac{\varepsilon}{3}}E$, 
$\|p_{1}xp_{1}\|\geq \|x\|-\frac{\varepsilon}{3}$, and
$\|(p_{1}+p_{2})a-a\|<\frac{\varepsilon}{3}$ for all $a\in \mathcal{F}$;
\item\label{propstafnu_it15}
$p_{2}\precsim_{A} x$.

\setcounter{TmpEnumi}{\value{enumi}}
\end{enumerate}
By \eqref{propstafnu_it13} and \eqref{propstafnu_it14}, 
the finite dimensional C*-subalgebra $E\subseteq A$ with unit $p_{1}$
satisfies \eqref{deftk_it1}, \eqref{deftk_it2}, and \eqref{deftk_it4} in  
Definition~\ref{deftk}. To see \eqref{deftk_it3}, first by 
\eqref{propstafnu_it13} and \eqref{propstafnu_it14} we get 
\begin{align*}
\|y^{2}-yp_{1}y-yp_{2}y\|&\leq \|y^{2}-y^{2}p_{1}-y^{2}p_{2}\|+\|y^{2}p_{1}-yp_{1}y\|+
\|y^{2}p_{2}-yp_{2}y\|\\
&\leq 
\|y\|(\|y-y(p_{1}+p_{2})\|+\|yp_{1}-p_{1}y\|+\|yp_{2}-p_{2}y\|)\\
&<\frac{\varepsilon}{3}+\frac{\varepsilon}{3}+\frac{\varepsilon}{3}=\varepsilon.
\end{align*}
Then by \eqref{propstafnu_it15} and Lemma~\ref{lemkr} we have 
$(y^{2}-yp_{1}y-\varepsilon)_{+}\precsim_{A} yp_{2}y\precsim_{A} p_{2}\precsim_{A} x$.
Therefore, $A$ has Property~$(\mathrm{T}_{0})$, as desired. (Note that \eqref{propstafnu_it14}
also implies that
$A$ has an approximate identity consisting of projections.)
\end{proof}

Observe that  Proposition~\ref{propstafnu} holds also for any simple  \emph{unital} C*-algebra $A$
(note that in this case $A^{\sim}=A$ according to our convention).
This follows from  Proposition~\ref{propequstaf} and 
\cite[Theorem~5.6]{Lin01}.

\begin{rem}\label{rmkt0tr}
Let $A$ be a simple  nonunital C*-algebra. Then
$A$ has Property~$(\mathrm{T}_{0})$ (equivalently, $\mathrm{TR}(A)=0$ by
 Theorem~\ref{main.trk}) if and only if 
 Conditions~\eqref{propstafnu_it1}--\eqref{propstafnu_it3} in  Proposition~\ref{propstafnu}
hold (because Condition~\eqref{propstafnu_it4} is not 
used in the proof of the converse
of  Proposition~\ref{propstafnu}).
Thus
the only difference between the notion of having Property~$(\mathrm{T}_{0})$
(equivalently, $\mathrm{TR}(A)=0$) and
$\mathrm{TR}_{w}(A)=0$  is that  in the definition of $\mathrm{TR}_{w}(A)=0$
(Definition~\ref{deftrw})
it is required  that the nonzero positive
element $x\in A^{\sim}$ is full.
\end{rem}


Now, we can prove one direction of  Theorem~\ref{main.trk}.
 
\begin{prop}\label{propstaf}
Let $A$ be a simple 
 C*-algebra with Property~$(\mathrm{T}_{0})$.
Then $\mathrm{TR}(A)=0$.
\end{prop}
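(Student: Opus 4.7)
The plan is to unfold the definitions and reduce to the previously established \autoref{propstafnu} (together with \autoref{propequstaf} in the unital case). Since $\mathrm{TR}(A)$ is defined via $\mathrm{TR}(A^{\sim})$, and since the definition in the unital situation $A^{\sim}=A$ is exactly captured by Property~$(\mathrm{T}_k)$ thanks to \autoref{propequstaf}, I would split the argument into two cases.

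\textbf{Case 1: $A$ unital.} Here $A^{\sim}=A$, so \autoref{propequstaf} immediately gives $\mathrm{TR}(A)\leq k$.

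\textbf{Case 2: $A$ nonunital.} I would verify directly from \autoref{deftr} applied to $A^{\sim}$ that $\mathrm{TR}(A^{\sim})\leq k$. So fix $\varepsilon>0$, a finite set $F\subseteq A^{\sim}$ containing a nonzero positive element $b$, constants $0<\sigma_3<\sigma_4<\sigma_1<\sigma_2<1$, and $n\in\mathbb{N}$. I would shrink $\varepsilon$ if necessary so that $\varepsilon<\|b\|$. Then I would feed these data into \autoref{propstafnu}, taking $x_1=b$ and choosing (for instance) $x=b$ in the role of the auxiliary positive element. This produces a C*-subalgebra $E\subseteq A^{\sim}$ belonging to $\mathcal{I}^{(k)}$ with unit $p=1_E$ (note $p\neq 0$ because $\|pbp\|\geq\|b\|-\varepsilon>0$) satisfying:
\begin{itemize}
\item $\|pa-ap\|<\varepsilon$ for all $a\in F$, which is condition \eqref{deftr_it1} of \autoref{deftr};
\item $pFp\subseteq_{\varepsilon}E$, which is condition \eqref{deftr_it2};
\item $1-p\precsim_{s,n} p$, which is the first half of condition \eqref{deftr_it3};
\item $f_{\sigma_1}^{\sigma_2}((1-p)b(1-p))\precsim_{s,n} f_{\sigma_3}^{\sigma_4}(pbp)$, which is the second half of condition \eqref{deftr_it3}.
\end{itemize}
Thus every requirement of \autoref{deftr} for $\mathrm{TR}(A^{\sim})\leq k$ is met, and hence $\mathrm{TR}(A)\leq k$ by definition.

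There is essentially no obstacle here: the heavy lifting (producing the subalgebra $E=\mathbb{C}(1-p_i)+D$ inside $A^{\sim}$ with the correct unit, commutation, approximation, and comparison properties) is already carried out in \autoref{propstafnu}. The only minor point requiring care is the choice of the auxiliary element: the conclusion of \autoref{propstafnu} contains an extra clause $1-p\precsim_A x$ which does not appear in \autoref{deftr}, so one must pick $x$ to be any nonzero positive element of $A^{\sim}$ (for example $x=b$) simply to invoke the proposition, and then discard this extra information when matching against \autoref{deftr}.
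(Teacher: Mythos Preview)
Your proposal is correct and follows essentially the same approach as the paper: split into the unital case (handled by \autoref{propequstaf}) and the nonunital case (handled by feeding the data of \autoref{deftr} for $A^{\sim}$ into \autoref{propstafnu}). The paper's proof is terser, simply citing these two results, while you have spelled out the matching of conditions and the minor bookkeeping (shrinking $\varepsilon$ to force $p\neq 0$, choosing $x=b$ as a throwaway input), but the content is the same.
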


\begin{proof}
If $A$ is a simple  unital C*-algebra then $A$ has Property~$(\mathrm{T}_{0})$ if and only if  
$\mathrm{TR}(A)=0$, by  Proposition~\ref{propequstaf}.
Let $A$ be a simple  nonunital  C*-algebra with Property~$(\mathrm{T}_{0})$. 
Then  Proposition~\ref{propstafnu} and
 Definition~\ref{deftr} imply that $\mathrm{TR}(A)=0$, as desired.
\end{proof}

\subsection{Permanence properties}
In this subsection we study some permanence properties
of Property~$(\mathrm{T}_{0})$, and we give
the proof of Theorem~\ref{main.trk}.
We begin with the following proposition which shows
that if a simple
C*-algebra has the local Property~$(\mathrm{T}_{0})$ then it has Property~$(\mathrm{T}_{0})$.

\begin{prop}\label{proploc}
Let $A$ be a simple C*-algebra with the following property:
for every $ \varepsilon > 0 $ 
	and every  finite subset $ F \subseteq A$ there exists a simple C*-subalgebra $ B$ 
	of $A$ with Property~$(\mathrm{T}_{0})$	such that  $ F \subseteq_{\varepsilon}B $.
Then $A$ has Property~$(\mathrm{T}_{0})$.
\end{prop}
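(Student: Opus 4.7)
The plan is to show directly that $A$ satisfies the four conditions of \autoref{deftk}, building approximants first in a small subalgebra $B$ that has Property~$(\mathrm{T}_k)$ and then transferring everything back to $A$ via standard perturbation estimates and \autoref{lemkr}. Fix $\varepsilon>0$, a finite set $F\subseteq A$, and $x,y\in A_+$ with $x\neq 0$, and (by scaling and absorbing elements) assume $F\cup\{x,y\}$ lies in the closed unit ball of $A$. Choose a single $\delta>0$ much smaller than both $\varepsilon$ and $\|x\|$, to be calibrated at the end. By hypothesis there is a simple C*-subalgebra $B\subseteq A$ with Property~$(\mathrm{T}_k)$ such that $F\cup\{x,y\}\subseteq_{\delta} B$; for each $a\in F$ pick $a'\in B$ with $\|a-a'\|<\delta$, and by the standard ``take the positive part of the self-adjoint part'' trick choose $x',y'\in B_+$ with $\|x-x'\|<2\delta$ and $\|y-y'\|<2\delta$. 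For $\delta$ small enough $(x'-\delta)_+$ is nonzero.

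Apply Property~$(\mathrm{T}_k)$ of $B$ to the finite set $F':=\{a'\colon a\in F\}$, to the positive elements $(x'-\delta)_+$ (in place of $x$) and $y'$ (in place of $y$), and with a small tolerance $\eta>0$ to be chosen. This gives a C*-subalgebra $E\subseteq B\subseteq A$ in $\mathcal{I}^{(k)}$ with unit $p=1_E$ such that
\begin{enumerate}
\item[(1$'$)] $\|pa'-a'p\|<\eta$ for all $a'\in F'$;
\item[(2$'$)] $pF'p\subseteq_\eta E$;
\item[(3$'$)] $(y'^{2}-y'py'-\eta)_+\precsim_B (x'-\delta)_+$;
\item[(4$'$)] $\|p(x'-\delta)_+p\|>\|(x'-\delta)_+\|-\eta$.
\end{enumerate}
Conditions (1) and (2) of \autoref{deftk} for the pair $(p,E)$ follow immediately from (1$'$), (2$'$) and the $2\delta$-estimates $\|pa-pa'\|,\|a'p-ap\|<\delta$, $\|pap-pa'p\|<2\delta$, provided $2\delta+\eta<\varepsilon$. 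For (4), $\|pxp\|\geq\|px'p\|-2\delta\geq\|p(x'-\delta)_+p\|-2\delta>\|(x'-\delta)_+\|-\eta-2\delta\geq\|x\|-4\delta-\eta$, which is $>\|x\|-\varepsilon$ once $4\delta+\eta<\varepsilon$.

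For (3), compare $y^{2}-ypy$ with $y'^{2}-y'py'$: a direct estimate gives $\|(y^{2}-ypy)-(y'^{2}-y'py')\|\leq C\delta$ for a constant $C$ depending only on $\|y\|\leq 1$ and $\|p\|=1$. Choosing $\delta,\eta$ so that $C\delta+\eta<\varepsilon$, \autoref{lemkr} yields
\[
(y^{2}-ypy-\varepsilon)_+\precsim_A (y'^{2}-y'py'-\eta)_+.
\]
Since $B\subseteq A$, (3$'$) gives $(y'^{2}-y'py'-\eta)_+\precsim_A (x'-\delta)_+$, and from $\|x-x'\|<2\delta$ a second application of \autoref{lemkr} (applied in $A$) yields $(x'-\delta)_+\precsim_A x$. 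Concatenating these three subequivalences proves (3). Finally, to obtain the approximate identity of projections demanded by \autoref{deftk}, given any finite $G\subseteq A_+$ and $\delta'>0$, pick $B$ with Property~$(\mathrm{T}_k)$ approximating $G$ within $\delta'$ and use the approximate identity of projections of $B$ to produce a projection $q\in B\subseteq A$ with $\|qa-a\|<3\delta'$ for all $a\in G$; this suffices.

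The main obstacle is Condition (3), since we must transfer the Cuntz subequivalence $(y'^{2}-y'py'-\eta)_+\precsim_B (x'-\delta)_+$ across two perturbations: first from $(y,p,y)$ to $(y',p,y')$, and second from $(x'-\delta)_+$ back to $x$ itself, where $x'$ need not lie in the hereditary subalgebra generated by $x$. The built-in $\delta$-gap in $(x'-\delta)_+$ together with repeated use of \autoref{lemkr} is precisely what allows both transfers to succeed; all other verifications are routine perturbation arguments.
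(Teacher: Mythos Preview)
Your proof is essentially the same as the paper's: approximate $x,y,F$ by elements of a subalgebra $B$ with Property~$(\mathrm{T}_k)$, apply the property in $B$ with a built-in gap on the $x$-side, and transfer back via \autoref{lemkr} (the paper obtains positive approximants by approximating $x^{1/2},y^{1/2}$ rather than taking positive parts, but this is cosmetic). One calibration slip: from $\|x-x'\|<2\delta$ \autoref{lemkr} yields only $(x'-2\delta)_+\precsim_A x$, not $(x'-\delta)_+\precsim_A x$; replace $(x'-\delta)_+$ by $(x'-2\delta)_+$ throughout (or sharpen the initial approximation to $\delta/2$) and the argument goes through with only harmless changes to the constants in (4).
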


\begin{proof}
Let $A$ be a simple C*-algebra with  the property in the statement.
Observe that $A$ has an approximate identity
(not necessarily increasing) consisting of projections.
Let $x$, $y$, $\varepsilon$, and  $F$ be as in Definition~\ref{deftk}. 
We may assume that $\varepsilon<1$ and $\|x\|=1$. 
Also, by  Lemma~\ref{lemmkey2} we may assume that
$\|y\|<\frac{1}{2}$.
 Write $F = \{f_{1}, \ldots, f_{m}\}$.
Choose $ \delta >0$ such that $\delta <\frac{\varepsilon}{4}$ and 
$ (2 + \delta)\delta < \frac{\varepsilon}{12}$.
Set $\tilde{F} = F \cup \{x^{\frac{1}{2}}, y^{\frac{1}{2}}\}$.
By assumption there is a  simple C*-subalgebra $B$ of $A$ with Property~$(\mathrm{T}_{0})$ such that
	$\tilde{F} \subseteq_{\delta}B $.
	Thus there is $ b \in B $ such that $\|x^{\frac{1}{2}} - b\|< \delta $. Then
	\begin{align}\label{proploc_eq1}
	 \|b^{*}b - x\| & \leq
	\| b^{*}b - b^{*}x^{\frac{1}{2}}\|
	+ \|b^{*}x^{\frac{1}{2}} - x \| \\ \notag
	&  \leq \|b\|\delta + \|x^{\frac{1}{2}}\|\delta \\
	& \leq (\|x^{\frac{1}{2}}\| + \delta + \|x^{\frac{1}{2}}\|)\delta
	< \tfrac{\varepsilon}{12}  \notag.
	\end{align}
Also, there exists $c \in B $ such that $ \| y^{\frac{1}{2}} - c\| < \delta$. Similarly, 
we have $\|c^{*}c - y\| < \tfrac{\varepsilon}{12}$.
Set $w = c^{*}c $. So $\|w\| < 1$. Also set $d=(b^{*}b-\frac{\varepsilon}{12})_{+}$.
Note that $d\neq 0$ since $\|b^{*}b\|>1-\frac{\varepsilon}{12}>\frac{1}{2}$
and $\frac{\varepsilon}{12}<\tfrac{1}{2}$.
Moreover, there exist $b_{1}, \ldots, b_{m} \in B $ such that $ \| b_{i} - f_{i }\| < \delta$ 
for all $ i = 1, \ldots, m $. 
Put $D= \{b_{1}, \ldots, b_{m}\}$.
Since $B$ has Property~$(\mathrm{T}_{0})$, by  Definition~\ref{deftk} 
there is a subalgebra
$E \subseteq B$ in $\mathcal{I}^{(0)}$ such that, with $p=1_{E}$,  the following hold: 
\begin{enumerate}

\item\label{proploc_it1}
$\|pb_{i}-b_{i}p\|<\delta$ for all $i=1,\ldots,m$;
\item\label{proploc_it2}
$pDp\subseteq_{\delta}E$;
\item\label{proploc_it3}
$(w^{2}-wpw-\delta)_{+} \precsim_{B} d$;
\item\label{proploc_it4}
$\|pdp\|>\|d\|-\delta$.
\end{enumerate}
Now we verify Conditions~\eqref{deftk_it1}--\eqref{deftk_it4} in  Definition~\ref{deftk} 
for the given $F ,x, y,\varepsilon $. For  Condition~\eqref{deftk_it1}, using
\eqref{proploc_it1} above and 
$ \| b_{i} - f_{i }\| < \delta$ we have
\[
\|pf_{i}-f_{i}p\|\leq \|pf_{i}-pb_{i}\|+\|pb_{i}-b_{i}p\|+\|b_{i}p-f_{i}p\|<3\delta<\varepsilon.
\]
To see Condition~\eqref{deftk_it2}, 
 fix $1\leq i\leq m$. By \eqref{proploc_it2} above there is $e\in E$ such that $\|pb_{i}p-e\|<\delta$. Then we have
\[
\|pf_{i}p-e\|\leq \|pf_{i}p-pb_{i}p\|+\|pb_{i}p-e\|<2\delta<\varepsilon.
\]
To show Condition~\eqref{deftk_it4} in  Definition~\ref{deftk}, using
\eqref{proploc_it4} at the second step and 
\eqref{proploc_eq1} at the fourth step   
  we get
\begin{align*}
\|pxp\|&\geq \|pdp\|-\|p(x-d)p\|\\
&>\|d\|-\delta -\tfrac{\varepsilon}{12}\\
&>\|b^{*}b\|-\tfrac{\varepsilon}{12}-
\tfrac{\varepsilon}{4}-\tfrac{\varepsilon}{12}\\
&>\|x\|-\tfrac{\varepsilon}{12}-\tfrac{\varepsilon}{2}>\|x\|-\varepsilon.
\end{align*}
To prove  Condition~\eqref{deftk_it3} in  Definition~\ref{deftk}, first    
using the inequalities $\|w - y\| < \tfrac{\varepsilon}{12}$ and
 $\|w\| < 1$, we obtain
\begin{align*}
	\|(w^{2} - wpw - \delta)_{+} &- (y^{2} - ypy)\| \\
	& \leq
	\| (w^{2} - wpw - \delta)_{+} - (w^{2} - wpw)\| \\
	& + \|(w^{2} - wpw) - (y^{2} - ypy) \| \\
	&\leq
	\delta + \|w^{2} - y^{2}\| + \|wpw - wpy \| 
	 + \|wpy  - ypy \| \\
	 &
	\leq \tfrac{\varepsilon}{4} + \tfrac{\varepsilon}{6} + \tfrac{\varepsilon}{12} + \tfrac{\varepsilon}{12} <\varepsilon.
	\end{align*}
	Therefore, by \eqref{proploc_it3} and Lemma~\ref{lemkr},
	$
	(y^{2} - ypy - \varepsilon)_{+} \precsim_{A} (w^{2} - wpw - 
	\delta)_{+} \precsim_{A} d\precsim_{A} x
	$.
	 This finishes the proof.
\end{proof}

The preceding proposition implies that the class of simple C*-algebras with
Property~$(\mathrm{T}_{0})$ is closed under taking arbitrary inductive limits.

The following characterization of Property~$(\mathrm{T}_{0})$ is essential in the following.

\begin{prop}\label{propappu}
Let $A$ be a simple C*-algebra.
Then $A$ has Property~$(\mathrm{T}_{0})$ if and only if there exists an approximate identity
(not necessarily increasing)
consisting  of projections 
$(p_{i})_{i \in I}$ for $A$ such that  
$\mathrm{TR}(p_{i}Ap_{i})=0$ for all $i\in I$.
\end{prop}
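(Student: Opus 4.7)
The plan is to deduce this proposition quickly from three earlier results of this appendix: Lemma~\ref{lemheru} (Property~$(\mathrm{T}_{k})$ is inherited by unital hereditary C*-subalgebras), Proposition~\ref{propequstaf} (for simple unital C*-algebras, Property~$(\mathrm{T}_{k})$ is equivalent to $\mathrm{TR}\leq k$), and Proposition~\ref{proploc} (if finite subsets of $A$ can be approximated by simple C*-subalgebras with Property~$(\mathrm{T}_{k})$, then $A$ itself has Property~$(\mathrm{T}_{k})$).

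For the forward direction, suppose $A$ has Property~$(\mathrm{T}_{k})$. By \autoref{deftk}, $A$ has an approximate identity $(p_{i})_{i\in I}$ consisting of projections. Fix $i\in I$ and set $B_{i}=p_{i}Ap_{i}$. Since $A$ is simple, the hereditary subalgebra $B_{i}$ is simple, and it is unital with unit $p_{i}$. Lemma~\ref{lemheru} yields that $B_{i}$ has Property~$(\mathrm{T}_{k})$, and then Proposition~\ref{propequstaf} gives $\mathrm{TR}(B_{i})\leq k$.

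For the converse, suppose $(p_{i})_{i\in I}$ is an approximate identity of projections for $A$ with $\mathrm{TR}(p_{i}Ap_{i})\leq k$ for every $i\in I$. In view of Proposition~\ref{proploc}, it suffices to show that for every finite subset $F\subseteq A$ and every $\varepsilon>0$ there is a simple C*-subalgebra $B$ of $A$ with Property~$(\mathrm{T}_{k})$ such that $F\subseteq_{\varepsilon} B$. Since $(p_{i})$ is an approximate identity, $\|p_{i}a-a\|\to 0$ and $\|ap_{i}-a\|\to 0$ for every $a\in A$, and because each $p_{i}$ is a contraction we deduce $\|p_{i}ap_{i}-a\|\to 0$. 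As $F$ is finite, we may select $i_{0}\in I$ satisfying $\|p_{i_{0}}ap_{i_{0}}-a\|<\varepsilon$ for every $a\in F$. Set $B=p_{i_{0}}Ap_{i_{0}}$; then $p_{i_{0}}ap_{i_{0}}\in B$ witnesses $F\subseteq_{\varepsilon}B$, and $B$ is simple (as a corner of a simple C*-algebra) and unital with $\mathrm{TR}(B)\leq k$. Proposition~\ref{propequstaf} therefore ensures $B$ has Property~$(\mathrm{T}_{k})$, and an application of Proposition~\ref{proploc} completes the proof.

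There is no substantial obstacle once the three preceding results are lined up; the argument is essentially bookkeeping. The only point worth highlighting is that the approximate identity is not assumed to be increasing, but for any approximate identity $(e_{\lambda})$ one still has $\|e_{\lambda}a-a\|\to 0$ on both sides, which is enough to select a single index $i_{0}$ that simultaneously works for all finitely many elements of $F$.
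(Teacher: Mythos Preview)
Your proof is correct and follows essentially the same route as the paper's: the forward direction uses \autoref{deftk}, \autoref{lemheru}, and the unital equivalence of Property~$(\mathrm{T}_{k})$ with $\mathrm{TR}\leq k$ (the paper cites \autoref{propstaf}, which in the unital case is exactly \autoref{propequstaf}), while the backward direction approximates a finite set by a single corner $p_{i_0}Ap_{i_0}$ and then invokes \autoref{proploc}. The expanded justification you give for selecting a single index from a not-necessarily-increasing approximate identity is a helpful clarification but not a departure from the paper's argument.
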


\begin{proof}
The forward implication follows from  Definition~\ref{deftk},  Lemma~\ref{lemheru},
and  Proposition~\ref{propstaf}. For the backward implication, let $(p_{i})_{i \in I}$ be as
in the statement.  Then Proposition~\ref{propequstaf} implies that each
$p_{i}Ap_{i}$ is a simple C*-algebra with Property~$(\mathrm{T}_{0})$. 
Let  $F\subseteq A$ be a finite subset and let $\varepsilon>0$. Since 
$(p_{i})_{i \in I}$ is an approximate identity, there exists $i\in I$ such that
$F\subseteq_{\varepsilon} p_{i}Ap_{i}$. 
Applying Proposition~\ref{proploc}, we conclude  that $A$ has Property~$(\mathrm{T}_{0})$.
\end{proof}

With the preceding characterization of Property~$(\mathrm{T}_{0})$, 
we can obtain more properties of simple C*-algebras with Property~$(\mathrm{T}_{0})$.

\begin{thm}\label{thmrrsr}
Let $A$ be a simple C*-algebra with Property~$(\mathrm{T}_{0})$. 
Then $A$ has real rank zero and stable rank one.
\end{thm}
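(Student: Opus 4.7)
My plan is to reduce to the unital case via \autoref{propappu} and then use a local approximation argument to transfer real rank zero and stable rank one from the corners $p_{i} A p_{i}$ to $A$. By \autoref{propappu}, I obtain an approximate identity $(p_{i})_{i \in I}$ of projections in $A$ such that $\mathrm{TR}(p_{i} A p_{i}) \leq 0$ for every $i \in I$. Each corner $p_{i} A p_{i}$ is then a simple unital C*-algebra with tracial rank zero, so by Lin's results for the unital case (\cite{Lin01}), each $p_{i} A p_{i}$ has real rank zero and stable rank one.

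For real rank zero, I will use the characterization that self-adjoint elements with finite spectrum are dense in $A_{\mathrm{sa}}$. Given a self-adjoint $a \in A$ and $\varepsilon > 0$, I first pick $i$ large enough that $\|p_{i} a p_{i} - a\| \leq \|p_{i} a - a\| + \|a p_{i} - a\| < \varepsilon/2$. Since $\mathrm{RR}(p_{i} A p_{i}) = 0$, I approximate the self-adjoint element $p_{i} a p_{i} \in p_{i} A p_{i}$ within $\varepsilon/2$ by some self-adjoint $b \in p_{i} A p_{i}$ with finite spectrum. Then $b \in A_{\mathrm{sa}}$ still has finite spectrum and $\|a - b\| < \varepsilon$.

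For stable rank one, I will show that invertibles are dense in $\widetilde{A}$. Given $x = \lambda + a \in \widetilde{A}$ (with $\lambda \in \mathbb{C}$, $a \in A$) and $\varepsilon > 0$, by first perturbing $\lambda$ by a small nonzero scalar I may assume $\lambda \neq 0$. Choosing $i$ with $\|p_{i} a p_{i} - a\| < \varepsilon/2$, the element $x_{i} := \lambda(1 - p_{i}) + (\lambda p_{i} + p_{i} a p_{i})$ lies in the unital C*-subalgebra $\mathbb{C}(1 - p_{i}) \oplus p_{i} A p_{i} \subseteq \widetilde{A}$, a genuine direct sum because $(1 - p_{i}) b = b(1 - p_{i}) = 0$ for every $b \in p_{i} A p_{i}$. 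Since $\mathrm{sr}(p_{i} A p_{i}) = 1$, I choose $b \in p_{i} A p_{i}$ invertible as an element of the unital algebra $p_{i} A p_{i}$ (with unit $p_{i}$) and satisfying $\|b - (\lambda p_{i} + p_{i} a p_{i})\| < \varepsilon/2$. Then $\lambda(1 - p_{i}) + b$ is invertible in $\widetilde{A}$, with inverse $\lambda^{-1}(1 - p_{i}) + b^{-1}$, and lies within $\varepsilon$ of $x$.

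The main subtlety lies in the stable rank one step: one must work carefully in the non-$\sigma$-unital setting and must handle elements of $\widetilde{A}$ having zero scalar component (for which no element of $A$ is invertible in $\widetilde{A}$). Both difficulties are resolved by the orthogonal direct sum decomposition $\mathbb{C}(1 - p_{i}) \oplus p_{i} A p_{i}$ sitting inside $\widetilde{A}$, together with the initial scalar perturbation; the real rank zero argument is by comparison a straightforward local approximation and presents no such complication.
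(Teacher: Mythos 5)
Your argument is correct, but it takes a genuinely different route from the paper. The paper fixes a \emph{single} nonzero projection $p$ with $\mathrm{TR}(pAp)=0$ (which exists by \autoref{propappu}), gets $\mathrm{RR}(pAp)=0$ and $\mathrm{tsr}(pAp)=1$ from Lin's unital results, and then transfers these to $A$ by citing general machinery: since $A$ is simple, $pAp$ is a \emph{full} corner, so real rank zero passes to $A$ by Morita invariance (\cite[Theorem~3.8]{BP91}) and stable rank one passes by Blackadar's inequality $\mathrm{tsr}(A)\leq \mathrm{tsr}(pAp)$ for full corners (\cite[Corollary~4.6]{Black}). You instead use the whole approximate identity $(p_i)_{i\in I}$ of projections and prove both properties by direct local approximation: the finite-spectrum characterization of real rank zero for $a\approx p_iap_i$, and density of invertibles in $A^{\sim}$ via the orthogonal decomposition $\mathbb{C}(1-p_i)\oplus p_iAp_i$ together with the initial perturbation to nonzero scalar part (which is indeed necessary, since no element of $A$ is invertible in $A^{\sim}$ when $A$ is nonunital). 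Your computation that $\lambda(1-p_i)+b$ has inverse $\lambda^{-1}(1-p_i)+b^{-1}$ is correct because $b(1-p_i)=(1-p_i)b=0$ for $b\in p_iAp_i$, and a self-adjoint element with finite spectrum in the corner still has finite spectrum in $A$ (at worst one picks up the eigenvalue $0$). The trade-off: the paper's proof is shorter given the cited results and needs only one projection, but it leans on simplicity to guarantee fullness of the corner; your proof is more self-contained and elementary, uses no Morita-type transfer theorems, and in fact only needs an approximate identity of projections whose corners have real rank zero and stable rank one, not simplicity per se.
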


\begin{proof}
Let $A$ be a nonzero simple C*-algebra with Property~$(\mathrm{T}_{0})$.
 Proposition~\ref{propappu} implies the existence of a nonzero projection $p \in A$ such that 
$\mathrm{TR}(pAp)=0$. 
Thus, by \cite[Theorem~7.1]{Lin01}, $pAp$ has real rank zero. 
Since $A$ is simple, $pAp$ is a full corner of $A$ and so $pAp$ is Morita equivalent to $A$. 
Then by \cite[Theorem~3.8]{BP91}, $A$ has also real rank zero. 
To see that $A$ has stable rank one, first note that \cite[Theorem~6.9]{Lin01} and 
\cite[Theorem~6.13]{Lin01} imply that $\mathrm{tsr}(pAp)=1$. 
Moreover, by \cite[Corollary~4.6]{Black}, $\mathrm{tsr}(A) \leq \mathrm{tsr}(pAp)$. 
Hence, $A$ has stable rank one.
\end{proof}

\begin{prop}\label{propher}
Let $A$ be a simple C*-algebra with Property~$(\mathrm{T}_{0})$ and let $B$ be a hereditary
C*-subalgebra of $A$. Then $B$ has Property~$(\mathrm{T}_{0})$ if and only if it has
an approximate identity (not necessarily increasing) consisting of 
 projections.
\end{prop}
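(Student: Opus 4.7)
The forward implication is immediate from the definition of Property~$(\mathrm{T}_{k})$ (see \autoref{deftk}), which already requires an approximate identity consisting of projections. So the content of the proposition lies in the converse.

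My plan for the converse is to reduce everything to \autoref{propappu}. Assume $B$ admits an approximate identity of projections $(q_{j})_{j\in J}$. First I would observe that $B$ is itself simple, since any hereditary C*-subalgebra of a simple C*-algebra is simple. Next, for each $j$, the element $q_{j}$ is a projection in $A$, and because $B$ is hereditary in $A$ the corner $q_{j}Bq_{j}$ coincides with $q_{j}Aq_{j}$ (any $q_{j}aq_{j}$ with $a\in A$ lies in $B$ by hereditariness). So $q_{j}Aq_{j}$ is a unital hereditary C*-subalgebra of $A$.

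Since $A$ has Property~$(\mathrm{T}_{k})$, \autoref{lemheru} tells me that $q_{j}Aq_{j}$ also has Property~$(\mathrm{T}_{k})$. Being unital, \autoref{propequstaf} then gives $\mathrm{TR}(q_{j}Aq_{j})\leq k$, and therefore $\mathrm{TR}(q_{j}Bq_{j})\leq k$ for every $j\in J$.

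Thus $B$ is a simple C*-algebra possessing an approximate identity of projections $(q_{j})_{j\in J}$ such that $\mathrm{TR}(q_{j}Bq_{j})\leq k$ for every $j$. Applying the backward implication of \autoref{propappu} to $B$ yields that $B$ has Property~$(\mathrm{T}_{k})$, completing the proof. There is no real obstacle here — all the hard work has already been carried out in the previous results (\autoref{lemheru}, \autoref{propequstaf}, and \autoref{propappu}); the argument simply assembles them after checking the straightforward fact that hereditary subalgebras of simple C*-algebras are simple and that $q_{j}Bq_{j}=q_{j}Aq_{j}$.
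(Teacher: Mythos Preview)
Your proof is correct and follows essentially the same approach as the paper: both use hereditariness to identify $q_{j}Bq_{j}$ with $q_{j}Aq_{j}$, invoke \autoref{lemheru} and \autoref{propequstaf} to obtain $\mathrm{TR}(q_{j}Bq_{j})\leq k$, and then apply \autoref{propappu}. Your version is slightly more explicit in verifying that $B$ is simple and that the corners coincide, but the argument is the same.
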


\begin{proof}
The forward implication follows from Definition~\ref{deftk}.
For the backward implication
let $A$ be a simple C*-algebra with Property~$(\mathrm{T}_{0})$ and let $B$ be 
a hereditary C*-subalgebra of $A$ which contains an approximate identity consisting of projections 
$(p_{i})_{i \in I}$. For each $i\in I$ we have $p_{i}Bp_{i}=p_{i}Ap_{i}$
which has Property~$(\mathrm{T}_{0})$ by Lemma~\ref{lemheru}. Therefore,
Proposition~\ref{propequstaf} and Proposition~\ref{propappu} imply that $B$ has Property~$(\mathrm{T}_{0})$.
\end{proof}

\begin{cor}\label{thmstaf}
Let $A$ be a simple C*-algebra. The following are equivalent:
\begin{enumerate}

\item\label{thmstaf_it1}
$A$ has  Property~$(\mathrm{T}_{0})$;
\item\label{thmstaf_it2}
$\overline{xAx}$ has Property~$(\mathrm{T}_{0})$  for all $x\in A_{+}$;
\item\label{thmstaf_it3}
$A$ has real rank zero and $\mathrm{TR}(pAp)=0$ for all projections $p\in A$.

\end{enumerate}
\end{cor}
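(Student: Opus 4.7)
The corollary is essentially a direct consequence of \autoref{thmstafk} (taken with $k=0$) combined with \autoref{thmrrsr}; the content is that the real rank zero hypothesis appearing in \autoref{thmstafk} becomes redundant once one imposes Property~$(\mathrm{T}_{0})$.

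For the equivalence (1)$\Leftrightarrow$(3), I would argue as follows. Given \eqref{thmstaf_it1}, \autoref{thmrrsr} yields that $A$ has real rank zero, so \autoref{thmstafk} applies and gives \eqref{thmstaf_it3}. Conversely, \eqref{thmstaf_it3} explicitly contains real rank zero, so \autoref{thmstafk} (with $k=0$) gives \eqref{thmstaf_it1}. The implication (1)$\Rightarrow$(2) is handled analogously: from \eqref{thmstaf_it1} and \autoref{thmrrsr} we obtain real rank zero for $A$, at which point \autoref{thmstafk} yields the statement \eqref{thmstaf_it2} about hereditary subalgebras $\overline{xAx}$.

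For the converse (2)$\Rightarrow$(1), I would invoke \autoref{proploc}. Given a finite subset $F \subseteq A$ and $\varepsilon>0$, I set $x=\sum_{a\in F}(a^{*}a+aa^{*})\in A_{+}$; since $a^{*}a$ and $aa^{*}$ are both dominated by $x$, a standard argument with the support projection of $x$ in $A^{**}$ shows that $a\in \overline{xAx}$ for every $a\in F$. The hereditary subalgebra $B=\overline{xAx}$ is simple (hereditary in a simple C*-algebra) and has Property~$(\mathrm{T}_{0})$ by the hypothesis \eqref{thmstaf_it2}, and it contains $F$ exactly, so in particular $F\subseteq_{\varepsilon} B$. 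Thus \autoref{proploc} delivers Property~$(\mathrm{T}_{0})$ for $A$. There is no serious obstacle in this corollary; the only point requiring a moment of care is the containment $F\subseteq \overline{xAx}$, which is the standard fact that $a\in A$ lies in the hereditary subalgebra generated by $a^{*}a+aa^{*}$.
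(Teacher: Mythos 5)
Your proof is correct, and for the implications $(1)\Leftrightarrow(3)$ and $(1)\Rightarrow(2)$ it follows exactly the paper's route: the paper's entire proof is the one-line observation that each of the three conditions forces $A$ to have real rank zero (via \autoref{thmrrsr}), after which \autoref{thmstafk} with $k=0$ gives all the equivalences. Where you diverge is $(2)\Rightarrow(1)$: the paper also runs this through \autoref{thmstafk}, which requires first extracting real rank zero of $A$ from condition (2) --- a step the paper leaves implicit and which itself needs the observation that every self-adjoint element of $A$ sits inside some singly generated hereditary subalgebra $\overline{xAx}$ (which then has real rank zero by \autoref{thmrrsr}); internally, \autoref{thmstafk} then passes through condition (3) and \autoref{propappu}, so real rank zero is genuinely used there to produce an approximate identity of projections. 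You instead go directly through the local criterion \autoref{proploc}: given a finite set $F$, you place it inside the simple hereditary subalgebra $\overline{xAx}$ with $x=\sum_{a\in F}(a^{*}a+aa^{*})$, which has Property~$(\mathrm{T}_{0})$ by hypothesis, and conclude. This is a clean and valid shortcut that avoids any appeal to real rank zero for that direction; the containment $F\subseteq\overline{xAx}$ you flag is indeed the standard fact that $a\in\overline{(aa^{*})A(a^{*}a)}$ together with $a^{*}a,aa^{*}\leq x$. Both arguments are sound; yours is marginally more self-contained for $(2)\Rightarrow(1)$, while the paper's is shorter given that \autoref{thmstafk} has already been proved.
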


\begin{proof}
\eqref{thmstaf_it1}$\Rightarrow$\eqref{thmstaf_it2}: This follows from
Proposition~\ref{propher}
and the fact that $\overline{xAx}$ has real rank zero
(by Theorem~\ref{thmrrsr}).
\eqref{thmstaf_it2}$\Rightarrow$\eqref{thmstaf_it3}: 
Suppose that \eqref{thmstaf_it2} holds.
Then by Theorem~\ref{thmrrsr}, $\overline{xAx}$ has 
real rank zero,  for all $x\in A_{+}$.
It follows that $A$ has real rank zero. The second part
of \eqref{thmstaf_it3}  follows 
from Proposition~\ref{propstaf}.  
Finally, the implication 
\eqref{thmstaf_it3}$\Rightarrow$\eqref{thmstaf_it1}
follows from Proposition~\ref{propappu}.
\end{proof}

\begin{lma}\label{lemmatrixk}
Let $A$ be a simple  C*-algebra with Property~$(\mathrm{T}_{0})$.
Then $M_{n}(A)$ has Property~$(\mathrm{T}_{0})$ for all $n\in \mathbb{N}$.
\end{lma}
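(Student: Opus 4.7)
The plan is to apply Proposition~\ref{propappu} in both directions, thereby reducing the problem to a matrix-amplification statement in the simple unital setting. First I would invoke Proposition~\ref{propappu} to obtain an approximate identity $(p_{i})_{i\in I}$ of projections in $A$ with $\mathrm{TR}(p_{i}Ap_{i})\leq k$ for every $i\in I$. Each $B_{i}:=p_{i}Ap_{i}$ is a unital simple C*-algebra, since $A$ is simple and $p_{i}$ is a nonzero projection.

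Next I would lift this family to $M_{n}(A)$ by setting $q_{i}=\mathrm{diag}(p_{i},p_{i},\ldots,p_{i})\in M_{n}(A)$ for each $i\in I$. Each $q_{i}$ is a projection, and $(q_{i})_{i\in I}$ is an approximate identity for $M_{n}(A)$: indeed, for $a=(a_{jk})\in M_{n}(A)$ the matrix $q_{i}aq_{i}$ has $(j,k)$-entry $p_{i}a_{jk}p_{i}$, which converges to $a_{jk}$, so $q_{i}aq_{i}\to a$ (and the one-sided versions follow the same way). Moreover the obvious identification gives a $*$-isomorphism
\[
q_{i}M_{n}(A)q_{i}\;\cong\;M_{n}(p_{i}Ap_{i})=M_{n}(B_{i}).
\]

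The key step is to verify that $\mathrm{TR}(M_{n}(B_{i}))\leq k$. Since $B_{i}$ is simple unital with $\mathrm{TR}(B_{i})\leq k$, this is Lin's standard matrix-amplification result for tracial rank in the simple unital setting: the algebras $B_{i}$ and $M_{n}(B_{i})$ are strongly Morita equivalent (via the rank-one corner embedding, which is a full corner), and tracial rank is preserved in this direction for simple unital C*-algebras (see \cite[\S 5]{Lin01}). Alternatively, one can verify it directly from Definition~\ref{deftr}: given a finite subset $F\subseteq M_{n}(B_{i})$ one applies the definition in $B_{i}$ to the collection of matrix entries of $F$, obtains an $E\in\mathcal{I}^{(k)}$ inside $B_{i}$ with unit $p$, and observes that $M_{n}(E)\in\mathcal{I}^{(k)}$ (since the class $\mathcal{I}^{(k)}$ is closed under tensoring with $M_{n}$) has unit $\mathrm{diag}(p,\ldots,p)$ and inherits the required commutation, approximation, and subequivalence estimates entrywise.

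Finally, applying the reverse direction of Proposition~\ref{propappu} to the simple C*-algebra $M_{n}(A)$ together with the approximate identity $(q_{i})_{i\in I}$ of projections satisfying $\mathrm{TR}(q_{i}M_{n}(A)q_{i})\leq k$ yields that $M_{n}(A)$ has Property~$(\mathrm{T}_{k})$. The main obstacle in this plan is the matrix-amplification fact $\mathrm{TR}(M_{n}(B))\leq k\Rightarrow \mathrm{TR}(M_{n}(B))\leq k$ in the simple unital case; this is folklore but must be cited from or verified via Lin's foundational work, as it is not yet available elsewhere in the appendix (and we cannot appeal to Corollary~\ref{cormorita} here, since that Morita invariance result itself relies on the present lemma).
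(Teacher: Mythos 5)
Your proposal is correct and follows essentially the same route as the paper: pass to the approximate identity of projections $(p_i)$ with $\mathrm{TR}(p_iAp_i)\leq k$, amplify to $q_i=\mathrm{diag}(p_i,\ldots,p_i)$, identify $q_iM_n(A)q_i\cong M_n(p_iAp_i)$, invoke Lin's matrix-amplification result for unital tracial rank (the paper cites \cite[Theorem~5.8]{Lin01}), and conclude via \autoref{propappu}. Your concern about the key external ingredient is well placed and resolved exactly as you suggest, by citation to Lin; the only blemish is the typo in your final sentence, where the implication should read $\mathrm{TR}(B)\leq k\Rightarrow\mathrm{TR}(M_n(B))\leq k$.
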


\begin{proof}
By Definition~\ref{deftk}, $A$ has 
an approximate identity  $(p_{i})_{i \in I}$  consisting of projections. 
Put $q_{i}=\mathrm{diag}(p_{i},\ldots,p_{i})\in M_{n}(A)$. 
 Then $(q_{i})_{i \in I}$ is
an approximate identity consisting  of projections 
 for $M_{n}(A)$.  Lemma~\ref{lemheru} and Proposition~\ref{propequstaf} yield that
 $\mathrm{TR}(p_{i}Ap_{i})=0$.
 Thus $q_{i}M_{n}(A)q_{i}=M_{n}(p_{i}Ap_{i})$ has tracial  rank zero,
 by \cite[Theorem~5.8]{Lin01}. Hence, Proposition~\ref{propappu} implies that  
 $M_{n}(A)$ has Property~$(\mathrm{T}_{0})$.
\end{proof}






Now, we are in a position to prove Theorem~\ref{main.trk}.

\begin{proof}[Proof of Theorem~\ref{main.trk}]
 The backward implication  follows from 
Proposition~\ref{propstaf}. 
For the other direction, let $A$ be a simple C*-algebra with $\mathrm{TR}(A)=0$.
We may assume that $A$ is nonunital since the unital case follows from
Proposition~\ref{propequstaf}.
As $\mathrm{TR}(A)=0$, \cite[Corollary~5.7]{Lin01} implies that
$\mathrm{TR}_{w}(A)=0$ (recall that, by definition, $\mathrm{TR}(A)=\mathrm{TR}(A^{\sim})$ and 
$\mathrm{TR}_{w}(A)=\mathrm{TR}_{w}(A^{\sim})$).
Thus, $A$ is TAF in the sense of \cite{Lin01c}.
Now by \cite[Corollary~2.8]{Lin01c}, $A$ has an 
approximate identity $(p_{i})_{i \in I}$ (not necessarily increasing)  consisting of projections.
(Note that the separability assumption is unnecessary in \cite[Corollary~2.8]{Lin01c}.)
Then
 it follows from \cite[Theorem~5.3]{Lin01} that $\mathrm{TR}(p_{i}Ap_{i})=0$
for all $i\in I$.
 Hence, Proposition~\ref{propappu} implies that $A$ has Property~$(\mathrm{T}_{0})$.
\end{proof}
 
 \begin{rem}\label{rmk_tr0tk}
 In view of Definition~\ref{deftk},
  Theorem~\ref{main.trk}
 says that a simple   C*-algebra $A$ has tracial
 rank zero if and only if it has  Property~$(\mathrm{T}_{0})$.
 \end{rem}
Theorem~\ref{main.trk} enables us to prove some permanence properties of
 simple C*-algebras of tracial rank zero which are not necessarily $\sigma$-unital.

 
  \begin{cor}[compare with \cite{Lin01}, Proposition~4.8]\label{corlimt0}
 Let $A$ be a simple C*-algebra which is an inductive limit of simple C*-algebras of 
 tracial   rank zero. Then $A$ has tracial rank zero.
 \end{cor}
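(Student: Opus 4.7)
The plan is to reduce the corollary to \autoref{proploc}, which states that local Property~$(\mathrm{T}_{k})$ by simple C*-subalgebras implies Property~$(\mathrm{T}_{k})$, combined with the characterization of tracial rank zero given in \autoref{main.trk} (equivalently \autoref{rmk_tr0tk}).

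First I would set up notation: write $A = \varinjlim (A_{i}, \phi_{j,i})$ where each $A_{i}$ is a simple C*-algebra with $\mathrm{TR}(A_{i})=0$, and let $\phi_{i} \colon A_{i} \to A$ denote the canonical maps. By \autoref{main.trk}(i), each $A_{i}$ has Property~$(\mathrm{T}_{0})$. Next, given any finite subset $F \subseteq A$ and any $\varepsilon > 0$, the definition of the inductive limit yields some index $i$ and a finite subset $F' \subseteq A_{i}$ such that $F \subseteq_{\varepsilon} \phi_{i}(F')$. Because $A_{i}$ is simple, the $*$-homomorphism $\phi_{i}$ is either zero or injective; the former can be ruled out (by choosing $i$ large enough to ensure that the approximation is nontrivial, which is possible whenever $F \not\subseteq_{\varepsilon}\{0\}$, and the statement is vacuous otherwise), so $\phi_{i}(A_{i}) \cong A_{i}$ is a simple C*-subalgebra of $A$ with Property~$(\mathrm{T}_{0})$ and $F \subseteq_{\varepsilon} \phi_{i}(A_{i})$.

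Applying \autoref{proploc} with $k=0$ to this local approximation, we conclude that $A$ itself has Property~$(\mathrm{T}_{0})$. The final step is to invoke \autoref{main.trk}(i) in the opposite direction to deduce $\mathrm{TR}(A)=0$.

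The only subtle point is the simplicity hypothesis in \autoref{proploc}: we must produce simple C*-subalgebras, not merely C*-subalgebras with Property~$(\mathrm{T}_{0})$. The key observation that removes this obstacle is precisely that a $*$-homomorphism out of a simple C*-algebra is either zero or isometric, so the image $\phi_{i}(A_{i})$ inherits both simplicity and Property~$(\mathrm{T}_{0})$ from $A_{i}$. This is really the only place where the hypothesis that each $A_{i}$ is simple (rather than only of tracial rank zero) is used, and it is what makes the nonunital inductive-limit argument go through without having to pass to unitizations, which is the whole point of the unified framework developed in this appendix.
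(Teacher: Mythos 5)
Your proof is correct and takes essentially the same route as the paper, whose own proof just cites \autoref{main.trk} together with the remark following \autoref{proploc} (that Property~$(\mathrm{T}_{0})$ passes to inductive limits of simple C*-algebras); you merely unpack that one-line argument, including the correct observation that $\phi_{i}(A_{i})\cong A_{i}$ is a simple subalgebra with Property~$(\mathrm{T}_{0})$ because a $*$-homomorphism out of a simple C*-algebra is zero or injective. The only tiny slip is the degenerate case $F\subseteq_{\varepsilon}\{0\}$: the hypothesis of \autoref{proploc} still requires exhibiting \emph{some} simple subalgebra with Property~$(\mathrm{T}_{0})$ containing $F$ up to $\varepsilon$, but any nonzero $\phi_{j}(A_{j})$ (which exists since $A\neq 0$) serves, so nothing is lost.
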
  
 
 \begin{proof}
 This follows from Theorem~\ref{main.trk}  and the remark after
 Proposition~\ref{proploc}.
  \end{proof}
  
  The following corollary was proved by Lin in the unital case.
  More precisely, Part~\eqref{corrrsr_it1} in the simple unital case follows from \cite[Theorem~7.1]{Lin01}
  and \cite[Theorem~3.6.11]{Lin.book}. Part~\eqref{corrrsr_it2} is proved in
  \cite[Theorem~5.8]{Lin01} in the unital not necessarily simple  case.
  Part~\eqref{corrrsr_it3} in the case of a unital hereditary subalgebra follows from 
  \cite[Theorem~5.3]{Lin01}. We deal with the nonunital case.
  
   \begin{cor}\label{corrrsr}
 Let $A$ be a simple   C*-algebra with tracial rank zero.
 Then the following hold:
\begin{enumerate}
\item\label{corrrsr_it1}
$A$  has real rank zero and stable rank one;
\item\label{corrrsr_it2}
$\mathrm{TR}(M_{n}(A))=0$ for all $n\in \mathbb{N}$;
\item\label{corrrsr_it3}
if $B$ is a hereditary
C*-subalgebra of $A$ then  $\mathrm{TR}(B)=0$.
\end{enumerate} 
 \end{cor}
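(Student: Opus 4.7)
The plan is to reduce each of the three claims to the appendix's machinery via the equivalence, established in \autoref{main.trk}, between $\mathrm{TR}(A)=0$ and Property~$(\mathrm{T}_0)$ (see also \autoref{rmk_tr0tk}).

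For Part~\eqref{corrrsr_it1}, I would simply invoke \autoref{main.trk} to conclude that $A$ has Property~$(\mathrm{T}_0)$, and then apply \autoref{thmrrsr} which directly gives real rank zero and stable rank one for any simple C*-algebra with Property~$(\mathrm{T}_0)$.

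For Part~\eqref{corrrsr_it2}, the argument is likewise immediate: by \autoref{main.trk}, $A$ has Property~$(\mathrm{T}_0)$; by \autoref{lemmatrixk}, $M_n(A)$ has Property~$(\mathrm{T}_0)$; and one more appeal to \autoref{main.trk} (noting that $M_n(A)$ is again simple) yields $\mathrm{TR}(M_n(A))=0$.

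Part~\eqref{corrrsr_it3} is the step that needs a little more care, and it is the only place where Part~\eqref{corrrsr_it1} is genuinely used. The target result in the appendix is \autoref{propher}, which says that a hereditary C*-subalgebra of a simple C*-algebra with Property~$(\mathrm{T}_k)$ has Property~$(\mathrm{T}_k)$ \emph{provided} it already possesses an approximate identity consisting of projections. So the key point to verify is that $B$ has such an approximate identity. By Part~\eqref{corrrsr_it1}, $A$ has real rank zero; since real rank zero passes to hereditary C*-subalgebras (\cite[Corollary~2.8]{BP91} / \cite[Theorem~3.2.2]{Lin.book}), $B$ has real rank zero as well, and it is standard that a C*-algebra of real rank zero has an approximate identity (not necessarily increasing) consisting of projections in every hereditary subalgebra (in fact, real rank zero is characterized by this property). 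Now \autoref{propher} applies to give $B$ Property~$(\mathrm{T}_0)$, and a final appeal to \autoref{main.trk} (observing that $B$ is simple as a hereditary C*-subalgebra of a simple C*-algebra) yields $\mathrm{TR}(B)=0$.

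The main (and only mild) obstacle is locating the right characterization of real rank zero to extract an approximate identity of projections for $B$; everything else is a direct application of results already proved in the appendix. No new estimates are required.
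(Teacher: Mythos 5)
Your proposal is correct and follows essentially the same route as the paper: Part~\eqref{corrrsr_it1} from \autoref{main.trk} plus \autoref{thmrrsr}, Part~\eqref{corrrsr_it2} from \autoref{lemmatrixk} plus \autoref{main.trk}, and Part~\eqref{corrrsr_it3} from Part~\eqref{corrrsr_it1}, \autoref{propher}, and \autoref{main.trk}. The paper's proof is just a list of citations, and your expansion of the one nontrivial point --- that real rank zero passes to the hereditary subalgebra $B$ and supplies the approximate identity of projections needed to apply \autoref{propher} --- is exactly the role Part~\eqref{corrrsr_it1} is playing there.
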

 
 \begin{proof}
Part \eqref{corrrsr_it1} follows from Theorems~\ref{thmrrsr} and 
\ref{main.trk}. Also, 
Part~\eqref{corrrsr_it2} follows from Lemma~\ref{lemmatrixk} and Theorem~\ref{main.trk}.
Finally,
Part~\eqref{corrrsr_it3} follows from
Part~\eqref{corrrsr_it1}, Proposition~\ref{propher}, and Theorem~\ref{main.trk}.
  \end{proof}

\subsection{Morita equivalence}
 In  this subsection, we   prove that the class of simple C*-algebras with tracial rank zero 
 is  closed under Morita equivalence (note that we do not assume
 any separability condition).
 This result was used in the proof of Theorem~\ref{thmcross}.

\begin{prop}\label{propkk}
Let $A$ be a simple C*-algebra. Then 
 $A$ has Property~$(\mathrm{T}_{0})$ if and only if 
$A\otimes \mathcal{K}$ has Property~$(\mathrm{T}_{0})$. 
\end{prop}

\begin{proof}
The forward implication follows from  Lemma~\ref{lemmatrixk}, the remark
after Proposition~\ref{proploc},
and the fact that $A\otimes \mathcal{K}$ is isomorphic to an inductive limit 
$\varinjlim M_{n}(A)$.
The backward implication follows from
Corollary~\ref{corrrsr}\eqref{corrrsr_it3} and Theorem~\ref{main.trk}.
\end{proof}
 
 \begin{thm}\label{cormorita}
 Let $A$ be a nonzero simple   C*-algebra. 
 The following statement are equivalent:
 \begin{enumerate}
 \item\label{cormorita_it1}
 $\mathrm{TR}(A)=0$;
 \item\label{cormorita_it2}
 $A$ is Morita equivalent to a simple unital C*-algebra $B$ with
$\mathrm{TR}(B)=0$;
 \item\label{cormorita_it3}
 $\mathrm{TR}(pAp)=0$
for some (any) nonzero projection $p\in A$.
 \end{enumerate}
 In particular, the class of simple
  C*algebras with tracial rank zero
 is closed under Morita equivalence. 
 \end{thm}
 
 \begin{proof}
The implication \eqref{cormorita_it1}$\Rightarrow$\eqref{cormorita_it3} 
follows from Part~\eqref{corrrsr_it3} of Corollary~\ref{corrrsr}. Also,  
\eqref{cormorita_it3}$\Rightarrow$\eqref{cormorita_it2} 
is obvious.
For \eqref{cormorita_it2}$\Rightarrow$\eqref{cormorita_it1}, 
let $B$ be a simple  {unital}   C*-algebra 
with $\mathrm{TR}(B)=0$ such that
$B$ is Morita equivalent to $A$. 
By Part~\eqref{corrrsr_it1} of Corollary~\ref{corrrsr} we have
$\mathrm{RR}(B)=0$.
 By
\cite[Theorem~3.8]{BP91}, having real rank zero is preserved under Morita equivalence,
hence we get $\mathrm{RR}(A)=0$. 
 In particular,
$A$ has
an approximate identity (not necessarily increasing) consisting of projections 
$(p_{i})_{i \in I}$.
For each $i \in I$, the simple unital C*-algebra $p_{i}Ap_{i}$ is Morita equivalent 
to $A$, and so it is Morita equivalent to $B$.
Since both  $p_{i}Ap_{i}$ and $B$ are unital, they are stably isomorphic (by \cite{BGR}).
Thus by Proposition~\ref{propkk},  $p_{i}Ap_{i}$ has Property~$(\mathrm{T}_{0})$.
Hence, Propositions~\ref{propequstaf} and \ref{propappu} imply
 that $A$ has Property~$(\mathrm{T}_{0})$.
 Now,  
Theorem~\ref{main.trk} yields that $\mathrm{TR}(A)=0$.
  
 The equivalence of Parts~\eqref{cormorita_it1}
and \eqref{cormorita_it2}  implies that 
the class of simple
  C*algebras with tracial rank zero
 is closed under Morita equivalence. 
 \end{proof}
 
 As an application of the preceding
theorem,
 we give the following result. 

\begin{cor}\label{cor_compact}
Let $\alpha \colon G \rightarrow \mathrm{Aut}(A)$ 
be an action of a second countable 
 compact group 
$G$ on a simple separable unital
C*-algebra $A$ with tracial rank zero. 
Suppose that $\alpha$ has the Rokhlin property
in the sense of \cite{Gar14}. Then the 
crossed product 
$A \rtimes _{\alpha} G$ is a simple  
C*-algebra with tracial rank zero.
\end{cor}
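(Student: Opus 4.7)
The plan is to break the argument into three pieces: simplicity of $A \rtimes_\alpha G$, Morita equivalence between $A \rtimes_\alpha G$ and the fixed point algebra $A^\alpha$, and tracial rank zero of $A^\alpha$; then invoke \autoref{cormorita} (Morita invariance of tracial rank zero) to conclude $\mathrm{TR}(A \rtimes_\alpha G) = 0$.

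First, Gardella's Rokhlin property for compact group actions implies pointwise outerness, so by Kishimoto's theorem \cite[Theorem~3.1]{Kishimoto} and simplicity of $A$ we obtain that $A \rtimes_\alpha G$ is simple. Second, since $A$ is unital and $G$ is compact, $A^\alpha$ is unital (it contains $1_A$), and by Rosenberg's theorem \cite{Rosenberg} $A^\alpha$ is isomorphic to a corner of $A \rtimes_\alpha G$; this corner must be full by the simplicity just established, so $A^\alpha$ is Morita equivalent to $A \rtimes_\alpha G$ (compare with the argument of \autoref{Moritaweak}).

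The crux is to show $\mathrm{TR}(A^\alpha) = 0$. I would verify the local criterion of \autoref{thmtr}: given a finite set $F \subseteq A^\alpha$, a nonzero positive element $x \in A^\alpha$, and $\varepsilon > 0$, use that $\mathrm{TR}(A) = 0$ to produce a finite-dimensional C*-subalgebra of $A$ with unit $p$ that almost commutes with $F$, almost absorbs $pFp$, and satisfies $1_A - p \precsim_A x$. The Rokhlin property then provides the machinery to average into $A^\alpha$: Gardella's equivariant unital $*$-homomorphism $C(G) \to A_\infty \cap A'$, together with the canonical conditional expectation $E \colon A \to A^\alpha$, allows one to perturb $p$ to an invariant projection $p' \in A^\alpha$ and the finite-dimensional subalgebra to a $G$-invariant finite-dimensional subalgebra of $A^\alpha$ with unit $p'$; the perturbation is controlled via \autoref{lemperturb}. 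The main obstacle is transferring the Cuntz subequivalence $1_A - p \precsim_A x$ to $1_A - p' \precsim_{A^\alpha} x$, since Cuntz subequivalence does not descend to the fixed-point algebra automatically. This step requires invoking the Rokhlin data carefully to produce the necessary witness in $A^\alpha$, in the spirit of the proof of \autoref{wtr=tr}, where one passes first to the sequence algebra $A_\infty \cap A'$ via the Rokhlin tower, implements the subequivalence there, and then lifts back with the perturbation lemma.

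Once $\mathrm{TR}(A^\alpha) = 0$ is established, \autoref{cormorita} combined with the Morita equivalence $A^\alpha \sim_M A \rtimes_\alpha G$ yields $\mathrm{TR}(A \rtimes_\alpha G) = 0$, completing the proof.
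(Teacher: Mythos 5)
Your overall skeleton is the same as the paper's: establish that the fixed point algebra $A^{\alpha}$ is simple with tracial rank zero, establish that $A^{\alpha}$ is Morita equivalent to $A \rtimes_{\alpha} G$, and then transport tracial rank zero to the (nonunital) crossed product via \autoref{cormorita}. That last step is the only new content the corollary is meant to supply; the paper gets everything else by citation. Specifically, the paper quotes \cite[Theorem~4.5]{Gar14} for the statement that $A^{\alpha}$ is simple with tracial rank zero, and \cite[Proposition~2.7]{Gar14} for the Morita equivalence between $A^{\alpha}$ and $A \rtimes_{\alpha} G$, so its proof is three lines long.

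The problem with your version is that you undertake to reprove Gardella's Theorem~4.5 and do not actually do it. Your third paragraph is the entire technical content of that theorem, and at its crux you write that the main obstacle is transferring $1_A - p \precsim_{A} x$ to a subequivalence witnessed inside $A^{\alpha}$, and that this ``requires invoking the Rokhlin data carefully.'' That is not a proof step; it is a statement of the difficulty. The averaging of $p$ and of the finite-dimensional subalgebra into $A^{\alpha}$ via the equivariant map $C(G) \to A_{\infty} \cap A'$ and the production of a Cuntz subequivalence witness in the fixed point algebra are exactly what Gardella's argument accomplishes, and without carrying them out (or simply citing the theorem) your proof has a genuine gap. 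If you are allowed to cite \cite[Theorem~4.5]{Gar14}, the whole paragraph should be replaced by that citation.

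Two smaller points. First, Rosenberg's result \cite{Rosenberg} concerns finite group actions; for a general compact group the identification of $A^{\alpha}$ with a corner of $A \rtimes_{\alpha} G$ and the resulting Morita equivalence is \cite[Proposition~2.7]{Gar14}, which is what the paper uses. Second, your route to simplicity of the crossed product via pointwise outerness and Kishimoto's theorem requires the unproved assertion that the Rokhlin property for compact group actions implies pointwise outerness; this detour is unnecessary, since simplicity of $A \rtimes_{\alpha} G$ already follows from simplicity of $A^{\alpha}$ together with the Morita equivalence (or, as in the paper, from \autoref{cormorita} itself, which delivers simplicity and tracial rank zero simultaneously).
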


 The proof is mainly based on  
 \cite[Theorem~4.5]{Gar14}  in which a similar
 result is obtained for the fixed point algebra.
 Note that the fixed point algebra is  unital,
 and so the original definition
 of tracial rank for unital C*-algebras can be applied. However, when $G$ is infinite,
 the crossed product is never unital.
 The Morita invariance of tracial rank zero
 for simple C*-algebras enables us to deal with 
 this difficulty.

\begin{proof}[Proof of Corollary~\ref{cor_compact}]
By  \cite[Theorem~4.5]{Gar14}, the fixed point
algebra $A^{\alpha}$ is a simple C*-algebra
with tracial rank zero. Also, by 
 \cite[Proposition~2.7]{Gar14}, the 
 fixed point algebra and the crossed product
 are Morita equivalent. Thus,
 Theorem~\ref{cormorita} implies that
 the 
crossed product 
$A \rtimes _{\alpha} G$ is also simple  
with tracial rank zero.
\end{proof}



\section*{Acknowledgments}
The authors would like to thank  N. Christopher Phillips for helpful discussions and valuable suggestions 
during the first author's visit at the CRM and Siegfried Echterhoff for helpful comments about Morita equivalence.
The second author would like to thank Huaxin Lin for some discussions via email.
The authors are grateful to Eusebio Gardella for reading the first draft of the paper
and giving useful comments.
The authors would like to thank the anonymous referee for valuable comments.
Part of this work was done during a visit of the first author at the University of M\"{u}nster
and her visit was supported by the Deutsche Forschungsgemeinschaft (SFB 878). The first author was supported by a grant from IPM.
The second author was supported by a grant from INSF (no.~98009270).

\bibliographystyle{amsplain}

\begin{thebibliography}{99}

\bibitem{AGJP17}
{\sc M. Amini, N. Golestani, S. Jamali, N. C. Phillips}, 
\emph{Simple tracially $\mathcal{Z}$-absorbing C*-algebras},
preprint.

\bibitem{APT11}
{\sc P.~Ara, F.~Perera,  A.~S.\  Toms},
{\emph{K-theory for operator algebras. Classification of C*-algebras}},
pages 1--71 in:
{\emph{Aspects of Operator Algebras and Applications}},
P.~Ara, F~Lled\'{o}, and F.~Perera (eds.),
Contemporary Mathematics vol.~534,
Amer.\  Math.\  Soc., Providence RI, 2011.

\bibitem{Archey}{\sc D. Archey},
{\it Crossed product C*-algebras by finite group actions with the tracial Rokhlin property,}
 Rocky Mountain J. Math. {\bf 41} (2011), no.~6, 1755--1768.

\bibitem{ABP17}{\sc D. Archey, J. Buck, N. C. Phillips},
{\it Centrally large subalgberas and tracial $\mathcal{Z}$-absorption,}
 Int. Math. Res. Not. IMRN 2018, no. 6, 1857--1877. 



 
\bibitem{Bla90}
{\sc B. Blackadar}, \emph{Symmetries of  the CAR algebras}, 
 Ann. of Math. (2) \textbf{131} (1990), no. 3, 589--623.
 
 
\bibitem{Black}
{\sc B. Blackadar}, \emph{The stable rank of full corners in C*-algebras},  
 Proc.  Amer. Math. Soc. \textbf{132} (2004), no. 10, 2945--2950.
 
 \bibitem{Bl06}
{\sc B. Blackadar}, \emph{Operator algebras: theory of C*-algebras and von Neumann algebras}, 
 Encyclopaedia of Mathematical Sciences, Vol.~122, 
 Springer-Verlag, Berlin, 2006.
 
\bibitem{BSKR93}
{\sc  O. Bratteli, E. St{\o}rmer, A. Kishimoto, M. R{\o}rdam},
\emph{The crossed product of a UHF~algebra by a shift}, Ergodic Theory Dynam. Systems 
\textbf{13} (1993), no.~4, 615--626. 
 
\bibitem{Br77}
{\sc L. G. Brown}, \emph{Stable isomorphism of hereditary subalgebras of C*-algebras}, 
Pacific J. Math.  \textbf{71}  (1977), no. 2, 335--348.

\bibitem{BGR}
{\sc L. G. Brown, P. Green, M. Rieffel}, \emph{Stable isomorphism and strong Morita equivalence  of C*-algebras}, 
Pacific J. Math.  \textbf{71}  (1977), no. 2, 349--368.

\bibitem{BP91}
{\sc  L. G. Brown, G. K. Pedersen}, \emph{C*-algebras of real rank zero},
J.~ Funct. Anal. \textbf{99} (1991), no.~1,  131--149.
 

\bibitem{DE02}
{\sc M.~Dadarlat, S.~Eilers}, \emph{On the
classification of nuclear C*-algebras}, Proc. London Math. Soc. \textbf{85} (2002), no.~3,
 168--210.
 
\bibitem{Echterhoff-Phillips}
{\sc  S. Echterhoff,  W. L\"{u}ck, N. C. Phillips, S. Walters},
{\it The structure of crossed products of irrational rotation algebras by finite subgroups of  $SL_{2}(\mathbb{Z})$,}
 J. Reine Angew. Math. \textbf{639} (2010), 173--221.  




\bibitem{Gar14} 
{\sc E. Gardella}, 
\emph{Crossed product of compact group actions with the Rokhlin property},
J. Noncommut. Geom. {\bf 11}(2017) 1593-1626.

\bibitem{GH20} 
{\sc E. Gardella,  I. Hirshberg}, 
\emph{Strongly outer actions of amenable groups on 
$\mathcal{Z}$-stable C*-algebras}, preprint 2020
(arXiv: 1811.00447v3 [math.OA]).

\bibitem{GHS} 
{\sc E. Gardella,  I. Hirshberg,  L. Santiago}, 
\emph{Rokhlin dimension: duality, tracial properties and  crossed products}, 
Ergod. Theory Dyn. Syst.,
Published online by Cambridge University Press: 18 October 2019. 

\bibitem{GS16} 
{\sc E. Gardella, L. Santiago}, 
\emph{Equivariant $*$-homomorphisms, Rokhlin constraints and equivariant UHF-absorption}, 
J. Funct. Anal. \textbf{270} (2016), no. 7, 2543--2590.

\bibitem{Ph17}
{\sc T. Giordano, D. Kerr, N. C. Phillips, A. S. Toms}, \emph{Crossed products of C*-algebras, 
topological dynamics, and classification},
  Advanced Courses in Mathematics. 
 CRM Barcelona. Birkh\"{a}user/Springer, Cham, 2018.


\bibitem{G19}
{\sc N. Golestani},   in preparation.




  
  
 \bibitem{HO84}
 {\sc R. Herman, A. Ocneanu,} {\it Stability for integer actions on UHF~C*-algebras,} 
  J. Funct. Anal. {\bf 59} (1984), no.~1, 132--144.


 \bibitem{Hirshberg}
 {\sc I. Hirshberg, J. Orovitz,} {\it Tracially $\mathcal{Z}$-absorbing C*-algebras,} 
  J. Funct. Anal. {\bf 265} (2013), no.~5,  765--785.
  

\bibitem{HRW07}
{\sc A. A. Huef, I. Raeburn, D. P. Williams}, 
{\it Properties preserved under Morita equivalence of C*-algebras}, 
Proc.  Amer. Math. Soc. \textbf{135} (2007),  no.~5, 1495--1503.

  \bibitem{IZ04a}
 {\sc M. Izumi,} {\it  Finite group actions on C*-algebras with the Rohlin property, I,} 
   Duke. Math. J. {\bf 122} (2004), no.~2,  233--280.
   
 \bibitem{IZ04b}
 {\sc M. Izumi,} {\it  Finite group actions on C*-algebras with the Rohlin property, II,} 
   Adv. Math. {\bf 184} (2004), no.~1,  119--160.

 \bibitem{Jac13}
 {\sc J. Jacelon}, \emph{A simple, monotracial, stably projectionless C*-algebra},
  J. Lond. Math. Soc. (2) \textbf{87} (2013), no.~2, 365--383.

\bibitem{Je95} 
 {\sc J. A. Jeong}, \emph{Purely infinite simple C*-crossed products},
    Proc.  Amer. Math. Soc. \textbf{123} (1995), no. 10, 3075--3078.
    
\bibitem{JO98} 
 {\sc J. A. Jeong, H. Osaka}, \emph{Extremely rich C*-crossed products and
the cancellation property}, J. Austral. Math. Soc. (Series A) \textbf{64} (1998), 285--301.
   
   
 
\bibitem{KR00} 
{\sc E. Kirchberg, M. R{\o}rdam}, \emph{Non-simple purely infinite C*-algebras},
 Amer. J. Math. \textbf{122} (2000), no.~3, 637--666.
 
 \bibitem{KR02}
 {\sc E. Kirchberg, M. R{\o}rdam}, \emph{Infinite non-simple C*-algebras: 
 absorbing the Cuntz algebra $\mathcal{O}_{\infty}$}, Adv. Math. 
 \textbf{167} (2002), no.~2, 195--264. 
 
 
  
\bibitem{Kishimoto}
{\sc A. Kishimoto,} {\it Outer automorphisms and reduced crossed products of simple C*-algebras,}
 Comm. Math. Phys. {\bf 81}  (1981), no.~3, 429--435.

\bibitem{Ki95}
{\sc A. Kishimoto,}
\emph{The Rohlin property for automorphisms of UHF~algebras},
 J. Reine Angew. Math. \textbf{465} (1995), 183--196. 

\bibitem{Ki96}
{\sc A. Kishimoto,} {\it A Rohlin property for one-parameter automorphism groups},
 Comm. Math. Phys. \textbf{179} (1996), no.~3, 599--622. 


 \bibitem{Lin.book}
 {\sc H. Lin,} {\it An introduction to the classification of amenable C*-algebras,}
World Scientific Publishing Co., Inc., River Edge, NJ, 2001.


\bibitem{Lin01}
{\sc H. Lin}, \emph{The tracial topological rank of C*-algebras}, 
 Proc. London Math. Soc. (3) \textbf{83} (2001), no.~1, 199--234. 
 
\bibitem{Lin01c}
{\sc H. Lin}, \emph{Tracially AF C*-algebras}, 
 Trans.  Amer. Math. Soc. \textbf{353} (2001), no.~2, 693--722.
 
 



 \bibitem{MS12}
{\sc H. Matui,  Y. Sato }, 
\emph{$\mathcal{Z}$-stability of crossed products by strongly outer actions},
 Comm. Math. Phys. \textbf{314} (2012), no.~1, 193--228. 

 \bibitem{Nawata}
{\sc N. Nawata}, \emph{Finite group actions on certain stably projectionless C*-algebras with the Rohlin property}, 
 Trans.  Amer. Math. Soc. \textbf{368} (2016), no.~1, 471--493.
 

\bibitem{ORT11}
{\sc E. Ortega, M.~R{\o}rdam, H. Thiel}, 
\emph{The Cuntz semigroup and comparison of open
projections}, J. Funct. Anal. {\bf 260} (2011), no.~12,  (2011) 3474--3493.

\bibitem{OP06}
{\sc H. Osaka, N. C. Phillips}, 
\emph{Stable and real rank for crossed products by
automorphisms with the tracial Rokhlin property},
Ergod. Theory Dyn. Syst. \textbf{26} (2006), no.~5, 1579--1621.

\bibitem{OP12}
{\sc H. Osaka, N. C. Phillips}, 
\emph{Crossed products by finite group actions with the Rokhlin property},
Math. Z. \textbf{ 270} (2012), no.~1--2, 19--42.


 \bibitem{Ph06}
{\sc N. C. Phillips}, 
\emph{Every simple higher dimensional noncommutative torus is an AT~algebra}, 
preprint 2006 (arXiv: 0609783  [math.OA]).

\bibitem{Ph.free}
{\sc N. C. Phillips}, \emph{Freeness of  actions of finite groups on C*-algebras}, 
Operator structures and dynamical systems,  217--257, Contemp. Math., 503, Amer. Math. Soc., 
Providence, RI, 2009.

\bibitem{Ph11}
{\sc N. C. Phillips}, \emph{The tracial Rokhlin property for actions of finite groups on C*-algebras},
Amer. J. Math. \textbf{133} (2011), no.~3, 581--636.

\bibitem{Ph12}
{\sc N. C. Phillips}, 
\emph{The tracial Rokhlin property is generic}, preprint 2012 (arXiv: 1209.3859 [math.OA]).

\bibitem{Ph14} 
{\sc N. C. Phillips}, \emph{Large subalgebras}, preprint 2014 (arXiv: 1408.5546v1 [math.OA]).

\bibitem{Ph15}
{\sc N. C. Phillips}, \emph{Finite cyclic group actions with the tracial Rokhlin property},
Trans. Amer. Math. Soc. \textbf{367} (2015), no.~8, 5271--5300. 



\bibitem{Ro02}
{\sc M.~R{\o}rdam}, {\emph{Classification of nuclear, simple C*-algebras}},
pages 1--145 of:
M.~R{\o}rdam and E.~St{\o}rmer,
{\emph{Classification of nuclear C*-algebras. Entropy in operator
algebras}}, Encyclopaedia of Mathematical Sciences vol.\  126,
Springer-Verlag, Berlin, 2002.


\bibitem{Rosenberg}
{\sc J. Rosenberg}, \emph{Appendix to O. Bratteli's paper on ``Crossed products of UHF-algebras"},
Duke Math. J. \textbf{46} (1979), no.~10, 25-26.

\bibitem{Sa15} 
{\sc L. Santiago}, \emph{Crossed products by actions of finite groups with the Rokhlin property},
Internat. J. Math. \textbf{26} (2015), no.~7, 1550042, 31 pp.
 

\bibitem{TW07} 
{\sc A. S. Toms, W. Winter}, \emph{Strongly self-absorbing C*-algebras},
 Trans. Amer. Math. Soc. \textbf{359} (2007), no.~8, 3999--4029.

\bibitem{Wang13}
{\sc Q. Wang}, \emph{Tracial Rokhlin property and non-commutative dimensions}, 
Thesis (Ph.D.)-Washington University in St. Louis. 2013. 

\bibitem{Wang18}
{\sc Q. Wang}, \emph{The tracial Rokhlin property for actions of amenable groups on
C*-algebras},  Rocky Mountain J. Math. \textbf{48} (2018), no.~4, 1307--1344.

\bibitem{WZ08} 
{\sc  W. Winter, J. Zacharias}, \emph{Completely positive maps of order zero},
M\"{u}nster J. Math.  \textbf{2} (2009), 311--324.
\end{thebibliography}




\end{document}